\newtheorem{theorem}{Theorem}[section]
\newtheorem{prop}[theorem]{Proposition}
\newtheorem{lemma}[theorem]{Lemma}
\newtheorem{cor}[theorem]{Corollary}
\theoremstyle{definition}
\newtheorem{definition}[theorem]{Definition}
\newtheorem{ack}[theorem]{Acknowledgments}
\newtheorem{observation}[theorem]{Observation}
\newtheorem{construction}[theorem]{Construction}
\theoremstyle{remark}
\newtheorem{remark}[theorem]{Remark}
\newtheorem{example}[theorem]{Example}
\DeclareMathOperator{\Psh}{\sf PShv}
\DeclareMathOperator*{\colim}{\sf colim}
\DeclareMathOperator*{\limit}{\sf lim}
\DeclareMathOperator{\Fun}{\sf Fun}
\DeclareMathOperator{\Map}{\sf Map}
\DeclareMathOperator{\Mapc}{{\sf Map}_{\sf c}}
\DeclareMathOperator{\Gammac}{{\Gamma}_{\!\sf c}}
\DeclareMathOperator{\Space}{\sf Spaces}
\DeclareMathOperator{\sym}{\sf Sym}
\DeclareMathOperator{\Cat}{\sf Cat_\infty}
\DeclareMathOperator{\uno}{\mathbbm{1}}
\DeclareMathOperator{\m}{\sf Mod}
\DeclareMathOperator{\Alg}{\mathsf{Alg}}
\DeclareMathOperator{\op}{\mathsf{op}}
\DeclareMathOperator{\com}{\mathsf{Com}}
\DeclareMathOperator{\Top}{\mathsf{Top}}
\DeclareMathOperator{\Emb}{\mathsf{Emb}}
\DeclareMathOperator{\conf}{\mathsf{Conf}}
\DeclareMathOperator{\spaces}{\mathsf{Spaces}}
\DeclareMathOperator{\mfld}{{\cM}\mathsf{fld}}
\DeclareMathOperator{\dmfld}{\mathsf{Mfld}}
\DeclareMathOperator{\sm}{\mathsf{sm}}
\DeclareMathOperator{\fr}{\sf fr}
\DeclareMathOperator{\Sing}{\mathsf{Sing}}
\DeclareMathOperator{\BTop}{\sf BTop}
\DeclareMathOperator{\BO}{{\mathsf BO}}
\DeclareMathOperator{\Lie}{\sf Lie}
\def\ot{\otimes}
\DeclareMathOperator{\fin}{\sf Fin}
\DeclareMathOperator{\oo}{\infty}
\DeclareMathOperator{\hh}{\sf HC}
\DeclareMathOperator{\free}{\sf Free}
\DeclareMathOperator{\disk}{{\cD}\sf isk}
\DeclareMathOperator{\ddisk}{{\sf Disk}}
\DeclareMathOperator{\bdelta}{\boldsymbol{\Delta}}
\newcommand{\ra}{\rightarrow}
\newcommand{\xra}{\xrightarrow}
\def\cC{\mathcal C}\def\cD{\mathcal D}
\def\cE{\mathcal E}\def\cF{\mathcal F}
\def\cI{\mathcal I}\def\cJ{\mathcal J}\def\cK{\mathcal K}
\def\cM{\mathcal M}\def\cO{\mathcal O}
\def\cS{\mathcal S}
\def\cV{\mathcal V}\def\cX{\mathcal X}
\def\AA{\mathbb A}\def\DD{\mathbb D}
\def\EE{\mathbb E}
\def\RR{\mathbb R}
\def\ZZ{\mathbb Z}
\def\sB{\mathsf B}\def\sC{\mathsf C}\def\sD{\mathsf D}
\def\sH{\mathsf H}
\def\sN{\mathsf N}\def\sO{\mathsf O}
\def\sU{\mathsf U}
\def\bH{\mathbf H}
\begin{document}

\title{Factorization homology of topological manifolds}
\author{David Ayala \& John Francis}
\address{Department of Mathematics\\Montana State University\\Bozeman, MT 59717}
\email{david.ayala@montana.edu}
\address{Department of Mathematics\\Northwestern University\\Evanston, IL 60208}
\email{jnkf@northwestern.edu}
\thanks{DA was partially supported by ERC adv.grant 228082 and by the National Science Foundation under Award 0902639. JF was supported by the National Science Foundation under award 0902974 and 1207758; part of the writing was completed while JF was a visitor at Universit\'e Pierre et Marie Curie in Jussieu as a guest of the Foundation Sciences Math\'ematiques de Paris.}

\begin{abstract} Factorization homology theories of topological manifolds, after Beilinson, Drinfeld and Lurie, are homology-type theories for topological $n$-manifolds whose coefficient systems are $n$-disk algebras or $n$-disk stacks. In this work we prove a precise formulation of this idea, giving an axiomatic characterization of factorization homology with coefficients in $n$-disk algebras in terms of a generalization of the Eilenberg--Steenrod axioms for singular homology. Each such theory gives rise to a kind of topological quantum field theory, for which observables can be defined on general $n$-manifolds and not only closed $n$-manifolds. For $n$-disk algebra coefficients, these field theories are characterized by the condition that global observables are determined by local observables in a strong sense. Our axiomatic point of view has a number of applications. In particular, we give a concise proof of the nonabelian Poincar\'e duality of Salvatore, Segal, and Lurie. We present some essential classes of calculations of factorization homology, such as for free $n$-disk algebras and enveloping algebras of Lie algebras, several of which have a conceptual meaning in terms of Koszul duality.
\end{abstract}

\keywords{Factorization algebras. $\cE_n$-algebras. Topological quantum field theory. Topological chiral homology. Koszul duality. Little $n$-disks operad. $\oo$-Categories. Goodwillie--Weiss manifold calculus.}

\subjclass[2010]{Primary 55N40. Secondary 57R56, 57N35.}

\maketitle

\tableofcontents

\section{Introduction}

Factorization homology takes an algebraic input, either an $n$-disk algebra or more generally a stack over $n$-disk algebras, and outputs a homology-type theory for $n$-dimensional manifolds. In the case where the input coefficients are an $n$-disk algebra, this factorization homology -- the topological chiral homology introduced by Lurie \cite{dag} -- satisfies an analogue of the axioms of Eilenberg and Steenrod. This work proves this statement and some consequences afforded by this point of view.

\smallskip

While the subject of factorization homology is new, at least in name, it has important roots and antecedents. Firstly, it derives from the factorization algebras of Beilinson and Drinfeld \cite{bd}, a profound and elegant algebro-geometric elaboration on the role of configuration space integrals in conformal field theory. Our work is in essence a topological version of theirs, although the topological setting allows for arguments and conclusions ostensibly unavailable in the algebraic geometry.  Secondly, it has an antecedent in the labeled configuration space models of mapping spaces dating to the 1970s; it is closest to the models of Salvatore \cite{salvatore} and Segal \cite{segallocal}, but see also \cite{kallel}, \cite{bodig}, \cite{mcduff}, \cite{may}, and \cite{segal}. Factorization homology thus lies at the broad nexus of Segal's ideas on conformal field theory \cite{segalconformal} and his ideas on mapping spaces articulated in \cite{segal} and \cite{segallocal}.

\smallskip

In keeping with these two directions, we offer two primary motivations for the study of factorization homology. Returning to our first point, factorization homology with coefficients in $n$-disk algebras are homology theories for topological manifolds satisfying a generalization of the Eilenberg--Steenrod axioms for ordinary homology; as such, it generalizes ordinary homology in a way that is only defined on $n$-manifolds and not necessarily on arbitrary topological spaces.
Second, these homology theories define topological quantum field theories. Following the vision of Costello--Gwilliam \cite{kevinowen}, factorization homology with coefficients in $n$-disk stacks offers an a algebraic model for the observables in a general topological quantum field theory. The special case of $n$-disk algebra coefficients corresponds to $n$-dimensional field theories whose global observables are determined by the local observables, as in a perturbative quantum field theory.

\smallskip

We first elaborate on the homology theory motivation, which serves as the main artery running through this work. One can pose the following question: what can a homology theory for topological manifolds be? Singular homology, of course, provides one answer. One might not want it to be the only answer, since singular homology is not specific to manifolds and can be equally well defined for all topological spaces; likewise, it is functorial with respect to all maps of spaces, not just those that are specifically meaningful for manifolds (such as embeddings or submersions). One could thus ask for a homology theory which is specific to manifolds, not defined on all spaces, and which might thereby distinguish manifolds which are homotopic but not homeomorphic and distinguish embeddings that are homotopic but not isotopic through embeddings. That is, one could ask for a homology theory for manifolds that can detect the more refined and interesting aspects of manifold topology. The question then becomes, do such homology theories exist?

\smallskip

One might address this question by first making it more precise, by defining exactly what one means by a homology theory for manifolds; since we know what a homology theory for spaces constitutes, by the Eilenberg--Steenrod axioms, one might simply modify those axioms as little as possible, but so as to work only for manifolds and with the maximum possibility that new theories might arise.

\smallskip

Reformulating the Eilenberg--Steenrod axioms slightly, one can think of an ordinary homology theory $\cF$ as a functor $\Space^{\sf fin}\ra {\sf Ch}$ from the topological category of spaces homotopy equivalent to finite CW complexes to the topological category of projective chain complexes\footnote{There is the standard functor $|\Map_{\sf Ch}\bigl(\sC_\ast (\Delta^\bullet), - \bigr)|\colon {\sf Ch}\to \Top$ to topological spaces given by the Dold--Kan correspondence.
While this functor does not send tensor product to Cartesian product, it does so up to a coherent natural transformation.  
The internal hom-objects of ${\sf Ch}$ thus give a topological enrichment.} 
satisfying two conditions:

\begin{itemize}
\item The canonical morphism $\bigoplus_J \cF(X_\alpha) \ra \cF(\coprod_J X_\alpha)$ is an equivalence for any finite set $J$;
\item Excision: for any diagram of cofibrations of spaces $X' \hookleftarrow X \hookrightarrow X''$, the resulting map of chain complexes
\[\cF(X')\underset{\cF(X)} \oplus\cF(X'') \longrightarrow \cF\Bigl(X'\underset{X}\sqcup X''\Bigr)\] is a quasi-isomorphism.
\end{itemize}

The first condition can be restated as saying that the functor $\cF$ is symmetric monoidal with respect to the disjoint union and direct sum. Let $\bH(\spaces, {\sf Ch}^\oplus)$ stand for the collection of such functors. The Eilenberg--Steenrod axioms for ordinary homology can then be reformulated as follows.

\begin{theorem}[Eilenberg--Steenrod] Evaluation on a point, ${\sf ev}_*:\bH(\spaces, {\sf Ch}^\oplus)\ra {\sf Ch}$, defines an equivalence between homology theories valued in chain complexes with direct sum and chain complexes. The inverse is given by singular homology, the functor assigning to a chain complex $V$ the functor $\sC_*(-,V)$ of singular chains with coefficients in $V$.
\end{theorem}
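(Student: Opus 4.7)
The plan is to show that ${\sf ev}_*$ and $V \mapsto \sC_*(-,V)$ are mutually inverse. One composite, ${\sf ev}_*(\sC_*(-,V)) = \sC_*(*,V) \simeq V$, is tautological, so the content reduces to (a) verifying that $\cF_V := \sC_*(-,V)$ genuinely lies in $\bH(\spaces, {\sf Ch}^\oplus)$, and (b) proving that every $\cF$ in $\bH(\spaces, {\sf Ch}^\oplus)$ is naturally equivalent to $\sC_*(-, \cF(*))$.

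For (a), the disjoint union axiom for $\cF_V$ is immediate from the additivity of singular chains. The excision axiom for a cofibration span $X' \hookleftarrow X \hookrightarrow X''$ is a form of the small-chains theorem: the natural map $\sC_*(X', V) \oplus_{\sC_*(X, V)} \sC_*(X'', V) \to \sC_*(X' \sqcup_X X'', V)$ is a quasi-isomorphism because the cover $\{X', X''\}$ of the pushout admits a barycentric subdivision procedure producing chains supported in one of the two pieces.

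For (b), I would construct a natural transformation $\eta_X: \sC_*(X, \cF(*)) \to \cF(X)$ and verify it is an equivalence by induction on a CW presentation. Every object of $\spaces^{\sf fin}$ is homotopy equivalent to a finite CW complex, and the topological enrichment of $\cF$ forces $\cF$ to be homotopy-invariant, so one may replace $X$ by such a model. The base case is a finite set of points, where $\eta$ is an equivalence by the disjoint union axiom, combined with $\cF(*) = V = \sC_*(*, V)$. The inductive step attaches an $n$-cell via the cofibration pushout $X_k \hookleftarrow S^{n-1} \hookrightarrow D^n$; excision computes $\cF(X_{k+1})$ as $\cF(X_k) \oplus_{\cF(S^{n-1})} \cF(D^n)$, and the analogous formula holds for $\sC_*(-, V)$. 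Homotopy invariance identifies $\cF(D^n)$ with $V$, and a subsidiary induction on a CW structure for $S^{n-1}$ identifies $\cF(S^{n-1})$ with $\sC_*(S^{n-1}, V)$, closing the recursion.

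The principal obstacle is constructing $\eta$ itself in a way that is visibly compatible with the excision diagrams for both $\cF$ and $\sC_*(-,V)$. A conceptually clean route is to present $X$ as the geometric realization of $\Sing(X)$, a simplicial object in finite disjoint unions of points, apply the axioms to express $\cF(X)$ as the realization of the simplicial chain complex $[n] \mapsto \bigoplus_{\Sing(X)_n} V$, and observe that this realization is tautologically $\sC_*(X, V)$. The more elementary route sketched above is cellular: one defines $\eta$ recursively using the universal property of the pushout, matching small-chains excision with the abstract excision axiom. Either way, once homotopy invariance and pushout-preservation are in hand, an equivalence of homology theories is determined by its value on a single point.
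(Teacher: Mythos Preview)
The paper does not prove this statement directly; it is stated as the classical Eilenberg--Steenrod theorem, and later (Example~\ref{ES}) the paper observes that it can be recovered as the specialization of the main Theorem~\ref{homology} to $\cC^\oplus$, using that $\disk_n^{\fr}\alg(\cC^\oplus)\simeq\cC$ (every object is canonically a commutative monoid under $\oplus$) and that $\varinjlim_n \mfld_n^{\fr}\simeq\spaces^{\sf fin}$. Your argument is the standard direct one and is essentially correct; it is elementary and self-contained, which suits a motivational result in the introduction, whereas the paper's route is uniform with its general machinery but presupposes the entire factorization-homology apparatus (handlebody induction, $\ot$-excision for left Kan extensions, etc.).

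One genuine slip: in your ``conceptually clean route,'' $\Sing(X)$ is not a simplicial object in \emph{finite} disjoint unions of points---each $\Sing(X)_n$ is typically uncountable even for $X$ a finite CW complex---so the finite-coproduct axiom does not apply levelwise, and you cannot directly identify $\cF(X)$ with the realization of $[n]\mapsto\bigoplus_{\Sing(X)_n}\cF(\ast)$. Your cellular route avoids this and works as written. If you want to salvage the simplicial approach, replace $\Sing(X)$ by a levelwise-finite simplicial set modeling the homotopy type of $X$; but then you must separately argue that excision forces $\cF$ to preserve the relevant geometric realizations, which is itself an induction of the same flavor as the cellular one.
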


In particular, each homology theory $\cF$ is equivalent to the functor of singular chains with coefficients in the chain complex $\cF(\ast)$, which is the value of $\cF$ on the singleton: 
$
\sC_\ast(-, \cF(\ast)) \simeq\cF.
$
Further, every natural transformation of homology theories $\cF \ra \cF'$ is determined by the map $\cF(\ast)\ra \cF'(\ast)$.

To adapt this definition to manifolds, we make two substitutions:
\begin{enumerate}
\item We replace $\spaces^{\sf fin}$ with $\mfld_n$, the collection of topological $n$-manifolds, not necessarily closed but with suitably finite covers, with embeddings as morphisms.
\item We replace the target ${\sf Ch}^\oplus$ by a general symmetric monoidal $\oo$-category $\cV$.
\end{enumerate}
As so, we define $\bH(\mfld_n,\cV)$ to be the collection of all symmetric monoidal functors from $\mfld_n$ to $\cV$ that satisfy a monoidal version of excision. We arrive at the following analogue of the Eilenberg--Steenrod axioms, for $\cV$ a symmetric monoidal $\oo$-category satisfying a technical condition (Definition \ref{ast}).

\begin{theorem} There is an equivalence between homology theories for topological $n$-manifolds valued in $\cV$ and $n$-disk algebras in $\cV$
\[\xymatrix{
\displaystyle\int: \Alg_{\disk_n}(\cV)\ar@<-.5ex>[r] &\bH(\mfld_n, \cV):{\sf ev}_{\RR^n}~.\ar@<-.5ex>[l]\\}\]
This equivalence is implemented by the factorization homology functor $\int$ from the left, and evaluation on $\RR^n$ from the right.
\end{theorem}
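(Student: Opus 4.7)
The plan is to construct $\int$ as a symmetric-monoidal left Kan extension along the fully faithful inclusion $\disk_n \hookrightarrow \mfld_n$, which on underlying $\oo$-categories yields the pointwise formula
\[\int_M A \; \simeq \; \colim_{(U \hookrightarrow M)\,\in\, (\disk_n)_{/M}} A(U),\]
and then to show the two composites of $\int$ and ${\sf ev}_{\RR^n}$ are naturally equivalent to the identity. Producing $\int A$ as a symmetric monoidal functor requires an operadic left Kan extension and invokes the technical hypothesis on $\cC^\ot$ (namely that $\ot$ distribute over the sifted colimits appearing in the formula). The unit $A \ra {\sf ev}_{\RR^n}\int A$ is then an equivalence by the standard fact that left Kan extension along a fully faithful functor restricts back to the input: $(\disk_n)_{/U}$ has $U$ as a terminal object for every $U\in\disk_n$, so the defining colimit collapses to $A(U)$.

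The heart of the theorem is verifying that $\int A$ satisfies $\ot$-excision: given a collared decomposition $M \simeq M'\cup_{N\times \RR} M''$, I must exhibit a natural equivalence
\[\int_{M'} A \;\underset{\int_{N\times \RR} A}{\otimes}\; \int_{M''} A \;\weq\; \int_M A.\]
The approach is to present $(\disk_n)_{/M}$ as a homotopy pushout of the diagram $(\disk_n)_{/M'} \la (\disk_n)_{/N\times \RR} \ra (\disk_n)_{/M''}$, using an isotopy-extension argument to cofinally restrict to configurations of disks each of which lies in $M'$, lies in $M''$, or lies in the collar $N\times \RR$. Feeding $A$ into this decomposition and invoking the distributivity of $\ot$ over the relevant colimits in $\cC^\ot$ then converts the geometric pushout of indexing categories into the required relative tensor product in $\cC$.

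With $\ot$-excision for $\int A$ established, the counit $\int{\sf ev}_{\RR^n}\cF \ra \cF$ is an equivalence by induction on a handle decomposition of $M$. The base case of a disjoint union of Euclidean spaces reduces to the unit equivalence together with symmetric monoidality; for a single handle attachment $M_k \simeq M_{k-1}\cup_{N\times \RR} H$ with $H\cong\RR^n$, the $\ot$-excision axiom for $\cF$ and the excision result just proved for $\int$ both express the respective values as the same relative tensor product, whence the inductive step. The principal obstacle is the excision step above: transporting a geometric pushout of manifolds through the colimit-over-disks formula into an algebraic pushout in $\cC$. Once this combinatorial claim about the shape of $(\disk_n)_{/M}$ under collared decompositions is in hand, the rest of the equivalence is largely formal, following from the adjointness of left Kan extension and restriction together with the handle-presentation of topological $n$-manifolds.
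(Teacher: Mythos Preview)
Your overall architecture matches the paper's: left Kan extension along the fully faithful inclusion $\disk_n\hookrightarrow\mfld_n$ gives the adjunction, the unit is an equivalence by full faithfulness, and the counit is handled by first proving $\ot$-excision for $\int A$ and then inducting on handles. The gap is in your mechanism for $\ot$-excision. Presenting $(\disk_n)_{/M}$ as a homotopy pushout of $(\disk_n)_{/M'}\leftarrow(\disk_n)_{/N\times\RR}\rightarrow(\disk_n)_{/M''}$ is not the right shape: even granting a cofinal restriction to disks that each lie in one piece, a colimit over a pushout of indexing categories yields at best a \emph{pushout} in $\cC$, and the relative tensor product $\int_{M'}A\otimes_{\int_{N\times\RR}A}\int_{M''}A$ is the realization of a two-sided bar construction, not a pushout. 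Distributivity of $\ot$ over colimits does not bridge this; what is needed is a simplicial structure on the indexing side whose realization produces the bar resolution. The paper supplies exactly this by a different route: it factors the problem through a \emph{pushforward} along $f:M\to[-1,1]$, proving $\int_M A\simeq\int_{[-1,1]}\int_{f^{-1}}A$ via a finality argument for an auxiliary category $\cT_f\to\disk_{n/M}$, and then identifies the outer factorization homology over $[-1,1]$ with the bar construction by exhibiting a final functor $\Delta^{\op}\to\disk^{\partial,{\sf or}}_{1/[-1,1]}$ (sending an embedded $1$-disk to the ordered set of components of its complement). It is this $\Delta^{\op}$-finality that produces the simplicial bar object, and nothing in your sketch plays that role.

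A second, smaller gap: your handle induction assumes every topological $n$-manifold admits a handle decomposition. This fails in dimension $4$. The paper treats $n=4$ separately, using Quinn's theorem that $M\smallsetminus\{x\}$ is smoothable (hence has handles) and then one final application of $\ot$-excision along $M\cong (M\smallsetminus\{x\})\cup_{S^{n-1}\times\RR}\RR^n$.
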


The $n$-disk algebras appearing in the theorem are equivalent to the $\cE_n$-algebras of Boardman and Vogt \cite{bv} together with an extra compatible action of the group of automorphisms of $\RR^n$. Thus, at first glance, this result appears different and more complicated than the Eilenberg--Steenrod axioms because the characterization as $\cV$ alone has been replaced by $n$-disk algebras in $\cV$. 
This is for two reasons, each of substance.
For one, the object $\RR^n$ has more structure than a singleton; for instance, automorphisms of $\RR^n$ form an interesting and noncontractible space, whereas the automorphisms of a singleton is just a point. 
Secondly, the symmetric monoidal structure of $\cV$ is not necessarily the coproduct, so there need not be a canonical map $V\ot V \to V$ for each object $V\in \cV$; this allows for a multiplicative form of homology.
Nonetheless, our result does specialize to Eilenberg and Steenrod's, as we shall see in Example~\ref{ES}. In particular, in the example $\cV = {\sf Ch}^\oplus$, these $n$-disk algebras are equivalent to just chain complexes with an action of the automorphisms of $\RR^n$, and these homology theories are then just ordinary homology twisted by the tangent bundle.

\smallskip

This result has a number of immediate applications and leads to new proofs of known results. For instance, it gives a one line proof that factorization homology of the circle, in the case $\cV$ is chain complexes with tensor product, is Hochschild homology. It also gives a new and short proof of the nonabelian Poincar\'e duality of Salvatore \cite{salvatore}, Segal~\cite{segallocal}, and Lurie \cite{dag}. We give further results and computations in Section 5. 

\smallskip

The second motivation for factorization homology comes from mathematical physics. In the various axiomatics for topological quantum field theory after Segal \cite{segalconformal}, one restricts to compact manifolds, possibly with boundary; the locality of a quantum field theory is then reflected in the functoriality of gluing cobordisms. However, it is frequently possible to define a quantum field theory on noncompact manifolds, such as Euclidean space. Since there are more embeddings between general noncompact manifolds than between closed manifolds, one might then expect the axiomatics of this situation to be slightly different were one to account for the structure in which one can restrict a field, or extend an observable, along an open embedding $M\hookrightarrow M'$.
\smallskip

Our notion of a homology theory for manifolds is thus simultaneously an attempt to axiomatize the structure of the observables in a quantum field theory which is topologically invariant -- here the $\ot$-excision of the homology theory becomes a version of the locality of the field theory -- and Theorem \ref{homology} becomes an algebraic characterization of part of the structure of such quantum field theories. There is another characterization of extended topological field theories, namely the Baez--Dolan cobordism hypothesis, Lurie's proof of which is outlined in \cite{cobordism}, building on earlier work with Hopkins and inspired by ideas of Costello \cite{costello2}. These two characterizations are comparable in several ways. In particular, there is a commutative diagram
\[\xymatrix{
\Alg_{\disk_n^{\sf sm}}(\cV)^{\sim}\ar[d]_{\int}\ar[r]&\bigl(\Alg_{n}(\cV)^{\sim}\bigr)^{\sO(n)}\ar[d]^Z\\
\bH(\mfld^{\sm}_n, \cV)^{\sim} \ar[r]& \Fun^\ot({\sf Bord}^{\sm}_n, \Alg_{n}(\cV))~.\\
}\]
\noindent
This picture should be understood only as impressionistic, and we briefly explain the terms in this picture (see~\S4.1 of~\cite{cobordism} for more explanation): $\disk_n^{\sm}$ and $\mfld^{\sm}_n$ are the $\oo$-categories of smooth $n$-disks and $n$-manifolds with smooth embeddings; ${\sf Bord}^{\sm}_n$ is the $(\oo,n)$-category of smooth bordisms of manifolds from \cite{cobordism}; and $\Alg_{n}(\cV)$ is the higher Morita category, where $k$-morphisms are $\disk_{n-k}^{\fr}$-algebras in bimodules; the superscript $(-)^{\sim}$ denotes underlying $\oo$-groupoids, discarding non-invertible morphisms. The bottom horizontal functor from homology theories valued in $\cV$ to topological quantum field theories valued in $\Alg_{n}(\cV)$, assigns to a homology theory $\cF$ the functor on the bordism category sending a $k$-manifold $M$ with corners to $\cF({M}^\circ\times\RR^{n-k})$, the value of $\cF$ on a collar-thickening of $M$.

\smallskip

Our characterization of homology theories can thereby be seen as an analogue of the cobordism hypothesis for the observables in a topological quantum field theory where one allows for noncompact manifolds and a strong locality principle by which local observables determine global observables.\footnote{There is a slight, but interesting, difference in that the homology theory characterization applies successfully to topological manifolds (as well as piecewise linear or smooth), whereas the cobordism hypothesis requires that the manifolds involved have at least a piecewise linear structure; the absence of triangulations for nonsmoothable topological 4-manifolds appears as a genuine obstruction.} Not all topological field theories come from such homology theories, and the question of which do and do not is an interesting one. Costello and Gwilliam use closely related ideas in studying more general quantum field theories which are not topologically invariant, and the setting of their work suggests that perturbative quantum field theories are exactly those amenable to this characterization; see \cite{kevinowen} and \cite{owen}. The structure of observables of a topological quantum field theory which is not perturbative is better described by a generalization of factorization homology with coefficients given by a stack over $n$-disk algebras \cite{thez}, e.g., an algebraic variety $X$ whose ring of functions $\cO_X$ is enhanced to have a compatible structure of an $n$-disk algebra.

\smallskip

Factorization homology and related ideas have recently become the subject of closer study; in addition to Lurie's originating work, see \cite{dag} and \cite{cobordism}, and Costello and Gwiliam \cite{kevinowen}, see also \cite{andrade}, \cite{gtz2}, \cite{owen}, and \cite{blob}. We expect the theory of factorization homology to be a source of interesting future mathematics and to carry many important manifold invariants. Especially fertile ground lies in low-dimensional topology, in the study of 3-manifold and knot invariants, where invariants stemming from the homology of configuration spaces are already prevalent. This is a source of work joint with Tanaka in \cite{aft2}, where we construct factorization knot homology theories from a $3$-disk algebra with extra structure.

\smallskip

It is a compelling general question as to how much of manifold topology can be captured by factorization homology; this question is closely related to the Goodwillie--Weiss manifold calculus \cite{weiss}. Factorization homology of $M$ is determined by an object $\EE_M$, the presheaf of spaces on $n$-disks determined by embeddings into $M$, which is an \emph{a priori} weaker invariant of $M$ than $M$ itself. $\EE_M$ encodes the homotopy type of $M$, the homotopy type of all higher configuration spaces ${\conf}_j(M)$ of $M$, and the tangent bundles $T\conf_i(M)$, as well as coherence data relating these, and it would be very interesting to know when this is sufficient to reconstruct $M$.

\subsubsection*{\bf Implementation of $\infty$-categories}
In this work, we use Joyal's {\it quasi-category} model  of $\oo$-category theory \cite{joyal}. 
Boardman and Vogt first introduced these simplicial sets in \cite{bv}, as weak Kan complexes, and their and Joyal's theory has been developed in great depth by Lurie in \cite{topos} and~\cite{dag}, our primary references; see the first chapter of \cite{topos} for an introduction. We use this model, rather than model categories or simplicial categories, because of the great technical advantages for constructions involving categories of functors, which are ubiquitous in this work.

More specifically, we work inside of the quasi-category associated to this model category of Joyal's.  In particular, each map between quasi-categories is understood to be an iso- and inner-fibration; (co)limits among quasi-categories are equivalent to homotopy (co)limits with respect to Joyal's model structure.
As we work in this way, we refer the reader to these sources for $\infty$-categorical versions of numerous familiar results and constructions among ordinary categories.  
To point, we will make repeated use of the $\infty$-categorical adjoint functor theorem (Corollary~5.5.2.9 of~\cite{topos}); the straightening-unstraightening equivalence between Cartesian fibrations over an $\infty$-category $\cC$ and $\Cat$-valued contravariant functors from $\cC$ (Theorem~3.2.0.1 of~\cite{topos}), and likewise between right fibrations over $\cC$ and space-valued presheaves on $\cC$ (Theorem~2.2.1.2 of~\cite{topos}); the $\infty$-categorical version of the Yoneda functor $\cC\to \Psh(\cC)\simeq {\sf RFib}_\cC$ as it evaluates on objects as $c\mapsto \cC_{/c}$ (see~\S5.1 of~\cite{topos}).  

We will also make use of topological categories, such as $\mfld_n$ of $n$-manifolds and embeddings among them. By a functor $\cS \ra \cC$ from a topological category to an $\oo$-category $\cC$ we will always mean a functor $\sN\Sing \cS \ra \cC$ from the simplicial nerve of the ${\sf Kan}$-enriched category obtained by applying the product preserving functor $\Sing$ to the morphism topological spaces. 

The reader uncomfortable with this language can substitute the words ``topological category" for ``$\oo$-category" wherever they occur in this paper to obtain the correct sense of the results, but they should then bear in mind the proviso that technical difficulties may then abound in making the statements literally true. The reader only concerned with algebras in chain complexes, rather than spectra, can likewise substitute ``pre-triangulated differential graded category" for ``stable $\oo$-category" wherever those words appear, with the same proviso.

\subsubsection*{\bf Notation}
\begin{itemize}
\item
$\spaces$ is the $\infty$-category of spaces.  This $\infty$-category has numerous constructions and characterizations: as the ${\sf Kan}$-enriched category of Kan complexes; as the free small colimit completion of the terminal $\infty$-category $\ast$; and as $\infty$-groupoids.  

\item
$\ddisk_n$ and $\dmfld_n$ are ordinary categories of manifolds with embeddings; $\disk_n$ and $\mfld_n$ are topological categories of manifolds with embeddings, where the spaces of embeddings carry the compact-open topology. See Definition \ref{def.ddisk} versus Definition \ref{manifold}.

\item
$\Top(n)$ is the topological group of homeomorphisms of $\RR^n$, endowed with the compact-open topology.

\item
After Definition \ref{bmanifold}, we fix a space $B$ with a map $B\ra \BTop(n)$ and consider $B$-framed $n$-manifolds. Any occurrence of $B$ thereafter refers to this choice.

\item
We use $\colim$ and $\limit$ to denote colimits and limits in $\oo$-categories, which correspond to {\it homotopy} colimits and limits in topological categories or model categories. (In the one or two places where we use a point-set colimit, we employ unmistakeably jarring notation to distinguish the two.)

\item 
For $k$ a ring we use the notation ${\sf Mod}_k$ for the $\infty$-category of $k$-modules.  This is an $\infty$-category associated to the differential graded category of chain complexes over $k$.  (See~\S1.3 of~\cite{dag} for a thorough account.) 
We will sometimes use the notation ${\sf Ch}_k$ for this $\infty$-category, and should $k$ be the integers we drop it from the subscript.

\item
$\cE_n$ will stand for the topological operad of little $n$-cubes, as defined in \cite{bv}.
\item 
$\sC_\ast(X)$ is the singular chains on a topological space $X$.

\item 
${\sf Sym}(V)$ is the free commutative algebra on an object $V$ of a symmetric monoidal $\infty$-category. In a symmetric monoidal $\oo$-category, commutative algebras are equivalent to $\cE_{\oo}$-algebras, so ${\sf Sym}(V)$ is also the free $\cE_{\oo}$-algebra on $V$.

\item For $X\in \cX$ an object of an $\infty$-category, we notate $\cX_{/X}$ and $\cX^{X/}$ for the over- and under-$\infty$-categories.
For $(A\to X)$ and $(B\to X)$ two objects of $\cX_{/X}$, we may denote the space of morphisms from the first to the second as $\Map_{/X}(A,B)$.

\end{itemize}

\begin{ack} 
JF foremost thanks Kevin Costello for many conversations on this subject, which have motivated and clarified this work, from Theorem \ref{homology} to the computations of the following sections, and without which JF likely would not have pursued it. 
Our joint works with Hiro Lee Tanaka build and improve on many of the ideas here, and we thank him for his collaboration. 
JF first learned the basic idea of factorization homology in conversations with Jacob Lurie and Dennis Gaitsgory in 2007, and we have both benefitted greatly from their generosity in sharing many other insights in these intervening years. JF thanks Sasha Beilinson and Mike Hopkins for their great influence which has shaped his thoughts on this subject. We also thank Gr\'egory Ginot and Owen Gwilliam for helpful conversations and Pranav Pandit for comments on an earlier draft of this paper. We thank Amabel Wilson and Theo Johnson--Freyd for correcting the hypotheses in the statement of Proposition~\ref{prop.5.3}.
Finally, we thank the anonymous referee whose careful feedback has considerably improved this article. 
\end{ack}

\section{$B$-framed disks and manifolds}

We now specify the details of our basic objects of study, $n$-manifolds.

\subsection{$B$-framings}
We consider a topological category of $n$-manifolds and embeddings among them.
We use this to consider the tangent classifier, which thereafter offers the notion of a $B$-framing on an $n$-manifold, as well as an $\infty$-category of such.  

\begin{definition}\label{manifold} $\mfld_n$ is the symmetric monoidal topological category for which an object is a topological $n$-manifold that admits a finite \emph{good} cover, which is to say a finite open cover by Euclidean spaces with the property that each non-empty intersection of terms in the cover is itself homeomorphic to a Euclidean space.  
The morphism spaces are spaces of embeddings, endowed with the compact-open topology.
The symmetric monoidal structure is disjoint union.\footnote{Thus, any $M\in \mfld_n$ has finitely many connected components, each of which is the interior of a compact manifold with (possibly empty) boundary. This size restriction is not an essential requirement; since all noncompact manifolds are built as sequential colimits of such smaller manifolds, this smallness condition could be removed and one could instead add to Definition \ref{exc} the requirement that a homology theory preserves sequential colimits.}
\end{definition}

In particular, the mapping space is $\Map_{\mfld_n}(M,N) =  \Emb(M,N)$, the space of embeddings of $M$ into $N$ equipped with the compact-open topology. Note that disjoint union is not the coproduct; $\mfld_n$ has almost no nontrivial colimits.

We will be particularly interested in $n$-manifolds equipped with some extra structure such as an orientation or a framing.  
Structure of this sort can be swiftly accommodated by way of the \emph{tangent classifier}: each $n$-manifold $M$ has a tangent microbundle, and it is classified by a map $\tau_M \colon M \to \BTop(n)$ to the classifying space of the topological group ${\sf Top}(n)$ of self-homeomorphisms of $\RR^n$; see \cite{milnorstasheff}. 
For $B\to \BTop(n)$ a map of spaces, a $B$-framing on $M$ is a homotopy commutative diagram among spaces
\[
\xymatrix{
&&
B  \ar[d]
\\
M \ar[rr]_-{\tau_M} \ar[urr]^-{g}
&&
\BTop(n).
}
\]
\begin{example}
Consider the composite continuous homomorphism
\[
{\sf Top}(n) \xra{~(-)^+~}{\sf Aut}_\ast(S^n) \xra{~\pi_0~} \ZZ/2\ZZ
\]
given by applying 1-point compactification to obtain based homotopy automorphisms of a sphere followed by taking path components.  
For ${\sf STop}(n)\subset {\sf Top}(n)$ the kernel of this homomorphism, a ${\sf BSTop}(n)$-framing on a topological $n$-manifold is precisely an orientation.  

\end{example}
Toward formulating an $\infty$-category of $B$-framed $n$-manifolds, we next explain how to make the tangent classifier continuously functorial among open embeddings.  
We will make ongoing use of the following result of Kister and Mazur.
\begin{theorem}[\cite{kister}]\label{kister}
The continuous homomorphism of topological monoids ${\sf Top}(n) \to {\sf Emb}(\RR^n,\RR^n)$ is a homotopy equivalence.  
\end{theorem}
\noindent
Now, temporarily consider the full $\infty$-subcategory $\cE{\sf uc}_n\subset \mfld_n$ consisting solely of $\RR^n$; this $\infty$-category is that associated to the topological monoid $\Emb(\RR^n,\RR^n)$ of self-embeddings of $\RR^n$.  
We draw an immediate consequence of the Kister--Mazur Theorem.
\begin{cor}\label{euc}
The canonical functor 
\[
\BTop(n) \longrightarrow \cE{\sf uc}_n
\]
is an equivalence of $\infty$-categories.  
In particular, there is a preferred equivalence of $\infty$-categories
\[
\Psh(\cE{\sf uc}_n)~\simeq~\spaces_{/\BTop(n)}
\]
between space-valued presheaves on $\cE{\sf uc}_n$ and spaces over $\BTop(n)$. 

\end{cor}
After Corollary~\ref{euc} we have a tangent classifier, functorial in a coherent homotopy sense, given by the restricted Yoneda functor:
\begin{equation}\label{tau}
\tau\colon \mfld_n \longrightarrow \Psh(\mfld_n) \longrightarrow \Psh(\cE{\sf uc}_n)~\underset{\rm Cor~\ref{euc}}\simeq~\spaces_{/\BTop(n)}~.
\end{equation}
We will postpone to Corollary~\ref{tang-class} justification for this terminology.
To define the $\infty$-category of $B$-framed $n$-manifolds as it is equipped with a symmetric monoidal structure, we record a few standard facts about $\infty$-categories together with an observation about the functor $\tau$.
\begin{lemma}\label{slice-slice}
Let $\cS$ be an $\infty$-category and let $S\in \cS$ be an object.  
\begin{enumerate}

\item For each morphism $S'\to S$ in $\cS$, the canonical functor among over $\infty$-categories $(\cS_{/S})_{/(S'\to S)} \to \cS_{/S'}$ is an equivalence.  

\item Should $\cS$ admit finite coproducts, the over $\infty$-category $\cS_{/S}$ admits finite coproducts and they are preserved by the projection functor $\cS_{/S} \to \cS$.

\end{enumerate}
In addition, the $\infty$-category of symmetric monoidal $\infty$-categories ${\sf Cat}_\infty^\ot$ admits limits and they are preserved by the forgetful functor ${\sf Cat}_\infty^{\ot} \to \Cat$.
\end{lemma}

\begin{proof}
Through the defining adjunctions for over $\infty$-categories, the first assertion follows because, for each $\infty$-category $\cK$, the canonical diagram among $\infty$-categories
\[
\xymatrix{
\{0\}  \ar[r] \ar[d]
&
\{0<1\}   \ar[d]
\\
\cK\star\{0\}   \ar[r]
&
\cK\star \{0<1\}
}
\]
is a pushout; here, for $\cK$ and $\cJ$ $\infty$-categories, 
\[
\cK \star \cI~ :=~ \cK \underset{\cK \times\{0\}\times \cI} \coprod \cK\times\{0<1\}\times \cI \underset{\cK\times \{1\}\times\cI}\coprod \cI
\]
denotes the join of $\infty$-categories.  
The second assertion follows directly from the universal property of coproducts.  
The final assertion follows from Proposition~3.2.2.1 of~\cite{dag}, which in particular gives that, for each Cartesian closed presentable $\infty$-category $\cC$, the forgetful functor from commutative algebras $\Alg_{\sf Com}(\cC^\times) \to \cC$ preserves and creates limits.
Apply this result to the case $\cC = \Cat$.  

\end{proof}

\begin{observation}
Because $\RR^n$ is connected, this tangent classifier $\tau\colon \mfld_n \to \spaces_{/\BTop(n)}$ is symmetric monoidal with respect to coproducts in the codomain.
In other words, $\tau$ carries finite disjoint unions to finite coproducts over $\BTop(n)$.

\end{observation}

\begin{definition}\label{bmanifold}
The symmetric monoidal $\oo$-category $\mfld_n^B$ of $B$-framed topological $n$-manifolds is the limit in the following diagram:
\[\xymatrix{
\mfld^B_n\ar[r]\ar[d]&\spaces_{/B}\ar[d]\\
\mfld_n\ar[r]^-\tau&\spaces_{/\BTop(n)}.
}
\]
\end{definition}

Since passage from $\oo$-categories to their spaces of morphisms preserves limits, there is a corresponding expression for mapping spaces: for two $B$-framed manifolds, $M$ and $N$, the space of $B$-framed embeddings of $M$ to $N$ is the homotopy pullback
\[\xymatrix{
\Emb^B(M,N)\ar[r]\ar[d]&\Map_{/B}(M,N)\ar[d]\\
\Emb(M,N)\ar[r]&\Map_{/\sf BTop(n)}(M,N),\\}\] where $\Map_{/X}(M,N)$ is the space of maps of $M$ to $N$ over $X$, a point of which can be taken to be a map $M\ra N$ and a homotopy between the two resulting maps from $M$ to $X$.

The following assures us that these spaces of $B$-framed embeddings have tractable homotopy types.

\begin{lemma} A $B$-framing $g$ of $\RR^n$ determines a homotopy equivalence $\Emb^B(\RR^n, \RR^n) \simeq \Omega_g B$ of topological monoids, where $\Omega_g B$ is the loop space of $B$ based at the homotopy point $g: \RR^n \ra B$.
\end{lemma}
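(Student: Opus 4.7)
The plan is to unwind the defining homotopy pullback for $\Emb^B(\RR^n,\RR^n)$ and exploit the contractibility of $\RR^n$. The strategy is that the natural map $\Emb(\RR^n,\RR^n) \to \Map_{/\BTop(n)}(\RR^n,\RR^n)$ will be shown to be a homotopy equivalence of topological monoids, so the defining pullback
\[\Emb^B(\RR^n,\RR^n) = \Emb(\RR^n,\RR^n)\underset{\Map_{/\BTop(n)}(\RR^n,\RR^n)}{\times^h} \Map_{/B}(\RR^n,\RR^n)\]
collapses onto the other leg $\Map_{/B}(\RR^n,\RR^n)$, which a separate contractibility argument will identify with $\Omega_\tau B$.

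First I would establish the following general fact: for any space $X$ and map $\xi : \RR^n \to X$, there is a natural equivalence of topological monoids $\Map_{/X}(\RR^n,\RR^n) \simeq \Omega_\xi X$, with composition of maps over $X$ corresponding to concatenation of loops. Indeed, $\Map_{/X}(\RR^n,\RR^n)$ is by definition the homotopy pullback $\{\xi\}\times^h_{\Map(\RR^n,X)}\Map(\RR^n,\RR^n)$; since $\RR^n$ is contractible one has $\Map(\RR^n,\RR^n)\simeq\ast$ and $\Map(\RR^n,X)\simeq X$, reducing the pullback to $\ast\times^h_X\ast\simeq\Omega_\xi X$. Applied to $(X,\xi)=(B,\tau)$ and to $(X,\xi)=(\BTop(n),\tau_{\RR^n})$, this yields $\Map_{/B}(\RR^n,\RR^n)\simeq\Omega_\tau B$ and $\Map_{/\BTop(n)}(\RR^n,\RR^n)\simeq\Omega\BTop(n)\simeq\Top(n)$ respectively, using the standard identification of the based loop space of a classifying space with the underlying topological group.

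The main obstacle is the verification that the natural forgetful map $\Emb(\RR^n,\RR^n)\to\Map_{/\BTop(n)}(\RR^n,\RR^n)\simeq\Top(n)$---sending an embedding $f$ together with its canonical Kister-type identification of the pulled-back tangent microbundle $f^*\tau_{\RR^n}\cong\tau_{\RR^n}$---is an equivalence of topological monoids. This is essentially Kister's theorem: evaluation at the origin produces a fibration $\Emb(\RR^n,\RR^n)\to\RR^n$ with contractible base and fiber the origin-preserving embeddings $\Emb_0(\RR^n,\RR^n)$, and the inclusion $\Top(n)\hookrightarrow\Emb_0(\RR^n,\RR^n)$ is a homotopy equivalence of topological monoids. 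The compatibility of the resulting equivalence $\Emb(\RR^n,\RR^n)\simeq\Top(n)$ with the microbundle classifying map is encoded in the fact that the universal $\RR^n$-microbundle over $\BTop(n)$ has structure group $\Top(n)$.

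Granting this step, the bottom map in the pullback defining $\Emb^B(\RR^n,\RR^n)$ is an equivalence, so the homotopy pullback collapses onto the other leg to give $\Emb^B(\RR^n,\RR^n)\simeq\Map_{/B}(\RR^n,\RR^n)\simeq\Omega_\tau B$. Naturality of each intermediate identification ensures that the chain of equivalences respects the topological monoid structures, with composition of $B$-framed self-embeddings of $\RR^n$ corresponding to loop concatenation in $\Omega_\tau B$.
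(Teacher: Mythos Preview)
Your proposal is correct and follows essentially the same route as the paper: both identify $\Map_{/B}(\RR^n,\RR^n)\simeq\Omega_\tau B$ via contractibility of $\RR^n$, and both use the Kister--Mazur theorem together with the equivalence $\Top(n)\simeq\Omega\BTop(n)$ to conclude that the bottom map $\Emb(\RR^n,\RR^n)\to\Map_{/\BTop(n)}(\RR^n,\RR^n)$ in the defining pullback is an equivalence, hence so is the top. The only cosmetic difference is that the paper phrases the Kister--Mazur step by factoring the composite $\Top(n)\to\Emb(\RR^n,\RR^n)\to\Omega\BTop(n)$, whereas you argue via the fibration $\Emb(\RR^n,\RR^n)\to\RR^n$ with fiber $\Emb_0(\RR^n,\RR^n)$; these are equivalent invocations of the same theorem.
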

\begin{proof} By definition, the space $\Emb^B(\RR^n, \RR^n)$ sits in a homotopy pullback square:
\[
\xymatrix{
\Emb^B(\RR^n, \RR^n) \ar[r]\ar[d]& \Map_{/\negthinspace B}(\RR^n, \RR^n)\ar[d]\\
\Emb(\RR^n, \RR^n) \ar[r]&\Map_{/\BTop(n)}(\RR^n, \RR^n).
}
\]
There are evident equivalences of spaces $\Map_{/\BTop(n)}(\RR^n, \RR^n)\simeq \Omega \BTop(n)\simeq \Top(n)$ and likewise $\Map_{/\negthinspace B}(\RR^n, \RR^n)\simeq \Omega_g B$. 
It is standard that the the composite map of spaces
\[
\Top(n)\longrightarrow \Emb(\RR^n,\RR^n)\longrightarrow\Map_{/\BTop(n)}(\RR^n,\RR^n)\simeq \Omega \BTop(n)
\] 
is an equivalence. 
By Kister--Mazur, Theorem \ref{kister}, the first map including $\Top(n)$ into $\Emb(\RR^n, \RR^n)$ is a homotopy equivalence. It follows that the middle map in the above display is also an equivalence of spaces.
We conclude that the bottom horizontal map in the above homotopy pullback square is an equivalence of spaces.
This implies the top horizontal map too is an equivalence of spaces.  

\end{proof}

\subsection{Disks}
We now consider $B$-framed $n$-disks.
In terms of configuration spaces, we identify the maximal $\infty$-subgroupoid of $\disk^B_{n/M}$, $B$-framed $n$-disks embedding into a $B$-framed $n$-manifold.

\begin{definition} The symmetric monoidal $\oo$-category $\disk^B_n$ is the full $\oo$-subcategory of $\mfld^B_n$ whose objects are disjoint unions of $B$-framed $n$-dimensional Euclidean spaces.
\end{definition}

\begin{remark} Consider $\ast\ra \BTop(n)$, the basepoint of $\BTop(n)$. A $\ast$-structure on an $n$-manifold $M$ is then equivalent to a topological framing of the tangent microbundle $\tau_M$ of $M$,\footnote{By smoothing theory, framed topological manifolds are essentially equivalent to framed smooth manifolds except in dimension 4.}
and we denote the associated $\oo$-category of framed $n$-disks as $\disk^{\fr}_n$. This symmetric monoidal $\infty$-category $\disk_n^{\fr}$ is homotopy equivalent to the PROP associated to the $\cE_n$ operad of Boardman-Vogt \cite{bv}.
This follows because the inclusion of rectilinear embeddings as framed embeddings determines a homotopy equivalence $\cE_n(I) \xra\sim \Emb^{\sf fr}(\bigsqcup_I \RR^n, \RR^n)$ from the $I$-ary space of the $\cE_n$ operad.  
\end{remark}

\begin{example}
For $B=\BO(n)$, with the usual map $\BO(n)\ra \BTop(n)$, the $\oo$-category of topological $n$-disks with $\BO(n)$-framings is equivalent to the $\oo$-category of smooth $n$-disks and smooth embeddings, $\disk_n^{\sm}\simeq\disk_n^{{\sf BO}(n)}$. 
These are both equivalent to the PROP associated to the unoriented version of the ribbon, or ``framed," $\cE_n$ operad; see \cite{sw} for a treatment of this operad.\footnote{The historical use of ``framed" here is potentially misleading, since in the ``framed" $\cE_n$ operad the embeddings do not preserve the framing, while in the usual $\cE_n$ operad the embeddings do preserve the framing (up to scale). It might lead to less confusion to replace the term ``framed $\cE_n$ operad" with ``unoriented $\cE_n$ operad."} 
To see these equivalences it is enough to explain why each of the natural maps $\sO(n) \ra \Emb^{\sm}(\RR^n,\RR^n) \ra \Emb^{{\sf BO}(n)}(\RR^n,\RR^n)$ is an equivalence.
Smoothing theory~(\cite{kirbysieb}) gives the equivalence of the second map.  
Via Gram--Schmidt, the inclusion $\sO(n) \xra{\simeq} {\sf GL}(\RR^n)$ is a deformation retraction.  
Conjugation by scaling and translation, $(f,t)\mapsto \bigl(x\mapsto  \frac{f(tx)-f(0)}{t}+f(0)\bigr)$, demonstrates the inclusion ${\sf GL}(\RR^n) \xra{\simeq} \Emb^{\sf sm}(\RR^n,\RR^n)$ as a deformation retraction.
\end{example}

Given a topological space $X$ and a finite cardinality $i$, we let $\conf_i(X)\subset X^i$ denote the subspace of those maps $\{1,\dots,i\}\to X$ which are \emph{injective}.  
This configuration space has an evident free action of the symmetric group $\Sigma_i$.

In the next result, for $M$ a $B$-framed $n$-manifold, we consider the over $\infty$-category 
\[
\disk_{n/M}^B~:=~ \disk^B_n \underset{\mfld_n^B}\times \mfld^B_{n/M}~.
\]
Informally, an object is an embedding $\underset{i}\sqcup \RR^n \hookrightarrow M$ for some $i$.  

\begin{lemma}\label{EE-equivs}
The maximal $\infty$-subgroupoid of $\disk_n^B$ is canonically identified as the space
\[
\underset{i\geq 0} \coprod {B}^i_{\Sigma_i}~\simeq~\bigl(\disk_n^B\bigr)^\sim
\]
where the coproduct is indexed by finite cardinalities and each cofactor is the $\Sigma_i$-homotopy coinvariants of the $i$-fold product of the space $B$.  
In particular, the symmetric monoidal functor $[-]\colon \disk_n^B \to \fin$, given by taking sets of connected components of underlying manifolds, is conservative.  

For $M$ a $B$-framed $n$-manifold,
the maximal $\infty$-subgroupoid of $\disk^B_{n/M}$ is canonically identified as the space
\[
\underset{i\geq 0}\coprod \conf_i(M)_{\Sigma_i}~\simeq~\bigl(\disk^B_{n/M}\bigr)^\sim
\]
where the coproduct is indexed by finite cardinalities, and each cofactor is an unordered configuration space. 

\end{lemma}

\begin{proof}
Lemma~\ref{slice-slice} gives an equivalence $\disk_{n/M}^B\simeq \disk_{n/M}$.
So it suffices to assume the case of an equality $B= \BTop(n)$.

The maximal $\infty$-subgroupoid of $\disk_n$ necessarily lies over the maximal $\infty$-subgroupoid of $\fin$, which is $\underset{i\geq 0}\coprod \sB\Sigma_i$.  
The first assertion will be implied upon verifying, for each $i\geq 0$, that the map
\[
{\Sigma_i\wr {\sf Top}(n)}  \longrightarrow \Emb(\underset{i} \sqcup \RR^n,\underset{i}\sqcup \RR^n)
\]
is weakly homotopy equivalent to an inclusion of connected components.  
This is an immediate consequence of the Kister--Mazur Theorem \ref{kister}.  
Via translation, the inclusion of the subgroup of origin preserving homeomorphisms $\Top_0(n) \xra{\simeq}\Top(n)$ is a homotopy equivalence, and so we recognize further the identification
\[
\underset{i\geq 0}\coprod \sB(\Sigma_i\wr \Top_0(n))\xra{~\simeq~} (\disk_n)^\sim~.
\]

Because the projection $\disk_{n/M}\to \disk_n$ is a right fibration, we recognize the maximal $\infty$-subgroupoid of $\disk_{n/M}$ as
\[
\underset{i\geq 0} \coprod \Emb\bigl(\underset{i}\sqcup \RR^n,M\bigr)_{\Sigma_i\wr \Top_0(n)}\xra{~\simeq~}(\disk_{n/M})^\sim~.
\]
Therefore, the second assertion follows upon showing that the $\Sigma_i$-equivariant continuous map
\[
{\sf ev}_0\colon \Emb\bigl(\underset{i}\sqcup \RR^n,M\bigr)_{\Top_0(n)^i} \longrightarrow \conf_i(M)
\]
is a weak homotopy equivalence.  
This is implied upon showing the homotopy fiber of the continuous map
\[
{\sf ev}_0\colon \Emb(\underset{i}\sqcup \RR^n,M) \longrightarrow \conf_i(M)
\]
is weakly homotopy equivalent to $\Top_0(n)^i$.  
In a standard manner, this map is a Serre fibration, and the fiber over $c\colon \{1,\dots,i\}\hookrightarrow M$ is the space $\Emb_0(\underset{i}\sqcup \RR^n,M)$ of embeddings under $c$.  
Fix such a based embedding $e_0\colon \underset{i}\sqcup \RR^n \hookrightarrow M$.  
So we must show that the composite inclusion
\[
\Top_0(n)^i \hookrightarrow \Emb((0\in \RR^n),(0\in \RR^n))^i\cong \Emb_0(\underset{i}\sqcup \RR^n,\underset{i}\sqcup \RR^n)\xra{-\circ e_0} \Emb_0(\underset{i}\sqcup \RR^n,M)
\]
is a weak homotopy equivalence.

The Kister--Mazur Theorem gives that the first of these maps is a homotopy equivalence. 
The problem is thus reduced to showing that the second of these maps is a weak homotopy equivalence.  
This is the problem of showing that each solid diagram among topological spaces
\[
\xymatrix{
S^{k-1} \ar[rr]^-{f_0}  \ar[d]
&&
\Emb_0(\underset{i}\sqcup \RR^n,\underset{i}\sqcup \RR^n)  \ar[d]
\\
\DD^k  \ar[rr]_-{f}  \ar@{-->}[urr]^-{\widetilde{f}}  
&&
\Emb_0(\underset{i}\sqcup \RR^n,M)
}
\]
admits a filler with respect to which the diagram commutes up to homotopy.
Choose a continuous map $\phi\colon (0,1]\times \RR^n \to \RR^n$ such that $\phi_t$ is an origin preserving open embedding for each $t$, $\phi_1 = {\sf id}_{\RR^n}$, the closure $\overline{\phi_s(\RR^n)}\subset \phi_t(\RR^n)$ whenever $s<t$, and the collection of images $\{\phi_t(\RR^n)\mid 0<t\leq 1\}$ is a basis for the topology about $0\in \RR^n$.  
Choose a continuous map $\DD^k \xra{\epsilon} (0,1]$ for which the restriction $\epsilon_{|S^{k-1}} \equiv 1$ is identically one, and the composition
\[
\DD^k\xra{~f~}\Emb_0(\underset{i}\sqcup \RR^n,M) \xra{~(\underset{i} \sqcup \phi_\epsilon)^\ast~} \Emb_0(\underset{i}\sqcup \RR^n,M) 
\]
factors through $\Emb_0(\underset{i}\sqcup \RR^n,\underset{i}\sqcup \RR^n)$.  
Define $\widetilde{f}$ to be this factorization.  
By construction, the restriction $\widetilde{f}_{|S^{k-1}} = f_0$.  
The map $[0,1]\times\DD^k \to \Emb_0(\underset{i}\sqcup \RR^n,M)$ given by $(t,p)\mapsto f\circ (\underset{i}\sqcup \phi_{t\epsilon +(1-t)})^\ast(p)$ demonstrates a homotopy making the lower triangle commute.

\end{proof}

We conclude this section by justifying the term \emph{tangent classifier} for the functor $\mfld_n \xra{\tau} \spaces_{/\BTop(n)}$ of~(1).  
\begin{cor}\label{tang-class}
The value of the tangent classifier~(\ref{tau}) on a topological $n$-manifold $M$ is the map of spaces $M \xra{\tau_M} \BTop(n)$ classifying the tangent microbundle.  

\end{cor}

\begin{proof}
Recognize $\cE{\sf uc}_n\subset \disk_n$ as the full $\infty$-subcategory consisting of the connected $n$-manifolds.  
Specialize the second statement of Lemma~\ref{EE-equivs} to $i=1$ to obtain an identification
\[
M~\simeq~ \cE{\sf uc}_{n/M}~\simeq~ \Emb(\RR^n,M)_{{\sf Top}(n)} \longrightarrow \BTop(n)
\]
involving the homotopy ${\sf Top}(n)$-coinvariants. 
Manifestly, this map of spaces agrees with the tangent classifier in the case $M=\RR^n$.
By construction, this map is functorial in the argument $M \in \mfld_n$. The general case follows.  
\end{proof}

\subsection{Manifolds with boundary}
We will also employ the category of topological manifolds with boundary.

\begin{definition}  $\mfld_n^\partial$ is the symmetric monoidal topological category of topological $n$-manifolds, possibly with boundary, which have finite good covers by Euclidean spaces $\RR^n$ and upper half spaces $\RR_{\geq 0} \times \RR^{n-1}$. Morphisms are open embeddings which map boundary to boundary. $\disk_n^\partial$ is the full symmetric monoidal topological subcategory of $\mfld_n^\partial$ consisting of finite disjoint unions of $\RR^n$ and $ \RR_{\geq 0}\times \RR^{n-1}$.
\end{definition}

\begin{remark} The category $\disk_n^\partial$ is designed to be minimal with respect to the condition that any finite subset of an $n$-manifold with boundary has an open neighborhood homeomorphic to an object of $\disk_n^\partial$. In particular, the closed $n$-disk $\DD^n$ is consequently not an object of $\disk_n^\partial$.
\end{remark}

The following property is essential.

\begin{prop}\label{product} The functor \[\RR_{\geq 0}\times-:\mfld_{n-1} \longrightarrow \mfld_n^\partial\] is homotopically fully faithful. That is, for every pair of $(n-1)$-manifolds $M$ and $N$, the map \[\Emb(M,N) \longrightarrow \Emb\bigl(\RR_{\geq 0}\times M,\RR_{\geq 0}\times N\bigr)\] is a homotopy equivalence.
\end{prop}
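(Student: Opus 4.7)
The plan is to exhibit the map in question as a section of a Serre fibration with contractible fibers. Restriction to the boundary gives a natural map
\[
r : \Emb^{\partial}(\RR_{\geq 0}\times M,\RR_{\geq 0}\times N)\longrightarrow \Emb(M,N),\qquad g\mapsto g|_{\{0\}\times M},
\]
under the identification $\{0\}\times M\cong M$ and similarly for $N$ (this makes sense because a boundary-preserving embedding must carry $\{0\}\times M$ into $\{0\}\times N$). The composite $r\circ(\RR_{\geq 0}\times-)$ is the identity on $\Emb(M,N)$, so it suffices to prove that $r$ is a weak equivalence.

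Next I would argue that $r$ is a Serre fibration. Given a family of embeddings $f_s : M\hookrightarrow N$ parametrized by $s\in I^k$ and a chosen lift $g_0$ over $f_0$, the topological isotopy extension theorem for manifolds with boundary (Edwards--Kirby) produces an ambient isotopy of $\RR_{\geq 0}\times N$ that transports $g_0$ to a family $g_s$ of boundary-preserving embeddings with $g_s|_{\{0\}\times M}=f_s$. This is a standard application; the only mild point is that the ambient isotopy must be chosen to preserve the boundary, which is automatic since the isotopy extension is performed inside the codimension-zero submanifold $\RR_{\geq 0}\times N$ over a family of boundary-preserving data.

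It then remains to show that each fiber $F_f:=r^{-1}(f)$ is weakly contractible. Since $f$ is codimension zero, $U:=f(M)$ is open in $N$, and precomposition with $f$ on the second factor identifies $F_f$ with the space of boundary-preserving embeddings $\RR_{\geq 0}\times U\hookrightarrow \RR_{\geq 0}\times N$ extending the inclusion $U\hookrightarrow N$ on $\{0\}\times U$. This space has a distinguished point, namely the canonical product embedding. I would contract it onto this point using Brown's theorem on topological collars together with its parametrized refinement: the space of topological collars of the boundary of a topological manifold is contractible, and this contractibility is natural in local data. To pass from bounded collars to the unbounded ``collars'' that make up $F_f$, one uses the contractibility of $\Emb^{\partial}(\RR_{\geq 0},\RR_{\geq 0})$ (the space of boundary-preserving self-embeddings of $[0,\infty)$, which is a contractible monoid of monotone continuous injections) to match any embedding to the product embedding outside a germ of the boundary.

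The hard step is this last one, Step 4. In the smooth category it is immediate from a radial rescaling $g\mapsto g_s$ with $g_s(t,x)=s^{-1}g_1(st,x),\, g_2(st,x))$, which converges as $s\to 0$ to the linearization of $g$ transverse to the boundary; in the purely topological setting no such derivative exists, and one must instead extract from the Kister--Mazur theorem (or equivalently, from topological collar uniqueness) the fact that transverse behavior at the boundary is homotopically trivial. An alternative, perhaps cleaner, route to Step~4 is to reduce by Weiss descent on $U$ to the model case $U=\RR^{n-1}$, in which case the contractibility of $F_f$ reduces to a half-space version of Kister--Mazur: the inclusion of $\Top(n-1)$ into the boundary-preserving self-embeddings of $\RR_{\geq 0}\times\RR^{n-1}$ that fix the origin is a weak equivalence.
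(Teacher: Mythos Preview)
Your outline is defensible but far heavier than the paper's argument, which is a direct deformation retraction in the style of the Alexander trick: writing $g_0=g|_{\{0\}\times M}$, one pushes $g$ off to infinity in the $\RR_{\geq0}$-direction while filling in behind it with the product embedding $(s,x)\mapsto(s,g_0(x))$; concretely, for a parameter $c_t\to\infty$ as $t\to1$, set $h_t(g)$ equal to the product map on $[0,c_t]\times M$ and equal to the $c_t$-translate of $g$ on $[c_t,\infty)\times M$. No isotopy extension, collar uniqueness, or Kister--Mazur is invoked. It is telling that your acknowledged hard step---contractibility of the fiber $F_f$---is most cleanly done by exactly this push-off homotopy (every $g\in F_f$ has $g_0$ equal to the inclusion, so $h_t$ contracts $F_f$ to the product inclusion); but once that homotopy is in hand it applies globally and the fibration scaffolding becomes unnecessary. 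Note too that the ``half-space Kister--Mazur'' you propose as input for your second route to Step~4 is, in the paper, deduced as a \emph{corollary} of this very proposition together with ordinary Kister--Mazur, so that route is close to circular. Your approach would purchase a base-times-fiber picture at the cost of Edwards--Kirby and parametrized collar theorems (with attendant care about noncompact $M$); the paper's buys a two-line proof.
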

\begin{proof} This follows by the standard method of pushing off to infinity in the $\RR_{\geq 0}$ direction (as in the Alexander trick or the contractibility of foliations on $\RR^n$ up to integrable homotopy). That is, define a deformation retraction onto the subspace $\Emb(M,N)$ by defining for each $t\in [0,1]$ the map
\[h_t: \Emb\bigl(\RR_{\geq 0}\times M, \RR_{\geq 0}\times N\bigr)\longrightarrow \Emb\bigl(\RR_{\geq 0}\times M, \RR_{\geq 0}\times N\bigr)\] by

\[ h_t(g)(s,x)= \left\{ 
  \begin{array}{l l}
    (s, g_0(x)) 
    & 
    \quad \text{for $s< (1-t)^{-1}-1$}
    \\
    g(s +1 - (1-t)^{-1}, x) 
    & 
    \quad \text{for $s\geq (1-t)^{-1}-1$}
  \end{array} \right.
\] 
  where $g_0: M\hookrightarrow N$ is the restriction of $g$ at the value $s=0$.

\end{proof}

 \begin{remark} Together with the Kister--Mazur Theorem~\cite{kister}, the previous proposition implies that the map \[\Top(n-1)\hookrightarrow\Emb(\RR_{\geq 0}\times \RR^{n-1}, \RR_{\geq 0}\times\RR^{n-1})\] is a homotopy equivalence. Likewise, $\disk^\partial_n$ is an unoriented variant of the Swiss cheese operad of Voronov \cite{voronov}. Namely, the framed variant $\disk_n^{\partial, \fr}$ is homotopy equivalent to the PROP associated to the Swiss cheese operad.\end{remark}

\subsection{Localizing with respect to isotopy equivalences}
Here we explain that the $\infty$-category $\disk^B_{n/M}$ is a localization of its un-topologized version $\ddisk^B_{n/M}$ on the collection of those inclusions of finite disjoint unions of disks $U\subset V$ in $M$ that are isotopic to an isomorphism.  
This comparison plays a fundamental role in recognizing certain colimit expressions in this theory, for instance those that support the pushforward formula of~\S\ref{sec.push}.

\begin{definition}\label{def.ddisk}
The ordinary symmetric monoidal category $\dmfld_n$ is that for which an object is a topological $n$-manifold, and a morphism is an open embedding between two such; composition is composition of maps, and the symmetric monoidal structure is given by disjoint union.
Likewise, the ordinary symmetric monoidal category $\ddisk_n\subset \dmfld_n$ is the full subcategory consisting of those topological $n$-manifolds that are homeomorphic to a finite disjoint union of Euclidean spaces.  

\end{definition}

Notice the natural functors
\[
\ddisk_n \longrightarrow \disk_n\qquad \text{ and }\qquad  \dmfld_n \longrightarrow \mfld_n
\]
which are symmetric monoidal.  
We denote the pullback symmetric monoidal $\infty$-categories
\[
\xymatrix{
\ddisk_n^B  \ar[d]  \ar[r]  
&
\disk_n^B  \ar[d]
&&
\dmfld_n^B \ar[r]  \ar[d]
&
\mfld_n^B  \ar[d]
\\
\ddisk_n  \ar[r]
&
\disk_n
&\text{ and }&
\dmfld_n \ar[r]
&
\mfld_n.
}
\]
For each topological $n$-manifold $M$,
we denote the $\infty$-subcategory 
\begin{equation}\label{eqn.I_X}
\cI_M~\subset ~\ddisk^B_{n/M}:= \ddisk_n^B \underset{\dmfld_n^B}\times \dmfld^B_{n/M}
\end{equation}
consisting of the same objects but only those morphisms $(U\hookrightarrow M) \hookrightarrow (V\hookrightarrow M)$ whose image in $\disk^B_{n/M}$ is an equivalence. 

\begin{prop}\label{EEd-vs-EE}
The functor
$
\ddisk^B_{n/M} \longrightarrow \disk^B_{n/M}
$
witness a localization of $\infty$-categories:
\[
\bigl(\ddisk^B_{n/M}\bigr)[\cI_M^{-1}]~\simeq~\disk^B_{n/M}~.
\]

\end{prop}

\begin{proof}
Manifestly, the functor is essentially surjective, and it carries the $\infty$-subcategory $\cI_M$ to the maximal $\infty$-subgroupoid $\bigl(\disk^B_{n/M}\bigr)^\sim$. 
There results a functor $\bigl(\ddisk^B_{n/M}\bigr)[\cI_M^{-1}]\to \disk^B_{n/M}$ from the localization.  
We will argue that this functor is an equivalence by showing it is an equivalence on maximal $\infty$-subgroupoids, then that it is an equivalence on spaces of morphisms.
After Lemma~\ref{slice-slice}, it is enough to consider the case of $B=\BTop(n)$.  
We will adopt the following notation for this proof:
\[
\sD_M~:=~\ddisk_{n/M} \qquad\text{ and }\qquad \cD_M~:=~\disk_{n/M}~.
\]

The maximal $\infty$-subgroupoid of $\sD_M$ is the classifying space $\sB \cI_M$.  
In light of the coproduct expression in Lemma~\ref{EE-equivs}, fix a cardinality $i\geq 0$.  
Consider the full subcategory $\cI_M^{i}\subset \cI_M$ consisting of those $(U\hookrightarrow M)$ for which the cardinality of the connected components $|[U]|=i$.
We thus seek to show that the resulting functor $\cI_M^{i} \to \conf_i(M)_{\Sigma_i}$ witnesses an equivalence from the classifying space.  
We explain the following sequence of weak homotopy equivalences
\begin{eqnarray}
\nonumber
\sB \cI^{i}_M
&
\simeq
&
\underset{(U\hookrightarrow M)\in \cI^{i}_M} \colim~ (\RR^n)^i
\\
\nonumber
&
\xra{\simeq}
&
\underset{(U\hookrightarrow M)\in \cI^{i}_M} {\sf p.s.colim}~ (\RR^n)^i
\\
\nonumber
&
\xra{\cong}
&
\conf_{i}(M)_{\Sigma_{i}}
\end{eqnarray}
where ${\sf p.s.colim}$ denotes the ordinary point-set colimit of topological spaces.  
The first equivalence is formal, because each term in the homotopy colimit is contractible.
By inspection, the category $\cI_M^i$ forms a basis for the standard Grothendieck topology on $\conf_{i}(M)_{\Sigma_{i}}$.  
The third homeomorphism follows.
Because $\conf_i(M)_{\Sigma_i}$ is paracompact, Corollary~1.6 of~\cite{dugger-isaksen} gives that the second map is a weak homotopy equivalence.
In summary, we have verified that the map of maximal $\infty$-subgroupoids
\[
\bigl(\sD_M[\cI_M^{-1}]\bigr)^{\sim} \xra{~\simeq~} \bigl(\cD_M\bigr)^{\sim}
\]
is an equivalence.  

We now show that the functor from the localization induces an equivalence on spaces of morphisms.
Consider the diagram of spaces
\[
\xymatrix{
\bigl(\sD_U[\cI_U^{-1}]\bigr)^{\sim}  \ar[r]  \ar[d]_-{(U\hookrightarrow M)}
&
\cD_U^{\sim}  \ar[d]^-{(U\hookrightarrow M)}
\\
\bigl(\sD_M[\cI_M^{-1}]\bigr)^{(1)} \ar[r]  \ar[d]_-{{\sf ev}_1}
&
\cD_M^{(1)}  \ar[d]^-{{\sf ev}_1}
\\
\bigl(\sD_M[\cI_M^{-1}]\bigr)^{\sim}  \ar[r]
&
\cD_M^{\sim}
}
\]
where a superscript ${}^{(1)}$ indicates a space of morphisms, and the upper vertical arrows are given as $(V\hookrightarrow U)\mapsto \bigl((V\hookrightarrow M)\hookrightarrow (U\hookrightarrow M)\bigr)$.
Our goal is to show that the middle horizontal arrow is an equivalence.
We will accomplish this by showing that the diagram is a map of homotopy fiber sequences, for we have already shown that the top and bottom horizontal maps are equivalences.

The right vertical sequence is a fiber sequence is because such evaluation maps are coCartesian fibrations, in general.  
Then, by inspection, the fiber over $(U\hookrightarrow M)$ is the maximal $\infty$-subgroupoid of the over $\infty$-category $(\cD_M)_{/(U\hookrightarrow M)}$.
This over $\infty$-category is canonically identified as $\cD_U$.  

We now show that the left vertical sequence is a homotopy fiber sequence.  
The space of morphisms $\bigl(\sD_M[\cI_M^{-1}]\bigr)^{(1)}$ is the classifying space of the subcategory of the functor category $\Fun^{\cI_M}\bigl([1],\sD_M\bigr) \subset \Fun\bigl([1],\sD_M\bigr)$ consisting of the same objects but only those natural transformations by $\cI$.  
We claim the fiber over $(U\hookrightarrow M)$ of the evaluation map is canonically identified as in the sequence
\[
\bigl(\sD_U[\cI_U^{-1}]\bigr)^{\sim} \xra{~(U\hookrightarrow M)~}\bigl(\sD_M[\cI_M^{-1}]\bigr)^{(1)} \xra{~{\sf ev}_1~} \bigl(\sD_M[\cI_M^{-1}]\bigr)^{\sim}~.
\]
This claim is justified through Quillen's Theorem B, for the named fiber is the classifying space of the over $\infty$-category $(\cI_M)_{/(U\hookrightarrow M)}$ which is canonically isomorphic to $\cI_U$.  
To apply Quillen's Theorem B we must show that each morphism $(U\hookrightarrow M) \hookrightarrow (V\hookrightarrow M)$ in $\cI$ induces an equivalence of spaces $\sB\bigl((\cI_M)_{/(U\hookrightarrow M)}\bigr) \simeq \sB\bigl((\cI_M)_{/(V\hookrightarrow M)}\bigr)$.  
This map of spaces is canonically identified as the map $\sB \cI_U \to \sB \cI_V$ induced from the inclusion $U\hookrightarrow V$, which, by design, is a bijection on connected components.  
Through the previous analysis of this proof, this map is further identified as the map of spaces $U\hookrightarrow V$.
The Kister--Mazur Theorem~\ref{kister} implies this inclusion $U\hookrightarrow V$ is isotopic to an isomorphism, from which it follows that the map of spaces $\sB\cI_U \xra{\simeq} \sB\cI_V$ is an equivalence. 
We conclude that Quillen's Theorem B applies. 
(For an $\infty$-categorical account of Quillen's Theorem B, see for instance Theorem~5.3 of~\cite{barwick}.)  
\end{proof}

\begin{remark}
Proposition~\ref{EEd-vs-EE} implies that, for each symmetric monoidal $\infty$-category $\cV$, the restriction functor $\Alg_{\disk_n^B}(\cV) \to \Alg_{\ddisk_n^B}(\cV)$ is fully faithful and the essential image consists of the \emph{locally constant}  $\ddisk_n^B$-algebras.  
This result also appears in~\cite{dag} as Theorem~5.4.5.9.

\end{remark}

Proposition~\ref{EEd-vs-EE} offers the following construction.  
\begin{construction}\label{def.f-inverse}
Let $f\colon M \to N$ be a continuous map from a $B$-framed $n$-manifold to a $B'$-framed $k$-manifold, possibly with boundary.
Given a regularity condition on $f$, we will produce a composite map of colored operads
\[
f^{-1} \colon \ddisk^{\partial, B'}_{k/N} \longrightarrow \dmfld^B_{n/M} \longrightarrow \mfld^B_{n/M}~.
\]
The second functor is the standard one.
To describe the first functor we make use of Lemma~\ref{slice-slice} so that we can assume the maps $B\ra\BTop(n)$ and $B'\ra\BTop(k)$ are equivalences.  
For this case, the first functor is given by $(U\hookrightarrow N)\mapsto (U\underset{N}\times M \hookrightarrow M)$, which is evidently functorial as well as monoidal.

Suppose the two restrictions
\[
f_|\colon f^{-1}(N\smallsetminus \partial N) \to N\smallsetminus \partial N\qquad \text{ and }\qquad f_|\colon f^{-1}(\partial N) \to \partial N
\]
are manifold bundles.
Then, by inspection, this functor $f^{-1}$ carries isotopy equivalences to equivalences.
Through Proposition~\ref{EEd-vs-EE}, there results a multi-functor
\begin{equation}\label{f-inv}
f^{-1} \colon \disk^{B'}_{k/N} \longrightarrow \mfld^B_{n/M}~.
\end{equation}

\end{construction}

\section{Homology theories for topological manifolds}

Factorization homology evaluates on a general manifold as the average 
over `factorizations' of the manifold into disks of the values of an $n$-disk algebra on such disks.
We make this precise by defining factorization homology as the left Kan extension of an $n$-disk algebra along the inclusion $\disk_n \hookrightarrow \mfld_n$.  
\\

\noindent
For this section, we fix a symmetric monoidal $\infty$-category $\cV$.

\subsection{Disk algebras}

\begin{definition} 
The $\infty$-category of $\disk_n^B$-algebras in $\cV$
\[
\Alg_{\disk_n^B}(\cV)~:=~\Fun^\ot(\disk^B_n, \cV)
\]
is the $\infty$-category of symmetric monoidal functors.  

\end{definition}

There is the restricted Yoneda functor
\[
\EE\colon \mfld_n^B \longrightarrow  \Psh(\mfld_n^B) \longrightarrow \Psh(\disk_n^B)~.
\]
\begin{definition}\label{coend}
Let $M$ be a $B$-framed $n$-manifold.
Let $A$ be a $\disk_n^B$-algebra in $\cV$.  
\emph{Factorization homology (of $M$ with coefficients in $A$)} is an object of $\cV$ given by either of the equivalent expressions (provided they exist)
\begin{eqnarray}
\nonumber
\int_M A
&
:=
&
\colim\bigl(\disk_{n/M}^B \to \disk_n^B \xra{A} \cV\bigr)
\\
\nonumber
&
\simeq
&
\EE_M\underset{\disk_M^B} \bigotimes A~,
\end{eqnarray}
where the latter is the coend.

\end{definition}

\begin{remark} The fact that one describes factorization homology as either a coend or a left Kan extension is exactly analogous to a more familiar fact about the geometric realizations of a simplicial set $X_\bullet$: one can think of geometric realization as a coend (this is the usual definition, given as a quotient of $\coprod_i X_i \times \Delta^i$), or one can think of it as a left Kan extension (the colimit of the overcategory of simplices in $X_\bullet$ of the functor which sends the simplicial $i$-simplex $\Delta[i]$ to the topological $i$-simplex $\Delta^i$).  
\end{remark}

We will frequently make the following requirement of our target.
\begin{definition}\label{ast} We say symmetric monoidal $\oo$-category $\cV$ is \emph{$\ot$-presentable} if it satisfies both of the following conditions.
\begin{itemize}
\item $\cV$ is presentable: with respect to an understood fixed uncountable cardinal, $\cV$ admits colimits and every object is a filtered colimit of compact objects.  
\item The monoidal structure distributes over small colimits: for each object $V\in \cV$, the functor $V\ot -\colon \cV\to \cV$ carries colimit diagrams to colimit diagrams.
\end{itemize}
\end{definition}

\begin{example}
The Cartesian monoidal $\oo$-category $(\spaces,\times)$ is $\ot$-presentable. Likewise, for $R$ a ring then $\bigl({\sf Mod}_R, \ot\bigr)$, with tensor product relative $R$, is $\ot$-presentable (though the opposite $\bigl({\sf Mod}_R^{\op}, \ot\bigr)$ is not).  

\end{example}

\begin{remark} The results of Section 3 (in particular, the Eilenberg--Steenrod axioms for factorization homology) only require that the monoidal structure distributes over sifted colimits; these results are established in this generality, though with a smooth structure present, in~\S2 of~\cite{aft2}.  
However, the calculations of Section 4 onwards require the monoidal structure to distribute over all colimits, so for simplicity of exposition we enforce this stronger hypothesis throughout.
\end{remark}

The fully faithful symmetric monoidal functor $\iota\colon \disk_n^B \hookrightarrow \mfld_n^B$ gives the restriction functor
\[
\Alg_{\disk_n^B}(\cV)~\longleftarrow~  \Fun^\ot\bigl(\mfld_n^B,\cV\bigr)\colon \iota^\ast~.
\]
The next result identifies factorization homology as a left adjoint to this functor, provided $\cV$ is $\ot$-presentable.  

\begin{prop}\label{as-LKE}
Provided $\cV$ is $\ot$-presentable, there is a left adjoint
\[
\iota_!\colon \Alg_{\disk_n^B}(\cV)~ \rightleftarrows~ \Fun^\ot\bigl(\mfld_n^B,\cV\bigr)\colon \iota^\ast~,
\]
and its value on $A$ evaluates as
\[
\iota_!(A)\colon M\mapsto \int_M A~.
\]
\end{prop}

\begin{proof} 
Presentability of $\cV$ grants the existence of the values $\int_M A$.    
Lemma 4.3.2.13 of~\cite{topos} states that these expressions define a functor, as indicated.
Proposition 4.3.3.7 of~\cite{topos} states that this functor satisfies the universal property of being a left adjoint to the restriction $\iota^\ast$.
We thus have the solid diagram among $\infty$-categories
\[
\xymatrix{
\Alg_{\disk_n^B}(\cV)  \ar[d] \ar@{-->}[rr]
&&
\Fun^\ot\bigl(\mfld_n^B,\cV\bigr)  \ar[d] 
\\
\Fun(\disk_n^B,\cV)  \ar[rr]^-{\iota_!}
&&
\Fun(\mfld_n^B,\cV)
}
\]
in which the downward functors are restriction to underlying $\infty$-categories, these downward functors are fully faithful.
It remains to explain how the $\ot$-presentability of $\cV$ grants the existence of the dashed horizontal functor making the diagram commute.

We must show that, for each symmetric monoidal functor $A\colon \disk_n^B\to \cV$, and for each based map among finite sets $I_+ \xra{f} J_+$, the diagram of $\infty$-categories 
\[
\xymatrix{
(\mfld_n^B)^I  \ar[d]_-{f_\ast}  \ar[rr]^-{(\iota_! A)^I}
&&
\cV^I  \ar[d]^-{f_\ast}
\\
(\mfld_n^B)^J  \ar[rr]^-{(\iota_! A)^J}
&&
\cV^J
}
\]
commutes.
The map $f \colon I_+ \to J_+$ is canonically a composition of a surjective active map $f^{\sf surj}$ followed by an injective active map $f^{\sf inj}$ followed by an \emph{inert} map $f^{\sf inrt}$, and so it is enough to verify commutativity of the above diagram for each such class of maps.
The case of inert maps is obvious, because then $f_\ast$ is projection and $(\iota_! A)^K$ is defined as the $K$-fold product of functors, for $K=I,J$.
The case of injective active maps amounts to verifying that $\iota_! A$ carries each monoidal unit to a monoidal unit.  This follows because $A$ does so and because the over $\infty$-categories $\disk_{n/\emptyset}^B = \{\emptyset\} = \mfld_{n/\emptyset}^B$ consist solely of the empty manifold, which is the monoidal unity.  

The case of surjective active maps follows from the case that $f\colon I_+ \to \ast_+$ is given by $+\neq i\mapsto \ast$, so that $f_\ast = \bigotimes^I$ is the $I$-fold tensor product.
Well, because $A$ is symmetric monoidal, there is a canonical arrow
$
\iota_! A \circ \bigotimes^I 
\longrightarrow 
\bigotimes^I\circ (\iota_! A)^I
$
between functors $(\mfld_n^B)^I \to \cV$ that we will argue is an equivalence.
This arrow evaluates on $(M_i)_{i\in I}$ as the horizontal one in the following natural diagram in $\cV$:
\[
\xymatrix{
\colim\bigl(\disk^B_{n/\underset{i\in I}\bigsqcup M_i} \to \disk_n^B\xra{A} \cV \bigr)  \ar[r]
&
\underset{i\in I}\bigotimes \colim\bigl(\disk^B_{n/M_i} \to \disk_n^B \xra{A} \cV\bigr) 
\\
\colim\Bigl( \underset{i\in I}\prod \disk^B_{n/M_i} \to (\disk_n^B)^I \xra{A^I} \cV^I\xra{\bigotimes^I} \cV \Bigr) \ar[u]^-{(\ast)}  \ar[ur]_-{(\dagger)}
&
.
}
\]
The arrow labeled by~($\dagger$) is an equivalence precisely because $V\ot - \colon \cV \to \cV$ preserves colimits. 
By inspection, the $I$-fold disjoint union functor $\bigsqcup^I\colon \prod_{i\in I} \disk^B_{n/M_i} \xra{\simeq} \disk^B_{n/\underset{i\in I}\bigsqcup M_i}$ is an equivalence between $\infty$-categories, for it is essentially surjective and fully faithful.
It follows that the arrow labeled by~($\ast$) is an equivalence, after observing the following commutative diagram among $\infty$-categories:
\[
\xymatrix{
\underset{i\in I} \prod \disk^B_{n/M_i}  \ar[rr]^-{\bigsqcup^I}  \ar[d]
&&
\disk^B_{n/\underset{i\in I} \bigsqcup M_i}  \ar[d]
\\
(\disk^B_n)^I  \ar[rr]^-{\bigsqcup^I}  \ar[d]_-{A^I}
&&
\disk^B_n  \ar[d]^-A
\\
\cV^I \ar[rr]^-{\bigotimes^I} 
&&
\cV~.
}
\]

\end{proof}

\begin{remark}
Proposition~\ref{as-LKE} implies factorization homology can be expressed as symmetric monoidal left Kan extension, at least when $\cV$ is $\ot$-presentable. This is equivalent to operadic left Kan extension (after parsing Definitions 3.1.1.2 and 3.1.2.2 of \cite{dag}), which is the definition of factorization homology, or topological chiral homology, given by Lurie (Definition~5.5.2.6).

\end{remark}

The following justifies the notational omission of the space $B$ from the notation $\int_MA$.
\begin{prop}\label{without-B}
Given a map $\varphi:B\ra B'$ of spaces over $\BTop(n)$ and $M$ a $B$-framed $n$-manifold and $A$ a $B'$-framed $n$-disk algebra, composition with the map $\varphi$ defines a $B'$-framed $n$-manifold $\varphi M$, and restriction along $\varphi$ defines a $B$-framed $n$-disk algebra $\varphi A$. There is a natural equivalence
\[\int_{\varphi M} A\simeq\int_{M} \varphi A\] between the $B$-framed and $B'$-framed factorization homologies.
\end{prop}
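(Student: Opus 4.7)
The plan is to reduce both factorization homologies to colimits over slice categories using the preceding Proposition, and then exhibit an equivalence between the relevant slice categories induced by $\varphi$. By that Proposition,
\[
\int_M \varphi A \;\simeq\; \colim\bigl(\disk_{n/M}^B \to \disk_n^B \xra{\varphi A} \cC\bigr)
\quad\text{and}\quad
\int_{\varphi M} A \;\simeq\; \colim\bigl(\disk_{n/\varphi M}^{B'} \to \disk_n^{B'} \xra{A} \cC\bigr).
\]
Postcomposition with $\varphi: B \to B'$ gives a symmetric monoidal functor $\varphi_*:\disk_n^B \to \disk_n^{B'}$ under which, by definition, $\varphi A = A\circ \varphi_*$. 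This functor extends to slices as $\varphi_*: \disk_{n/M}^B \to \disk_{n/\varphi M}^{B'}$, and this extension identifies the two diagrams whose colimits compute the sides of the asserted equivalence. It therefore suffices to prove that $\varphi_*$ on slice categories is an equivalence of $\oo$-categories.

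To verify this, I would unwind the definition of $\mfld_n^B$ as a homotopy limit over $\BTop(n)$. Explicitly, the space of $B$-framed embeddings is the homotopy pullback
\[
\Emb^B(U, M) \;\simeq\; \Emb(U,M) \underset{\Map_{/\BTop(n)}(U,M)}\times \Map_{/B}(U,M),
\]
and likewise for $\Emb^{B'}(U', \varphi M)$. A diagram chase comparing these two pullbacks, using that the $B'$-structure on $\varphi M$ is by definition the composition $M\to B\to B'$, identifies the homotopy fiber of $\varphi_*$ over a point $(U', e':U'\hookrightarrow \varphi M)$ with the space of $B$-framings on $U'$ lifting the given $B'$-framing and agreeing with the composition $U'\hookrightarrow M \to B$ determined by the underlying embedding into $M$. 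Since $U'$ is a disjoint union of Euclidean spaces, the composition $U'\hookrightarrow M \to B$ provides a canonical preferred lift, and the total space of such lifts is contractible on every component.

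Having argued that $\varphi_*$ on slice categories is a fibration of $\oo$-categories with contractible fibers, it is an equivalence. Since $A\circ \iota'\circ \varphi_* \simeq \varphi A\circ \iota$ by construction, the induced map on colimits is an equivalence, yielding the natural equivalence $\int_M \varphi A \simeq \int_{\varphi M} A$.

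The main obstacle will be the rigorous handling of the homotopy coherences in the pullback definition of $\mfld_n^B$ when passing to slice categories, in particular the compatibility of the two iterated pullbacks defining the slices. A cleaner alternative, avoiding slice categories altogether, would be to directly verify the pointwise equivalence of presheaves $\EE_{\varphi M}\simeq (\varphi_*)_!\EE_M$ on $\disk_n^{B'}$ via the same contractible-fiber argument, and then conclude by the bar-construction identity
\[
\EE_{\varphi M}\underset{\disk_n^{B'}}\ot A \;\simeq\; (\varphi_*)_!\EE_M \underset{\disk_n^{B'}}\ot A \;\simeq\; \EE_M\underset{\disk_n^B}\ot(A\circ \varphi_*),
\]
which is the formal reassociation adjunction for coends along $\varphi_*$.
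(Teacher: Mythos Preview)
Your approach is correct and essentially the same as the paper's: both reduce to an equivalence of slice $\oo$-categories of disks over $M$, the key point being that $B$-framing data on an embedded disk becomes contractible once the embedding into the $B$-framed manifold $M$ is fixed. The paper's execution is slightly more economical: rather than directly comparing $\disk_{n/M}^B$ with $\disk_{n/\varphi M}^{B'}$ via a fiber analysis, it shows that the forgetful functor $\disk_{n/M}^B \to \disk_{n/M}$ to the \emph{unframed} slice is an equivalence for any $B$, by invoking the general categorical fact that an iterated slice $(\cX_{/X'})_{/X}$ is equivalent to $\cX_{/X}$---this sidesteps the homotopy-coherence bookkeeping you flagged as the main obstacle.
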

\begin{proof} It suffices to show that the forgetful functor $\disk_{n/M}^B\ra \disk_{n/M}$ is an equivalence. 
By definition, this functor is the projection from the double overcategory:
\[
\disk_{n/M}^B~:=~ (\disk_{n/B})_{/M} \longrightarrow \disk_{n/M}.
\]
This functor is a pullback of the likewise functor $\bigl((\spaces_{/\BTop(n)})_{/B}\bigr)_{/M} \to (\spaces_{/\BTop(n)})_{/M}$, which is an equivalence by Lemma~\ref{slice-slice}.

\end{proof}

\subsection{Factorization homology over oriented $1$-manifolds with boundary}\label{sec.interval}
We show that factorization homology of a closed interval is a two-sided bar construction.

The data of an oriented embedding $U\hookrightarrow [-1,1]$ from a finite disjoint union of oriented intervals, determines a linear ordering of the connected components of $U$.  This is organized as a monoidal functor between $\infty$-operads
\begin{equation}\label{interval-assoc}
\disk^{\partial, {\sf or}}_{1/[-1,1]} \longrightarrow {\sf Assoc}_{\sf RL}
\end{equation}
to the standard multi-category corepresenting the datum of an associative algebra $A$, together with a unital right module $T$ and a unital left module $S$.  
Because the space of oriented embeddings between two oriented intervals is contractible, this functor~(\ref{interval-assoc}) is an equivalence of $\infty$-operads.  
In summary, there is an equivalence of $\infty$-categories
\begin{equation}\label{assoc-disk}
\Alg_{\sf RL}(\cV) \xra{~\simeq~} \Fun^\ot\bigl(\disk_{1/[-1,1]}^{\partial, \sf or}, \cV\bigr)
\end{equation}
where the lefthand $\infty$-category is that of algebras over ${\sf Assoc}_{\sf RL}$.

\begin{example}\label{ex.assoc}
Through~(\ref{assoc-disk}), there is a functor $\Alg^{\sf aug}_{\sf Assoc}(\cV) \to\Fun^\ot\bigl(\disk_{1/[-1,1]}^{\partial, \sf or}, \cV\bigr)$ from augmented associative algebras in $\cV$.  

\end{example}

The next result gives a functor from the simplicial category to 1-disks by a standard construction of counting gaps. Our proof is terse; a lengthier treatment is available in~\S2 of~\cite{aft2}.

\begin{lemma} There is a functor $\bdelta^{\op} \ra \disk^{\partial,{\sf or}}_{1/[-1,1]}$ which is final.
\end{lemma}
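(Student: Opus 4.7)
The plan is to define $F$ explicitly and verify finality via the quasicategorical version of Quillen's Theorem~A, reducing to the contractibility of slice $\oo$-categories in direct parallel with the proof of Lemma~\ref{final}. I would set $F([n])=U_n$, where $U_n$ is the object of $\disk^{\partial,{\sf or}}_{1/[-1,1]}$ consisting of two half-line components attached at the endpoints $\pm1$ together with $n$ copies of $\RR$ embedded disjointly in the interior in linear order; up to a contractible space of embeddings this data is determined by choosing $n+1$ interior cut points $t_0<\cdots<t_n$ in $(-1,1)$. The face map $d_i$ induces a morphism $U_n\to U_{n-1}$ in the overcategory by forgetting the $i$-th cut point and embedding the two adjacent components of $U_n$ disjointly into the merged component of $U_{n-1}$; the extremal face maps $d_0$ and $d_n$ merge the outermost interior interval into the adjacent half-line, and degeneracies are defined dually by duplicating a cut point. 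The simplicial identities are immediate from this combinatorial description.

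By Theorem~4.1.3.1 of \cite{topos}, $F$ is final provided that for every object $V\in\disk^{\partial,{\sf or}}_{1/[-1,1]}$ the slice $\oo$-category
\[
\cC_V \;:=\; \Delta^{\op}\underset{\disk^{\partial,{\sf or}}_{1/[-1,1]}}\times\bigl(\disk^{\partial,{\sf or}}_{1/[-1,1]}\bigr)^{V/}
\]
has weakly contractible classifying space. An object of $\cC_V$ is a pair $([n],\phi)$ with $\phi$ an embedding $V\hookrightarrow U_n$ over $[-1,1]$. Projecting $([n],\phi)$ to the underlying cut-point set gives a forgetful functor $\cC_V\to\sN_V$, where $\sN_V$ is the space of finite configurations $T\subset (-1,1)$ that are disjoint from the interior components of $V$. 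The fibers of this projection are spaces of embeddings of intervals into intervals with fixed combinatorics, which are contractible by the Alexander-trick argument of Proposition~\ref{product}; thus $\sB(\cC_V)\simeq \sN_V$.

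To show $\sN_V$ is contractible, I would identify it, in parallel with the equivalence $\sB((\disk_{k/N})^{V/})\simeq \ran(N)^{\pi_0 V/}$ appearing in the proof of Lemma~\ref{final}, with an under-Ran space $\ran((-1,1))^{\pi_0 V/}$ consisting of finite configurations containing a separating point in each gap between consecutive components of $V$ (together with the boundary endpoints attached to the half-line components of $V$). The contractibility of this under-Ran space is the standard input: the Ran space of the connected interval $(-1,1)$ is weakly contractible, and the under-Ran variant inherits this contractibility from the same explicit contracting homotopy that densely adds and slides configuration points within each connected gap.

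The main obstacle is making precise the identification of $\cC_V$ with the under-Ran space, which requires carefully matching the discrete simplicial indexing $[n]$ against the continuous cut-point configurations and verifying that the resulting map is a weak equivalence of $\oo$-categories. This is the same technical point that underlies the Ran-space comparison in Lemma~\ref{final}, and the template of that proof can be adapted directly to the present setting of $1$-manifolds with boundary embedded in the closed interval.
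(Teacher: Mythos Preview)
Your approach is workable but substantially more elaborate than the paper's. Both arguments define the same functor---your $U_n$'s are exactly the objects of the full subcategory $S\subset\disk^{\partial,{\sf or}}_{1/[-1,1]}$ of embedded 1-disks surjecting onto both endpoints, and the paper identifies $S\simeq\Delta^{\op}$ by sending $U$ to the ordered set $\pi_0([-1,1]\smallsetminus U)$ of gaps---and both invoke Quillen's Theorem~A. The difference is in how the slice categories are shown to be contractible. The paper dispenses with this in one line: for every $V$, the undercategory $S^{V/}$ has an \emph{initial object}, namely $V$ together with whichever of the boundary half-intervals $[-1,0)$ and $(0,1]$ are missing from it. You instead pursue a Ran-space argument modeled on the proof of Lemma~\ref{final}, which is the right tool when the slice categories are genuinely complicated (as for higher-dimensional disks in a general manifold) but is overkill here in dimension~1. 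The ``main obstacle'' you flag---making precise the identification of $\cC_V$ with an under-Ran space---simply evaporates once one notices the initial object. Your route has the virtue of being a direct specialization of the general template; the paper's buys a much shorter and fully self-contained argument.
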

\begin{proof}
Let $S\subset \disk^{\partial,{\sf or}}_{1/[-1,1]}$ be the full $\infty$-subcategory of embeddings from oriented intervals that surject onto the endpoints. 
That is, $S$ consists of oriented embeddings among 1-manifolds with boundary of the form $[-1, 0) \sqcup \RR^{\sqcup \ell} \sqcup (0,1]\hookrightarrow [-1,1]$, for $\ell\geq 0$. To show the inclusion $S\subset \disk^{\partial,{\sf or}}_{1/[-1,1]}$ is final, by Quillen's Theorem A, we can show the under $\infty$-category $S^{U/}$ has a contractible classifying space for every finite disjoint union of subintervals $U\subset[-1,1]$. 
This is immediate, because $S^{U/}$ has an initial object, which is a disjoint union of $U$ with connected open neighborhoods of the endpoints not contained in $U$.

Lastly, the result follows because there is an equivalence $S\to \bdelta^{\op}$.
On objects this is given by assigning to
$(U\hookrightarrow [-1,1])$ the set connected components of the complement $\bigl[ [-1,1]\smallsetminus U\bigr]$ together with the linear order inherited from that of $[-1,1]$.
That this assignment defines a functor is routine.
That this functor is an equivalence of $\infty$-categories follows because each comopnent of the space of morphisms of $\disk^{\partial, \sf or}_{1/[-1,1]}$ is contractible.  

\end{proof}

This has an immediate corollary, which states that factorization homology over a closed interval is a two-sided bar construction.

\begin{cor}\label{interval} 
For $\cV$ a symmetric monoidal $\oo$-category which is $\ot$-presentable, and for $R:  \disk^{\partial,{\sf or}}_{1}\ra \cV$ a symmetric monoidal functor, there is a natural equivalence in $\cV$:
\[
R\bigl([-1,1)\bigr)\underset{R\bigl((-1,1)\bigr)}\bigotimes R\bigl((-1,1]\bigr)  \xra{~\simeq~} \int_{[-1,1]} R~.
\]
\end{cor}

\subsection{Homology theories}
We now give our second main definition of this paper, that of a homology theory. First, note that taking products of manifolds defines a functor $\mfld_{n-1}^B\times\mfld_1^{\sf or} \ra \mfld_n^B$, where $\mfld_1^{\sf or}$ is oriented 1-manifolds and 
\[
\mfld_{n-1}^B : = \mfld_{n-1}\underset{\spaces_{/{\sf BTop(n)}}}\times\spaces_{/B}
\] 
is the $\oo$-category of $(n-1)$-manifolds with a $B$-framing on the product of their tangent bundle product with a trivial line bundle. Consequently, any $B$-framed $n$-manifold of the form $M_0\times\RR$, where $M_0$ is an $(n-1)$-manifold, can be given the structure of a $\disk_1^{\sf or}$-algebra in $\mfld_n^B$, since $\RR$ has the structure of a $\disk_1^{\sf or}$-algebra in $\mfld_1^{\sf or}$.

\begin{definition}[Collar-gluing]\label{def.collar-gluing}
A \emph{collar-gluing among $B$-framed $n$-manifolds} is a continuous map
\[
f\colon M \to [-1,1]
\]
to the closed interval for which the restriction $f_|\colon M_{|(-1,1)}\to (-1,1)$ is a manifold bundle.  
We will often denote a collar-gluing $M\xra{f}[-1,1]$ simply as the open cover
\[
M' \underset{M_0\times \RR}\bigcup M''~\cong~M
\]
where $M'= f^{-1}[-1,1)$ and $M''=f^{-1}(-1,1]$ and $M_0 = f^{-1}\{0\}$.  

\end{definition}

\begin{remark}
We find it useful to think of a collar-gluing as the data of a manifold $M$ together with a codimension-1 properly embedded submanifold $M_0\subset M$ that splits the manifold $M$ into two disconnected parts, $M'$ and $M''$. 
Such data is afforded by gluing two manifolds with boundary along a common boundary.  
The actual data of a collar-gluing specifies that named just above, in addition to a bi-collaring $M_0\times \RR\hookrightarrow M$ of $M_0\subset M$.
\end{remark}

Construction~\ref{def.f-inverse} offers, for each collar-gluing $M\xra{f} [-1,1]$ among $B$-framed $n$-manifolds, a monoidal functor
\[
f^{-1}\colon \disk^{\partial,\sf or}_{1/[-1,1]} \longrightarrow \mfld^B_{n/M}~.
\]
In particular, for each symmetric monoidal functor $\mfld^B_n \xra{\cF} \cV$ with $\ot$-presentable codomain, there is a canonical morphism in $\cV$:
\begin{equation}\label{pre-excision}
\cF(M')\underset{\cF(M_0\times\RR)}\bigotimes\cF(M'')~\underset{\rm Cor~\ref{interval}}\simeq~\int_{[-1,1]} \cF\circ f^{-1}~\longrightarrow~ \cF(M)~.
\end{equation}

\begin{definition}\label{exc} 
A symmetric monoidal functor $\cF:\mfld_n^{B}\ra \cV$ satisfies \emph{$\ot$-excision} if, for each collar-gluing $M' \underset{M_0\times \RR}\bigcup M''\cong M$ among $B$-framed $n$-manifolds, the canonical morphism~(\ref{pre-excision}) 
\[
\cF(M')\underset{\cF(M_0\times\RR)}\bigotimes\cF(M'')\xra{~\simeq~}\cF(M)
\] 
is an equivalence in $\cV$.
The $\oo$-category of \emph{homology theories} for $B$-framed $n$-manifolds valued in $\cV$ is the full $\oo$-subcategory 
\[
\bH(\mfld_n^{B},\cV)~\subset~\Fun^\ot(\mfld^B_n, \cV)
\]
consisting of those symmetric monoidal functors that satisfy $\ot$-excision.
\end{definition}

\begin{remark} The behavior of a homology theory with coefficients in $\cV$ depends critically on the symmetric monoidal structure chosen on $\cV$. For instance, for the symmetric monoidal $\infty$-category $(\m_k,\oplus)$ of $k$-modules over a fixed field $k$ with direct sum, a homology theory is forced to be ordinary homology with coefficients in $k$; while for $(\m_k,\ot)$, a homology theories is typically \emph{not} a homotopy invariant of manifolds. \end{remark}

\begin{remark} One can complete the $\oo$-category $\mfld_n$ as follows: first, formally adjoin, for every collar-gluing $M'\underset{M_0\times\RR}\bigcup M''\cong M$, the colimit of the simplicial object ${\sf Bar}_\bullet(M',M_0\times\RR,M'')$; second, Dwyer-Kan localize by forcing the natural map from this new object $|{\sf Bar}_\bullet(M',M_0\times\RR,M'')| \ra M$ to be an equivalence. Denote this completion of the $\oo$-category of manifolds as $\widehat{\mfld}_n$. The completion functor $\mfld_n \ra \widehat{\mfld}_n$ is the universal homology theory: that is, we now have the suggestive equivalence 
\[
\int_M \RR^n~ \simeq ~M
\] 
as objects of $\widehat\mfld_n$ (the lefthand side is not defined in $\mfld_n$). By this universal property, a $\ot$-excisive functor $\mfld_n\ra \cV$ is equivalent to a symmetric monoidal functor $\widehat{\mfld}_n \ra \cV$ that preserves geometric realizations of simplicial objects. \end{remark}

\subsection{Pushforward}\label{sec.push}
We prove that factorization homology satisfies $\ot$-excision.
We do this as an instance of a general paradigm: pushforward.  

The following technical lemma is the crux of the later results of this paper. 
An earlier treatment, not in terms of the pushforward, is in \cite{cotangent}; a generalization of this result to structured stratified spaces is given in \S2 of~\cite{aft2}.
We state the the lemma now, and prove it at the end of this section.

\begin{lemma}\label{excision} For $\cV$ a symmetric monoidal $\oo$-category which is $\ot$-presentable, factorization homology valued in $\cV$ satisfies $\ot$-excision: for any $\disk_n^B$-algebra $A$ in $\cV$, and for any collar-gluing $M'\underset{M_0 \times \RR}\bigcup M''\cong M$ among $B$-framed $n$-manifolds, the canonical morphism in $\cV$
\[
 \int_{M'}A\bigotimes_{\displaystyle\int_{M_0\times\RR}A}\int_{M''}A\xra{~\simeq~} \int_MA
\] 
is an equivalence.
\end{lemma}

We give the following $n=1$ example to indicate the utility of $\ot$-excision as well as some intuition about how factorization homology behaves. 
See~\cite{dag} for a different proof of the following result.

\begin{theorem} 
For an associative algebra $A$ in a symmetric monoidal $\oo$-category $\cV$ which is $\ot$-presentable, there is an equivalence 
\[
\int_{S^1}A~\simeq~ \hh_*(A)
\] 
between the factorization homology of the circle with coefficients in $A$ and the Hochschild complex of $A$.

\end{theorem}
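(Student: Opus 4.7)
The plan is to apply $\otimes$-excision (Lemma \ref{excision}) to a simple two-chart decomposition of the circle. Write $S^1 \cong M' \cup_{M_0 \times \RR} M''$ where $M_0 = S^0$ consists of two points, so that $M_0 \times \RR \cong \RR \sqcup \RR$, and where $M' \cong M'' \cong \RR$ are two open arcs covering $S^1$. Concretely, one may take $M'$ and $M''$ to be two complementary open arcs meeting in a pair of small collar intervals near each endpoint; orienting everything compatibly with a fixed orientation of $S^1$ presents the arcs as oriented $1$-disks and the collar as an oriented pair of disks.

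The individual pieces are easy to identify. Since $A$ is a $\disk_1^{\sf or}$-algebra, one has $\int_{\RR} A \simeq A(\RR) = A$, and the symmetric monoidality of $\int(-)A$ together with the disjoint union axiom gives $\int_{\RR \sqcup \RR} A \simeq A \otimes A$. Substituting into $\otimes$-excision therefore yields
\[\int_{S^1} A \;\simeq\; A \underset{A \otimes A}\bigotimes A,\]
where the two $A$'s on the right carry module structures over the algebra $\int_{\RR \sqcup \RR} A \simeq A \otimes A$ induced by the two embeddings $\RR \sqcup \RR \hookrightarrow M'$ and $\RR \sqcup \RR \hookrightarrow M''$.

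It remains to identify these bimodule structures and match the resulting relative tensor product with the standard presentation $\hh_*(A) \simeq A \otimes_{A \otimes A^{\op}} A$. The crucial point is that the two components of the collar $\RR \sqcup \RR$ embed into $M' \cong \RR$ in one cyclic order (say, ``left then right''), whereas in $M'' \cong \RR$ they embed in the opposite cyclic order, since they sit on opposite sides of the circle. Thus on $M'$ the first factor of $A \otimes A$ acts on the left and the second on the right, while on $M''$ the roles are exchanged. Equivalently, one of the two module structures is twisted by the swap involution of $A \otimes A$, which converts $A \otimes A$ into $A \otimes A^{\op}$ acting diagonally, and the coequalizer reproduces the usual cyclic relation $am \otimes n \sim m \otimes na$. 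Hence $A \otimes_{A \otimes A} A \simeq A \otimes_{A \otimes A^{\op}} A = \hh_*(A)$.

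The main obstacle is precisely the bookkeeping in this last paragraph: verifying that the orientations around the circle force one of the two bimodule structures to be the opposite of the other, rather than both the same. Once this is checked, the theorem falls directly out of the already-established $\otimes$-excision; alternatively, one could bypass the ``twist'' bookkeeping by iterating excision on a finer decomposition of $S^1$ into $k$ arcs to produce the full cyclic bar simplicial object $[k] \mapsto A^{\otimes (k+1)}$ and recognize its geometric realization as $\hh_*(A)$.
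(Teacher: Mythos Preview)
Your argument is correct and is exactly the paper's approach: apply $\otimes$-excision to the decomposition $S^1 \cong \RR \cup_{S^0\times\RR} \RR$ and identify the resulting relative tensor product $A\otimes_{A\otimes A^{\op}}A$ with $\hh_*(A)$. If anything, you supply more detail than the paper does on why the orientation reversal forces the $A\otimes A^{\op}$-module structure, which the paper simply asserts in its one-line chain of equivalences.
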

\begin{proof} 
Regard the associative algebra $A$ as a symmetric monoidal functor $A\colon \disk^{\sf or}_1 \to \cV$, as in Section~\ref{sec.interval}.
Consider the standard collar-gluing $\RR\underset{\RR\sqcup \RR}\bigcup \RR \cong S^1$ by hemispheres.
Lemma~\ref{excision}, which states that factorization homology staisfies $\ot$-excision, determines the first of the equivalences in the expression:
\[\int_{S^1}A
~\simeq~ \int_{\RR}A\underset{{\displaystyle\int_{{S^0\times\RR}}\! A}}\bigotimes\int_{\RR}A~\simeq~ A\underset{A\ot A^{\op}}\bigotimes A~\simeq~ \hh_*(A)~. \]
The second equivalence is by inspecting values, and the final equivalence is definitional. 

\end{proof}

The next definition makes use of the multi-functor $f^{-1} \colon \disk^{\partial, \sf or}_{k/N} \to \mfld^B_{n/M}$ of Construction~\ref{def.f-inverse} associated to each continuous map $M \ra N$ for which each of the restrictions, $M_{|N\smallsetminus \partial N} \to N\smallsetminus \partial N$ and $M_{|\partial N} \to \partial N$, are manifold bundles.  
\begin{definition}\label{disk-f} Let $M$ be an $B$-framed $n$-manifold, and let $N$ be an oriented $k$-manifold, possibly with boundary. For $f:M\ra N$ a map such that the restrictions of $f$ over each of the interior of $N$ and of the boundary of $N$ is a fiber bundle, the $\oo$-category $\disk_f$ is the limit of the diagram among $\infty$-categories
\[
\xymatrix{
\disk^B_{n/M}\ar[dr]&&\ar[ld]^{{\sf ev}_0}{\sf Ar}(\mfld^B_{n/M})\ar[rd]_{{\sf ev}_1}&&\ar[dl]^{f^{-1}}\disk_{k/N}^{\partial,{\sf or}}\\
&\mfld^B_{n/M}&&\mfld^B_{n/M}
}
\] 
where ${\sf Ar}(\mfld^B_{n/M})$ is the $\oo$-category of functors $[1]\to \mfld^B_{n/M}$. 
\end{definition}

Informally, $\disk_f$ consists of compatible triples $(V,U, V\hookrightarrow f^{-1}U)$ such that: $U$ is an open submanifold of $N$ that is homeomorphic to a disjoint union of Euclidean spaces; $V$ is an open submanifold of $M$ that is homeomorphic to a disjoint union of Euclidean spaces; the embedding $V\hookrightarrow f^{-1}U$ is compatible with the embeddings $f^{-1}U\hookrightarrow M$ and $V\hookrightarrow M$. The relevance of the $\oo$-category $\disk_f$ is the following technical result.

\begin{lemma}\label{final} In the situation of Definition~\ref{disk-f}, the functor ${\sf ev}_0:\disk_f \ra \disk_{n/M}^B$ is final.
\end{lemma}

\begin{proof} 
After Lemma~\ref{slice-slice}, it will suffice to prove the result for the case $B = \BTop(n)$, and so we omit $B$ from the notation and discussion.  
The functor ${\sf ev}_0$ is a Cartesian fibration of $\oo$-categories. Thus, to check finality, by Lemma 4.1.3.2 of \cite{topos}, it suffices to show that, for each $(V\hookrightarrow M)\in \disk_{n/M}$, the fiber $\infty$-category ${\sf ev}_0^{-1}V$ has contractible classifying space. 
That is, we show that the $\oo$-category $(\disk_{k/N})^{V/}$, of $k$-disks $U$ in $N$ equipped with an embedding $V\hookrightarrow f^{-1}U$, has a contractible classifying space. 

There is an identification of spaces
\[
\sB\bigl(\disk_{k/N}\bigr)^{U/}~{}~\simeq~{}~\underset{(V\hookrightarrow N)\in \disk_{k/N}}\colim ~\Map_{\mfld_{n/M}}(U,f^{-1}V)~.  
\]
Formally, the sequence of maps
\[
\Map_{\mfld_{n/M}}(U,f^{-1}V)~\longrightarrow~\Map_{\mfld_n}(U,f^{-1}V)~\longrightarrow~\Map_{\mfld_n}(U,M)
\]
is a fiber sequence (here the fiber is taken over any implicit morphism $U\hookrightarrow M$, thereby giving meaning to the lefthand space).  
So we seek to show the map from the colimit
\[
\underset{(V\hookrightarrow N)\in \disk_{k/N}}\colim \Map_{\mfld_n}(U,f^{-1}V)~{}~\longrightarrow~{}~ \Map_{\mfld_n}(U,M)
\]
is an equivalence of spaces. 
We recognize this map of spaces as the map of fibers over $U\in \disk_n$ of the map of right fibrations over $\disk_n$:
\[
\underset{(V\hookrightarrow N)\in \disk_{k/N}}\colim \disk_{n/f^{-1}V} \longrightarrow \disk_{n/M}~.
\]
Being right fibrations, it is enough to show that this functor is an equivalence on maximal $\infty$-subgroupoids.
Using Lemma~\ref{EE-equivs} which identifies these maximal $\infty$-subgroupoids, this is the problem of showing, for each finite set $J$, that the map of spaces
\[
\underset{(V\hookrightarrow N)\in \disk_{k/N}}\colim \conf_J\bigl(f^{-1}V\bigr)_{\Sigma_J} ~\longrightarrow~ \conf_J(M)_{\Sigma_J}
\]
is an equivalence.

Lemma~\ref{EEd-vs-EE} implies the functor $\ddisk_{k/N} \to \disk_{k/N}$ is final, and so the forgetful map
\[
\underset{(V\hookrightarrow N)\in \ddisk_{k/N}} \colim \conf_J\bigl(f^{-1}V\bigr)_{\Sigma_J} 
~\xra{~\simeq~}~
\underset{(V\hookrightarrow N)\in \disk_{k/N}}\colim \conf_J\bigl(f^{-1}V\bigr)_{\Sigma_J} 
\]
is an equivalence of spaces.  
Now notice that, for each $(V\hookrightarrow N)\in \ddisk_{k/N}$, the map $\conf_J(f^{-1}V) \to \conf_J(M)$ an open embedding.  Also, for each point $c\colon J\hookrightarrow M$ the image $f\bigl(c(J)\bigr)\subset N$ has cardinality at most $J$.  So there is an object $(V\hookrightarrow N)$ of $\ddisk_{k/N}$ whose image contains the subset $f\bigl(c(J)\bigr)$.  We see then that the collection of open embeddings
\[
\Bigl\{ \conf_J(f^{-1}V)_{\Sigma_J} \hookrightarrow \conf_J(M)_{\Sigma_I}\mid (V\hookrightarrow N)\in \ddisk_{k/N} \Bigr\}
\]
forms an open cover.

Because $N$ is a manifold, the collection of open embeddings from Euclidean spaces into $N$ form a basis for the topology of $N$.  
It follows that the collection of (at most) $|J|$-tuples of disjoint open disks in $N$ forms an open cover of $N$ in such a way that any finite intersection of such is again covered by such.  
This is to say that this collection of open embeddings forms a hypercover of $\conf_J(M)_{\Sigma_J}$.
Corollary 1.6 of~\cite{dugger-isaksen} gives that the map
\[
\underset{(V\hookrightarrow N)\in \ddisk_{k/N}}\colim \conf_J\bigl(f^{-1}V\bigr)_{\Sigma_J} 
~\xra{~\simeq~}~
\conf_J(M)_{\Sigma_J}
\]
is an equivalence of spaces, which completes the proof.

\end{proof}

Here is an important technical property of the $\infty$-category of disks over a manifold.  (See also Proposition~5.5.2.16 of~\cite{dag}.)
\begin{cor}\label{disk-sifted}
For $M$ a $B$-framed $n$-manifold, the $\oo$-category $\disk_{n/M}^B$ is sifted.
\end{cor}
\begin{proof}
The $\oo$-category $\disk_{n/M}^B$ is evidently nonempty, as it contains the object $(\emptyset \hookrightarrow M)$. 
We must then prove that the diagonal functor $\disk_{n/M}^B \to \disk_{n/M}^B\times \disk_{n/M}^B$ is final.
This diagonal functor fits into a diagram among $\infty$-categories
\[
\xymatrix{
\disk_{\nabla} \ar[rr]^-{{\sf ev}_0}  \ar[d]_-{{\sf ev}_1}
&&
\disk_{n/M\sqcup M}^B  
\\
\disk_{n/M}^B  \ar[rr]^-{\sf diag}
&&
\disk_{n/M}^B\times \disk_{n/M}^B    \ar[u]^-{\simeq}_{\sqcup}
}
\]
that we now explain.
The upper left $\infty$-category is that of Definition~\ref{disk-f} applied to the fold map $\nabla \colon M \sqcup M \to M$; it is equipped with the indicated projection functors.
The right vertical arrow is induced by the symmetric monoidal structure on $\mfld_n^B$, which is disjoint union.  
This right vertical arrow is an equivalence; an inverse is given by declaring its projection to each factor to be given by intersecting with the corresponding cofactor of the disjoint union.
Therefore, to prove that the diagonal functor is final it is sufficient to prove that both of the projection functors ${\sf ev}_1$ and ${\sf ev}_0$ are final.  
The finality of ${\sf ev}_0$ is Lemma~\ref{final}.

We explain that ${\sf ev}_1$ is final.
Note that the functor $\nabla^{-1} \colon \disk_{n/M}^B \to \mfld_{n/M\sqcup M}^B$ factors through the full $\infty$-subcategory $\disk_{n/M\sqcup M}^B$.  
As so, there is a canonical identification between $\infty$-categories
\[
\disk_\nabla~\simeq~ \disk_{n/M}^B\underset{\disk_{n/M\sqcup M}^B}\times  {\sf Ar}(\disk_{n/M\sqcup M}^B)
\]
over $\disk_{n/M}^B$.
Through this identification, the composite functor 
\[
\disk_{n/M}^B \xra{~\nabla~} \disk_{n/M\sqcup M}^B \xra{~\sf const~} {\sf Ar}(\disk_{n/M\sqcup M}^B)
\]
determines a right adjoint to the functor ${\sf ev}_1$.  
The finality of ${\sf ev}_1$ thereby follows.

\end{proof}

The pushforward property for factorization homology immediately follows from Lemma~\ref{final}, as the next result articulates. We will use the notation
\[
f_\ast A:\xymatrix{\disk_{k/N}^{\partial,{\sf or}}\ar[r]^{f^{-1}}& \mfld_{n/M}^B\ar[r]^-{\int A}& \cV}
\] 
for the composite functor, where $f^{-1}$ is as in Construction~\ref{def.f-inverse}.

\begin{prop}\label{pushforward} Let $M$ be a $B$-framed $n$-manifold, $N$ an oriented $k$-manifold, possibly with boundary, and $f:M\ra N$ a map which fibers over the interior and boundary of $N$. For $A$ a $B$-framed $n$-disk algebra in $\cV$, a symmetric monoidal $\oo$-category which is $\ot$-presentable, then the canonical morphism in $\cV$ \[\int_{N}f_\ast A\xra{~\simeq~}
\int_MA\] is an equivalence.
\end{prop}

\begin{proof} 
After Proposition~\ref{without-B}, we can assume that $B$ is equivalent to $\BTop(n)$, and so we omit it from the notation and discussion.
We will explain the string of canonical equivalences in $\cV$:
\begin{eqnarray}
\nonumber
\int_N f_\ast A
&
\underset{\rm Def~\ref{coend}}\simeq
&
\colim_{U\in\disk_{k/N}^{\partial,{\sf or}}}f_\ast A(U)
\\
\nonumber
&
\underset{{\rm Def~}f_\ast}{\simeq}
&
\colim_{U\in\disk_{k/N}^{\partial,{\sf or}}}\int_{f^{-1}U}A
\\
\nonumber
&
\underset{\rm Def~\ref{coend}}{\simeq}
&
\colim_{U\in\disk_{k/N}^{\partial,{\sf or}}}\colim_{V\in \disk_{n/f^{-1}U}}A(V)
\\
\nonumber
&
\underset{(1)}{\simeq}
&
\colim_{(U,V)\in \disk_f} A(V)
\\
\nonumber
&
\underset{(2)} {\xra{\simeq}}
&
\colim_{V\in \disk_{n/M}} A(V)
\\
\nonumber
&
\underset{\rm Def~\ref{coend}}\simeq
&
\int_M A~.
\end{eqnarray}
The only equivalences that are not definitional are~(1) and~(2).  
The equivalence~(2) is a direct application of Lemma~\ref{final}, which states that the functor ${\sf ev}_0\colon \disk_f \to \disk_{n/M}$ is final.  
Consider the left Kan extension (non-commutative) diagram among $\infty$-categories:
\[
\xymatrix{
\disk_f  \ar[r]  \ar[d]^-{{\sf ev}_1}  \ar[r]^-{{\sf ev}_0}
&
\disk_{n/M}  \ar[r]
&
\disk_n  \ar[r]^-{A}
&
\cV
\\
\disk^{\partial,\sf or}_{k/N}   \ar[urrr]_-{\sf LKan}
&&&
,
}
\]
which exists because $\cV$ is presentable.  
By construction, the functor ${\sf ev}_1\colon \disk_f \to \disk^{\partial, \sf or}_{k/N}$ is a coCartesian fibration.  
In particular, for each object $U\in \disk^{\partial, \sf or}_{k/N}$, the inclusion of the fiber into the over $\infty$-category
\[
{\sf ev}_1^{-1}(U) \longrightarrow (\disk_f)_{/U}
\]
is final.  
Therefore, the value of ${\sf LKan}$ on $U\in \disk^{\partial, \sf or}_{k/N}$ is the colimit over the fiber: 
\[
\colim_{V\in \disk_{n/f^{-1}U}} A(V)
~\underset{\rm Def~\ref{disk-f}}\simeq~
\colim_{V\in {\sf ev}_1^{-1}U} A(V)
\xra{~\simeq~}
{\sf LKan}(U)~.
\]
So the colimit of ${\sf LKan}$ is the codomain of~(1).
The equivalence~(1) follows from Proposition~4.3.3.7 of~\cite{topos}, which implies the colimit of a left Kan extension agrees with the colimit because they both satisfy the same universal property.  
 
\end{proof}

\begin{proof}[Proof of Lemma~\ref{excision}] 
Given a collar-gluing $f:M\ra [-1,1]$, there are canonical morphisms in $\cV$ \[\int_{M'}A\bigotimes_{\displaystyle\int_{M_0\times\RR}A}\int_{M''}A\longrightarrow \int_{[-1,1]} f_\ast A\longrightarrow \int_M A~~. \]
The lefthand morphism is an equivalence by Lemma \ref{interval}, which shows that factorization homology over the closed interval is equivalent to the bar construction, and inspection of the functor $f_\ast A$.  
The righthand morphism is an equivalence by Proposition \ref{pushforward}, which grants that factorization homology pushes forward along $M\xra{f}[-1,1]$.  

\end{proof}

\subsection{Homology theories}
We give the following characterization, \`a la Eilenberg--Steenrod, for factorization homology; this is the central conceptual result of this paper.

\begin{theorem}\label{homology} For $\cV$ a symmetric monoidal $\oo$-category which is $\ot$-presentable, there is an equivalence
\[\xymatrix{
\displaystyle\int: \Alg_{\disk_n^B}(\cV)\ar@<-.5ex>[r] &\bH(\mfld_n^B, \cV):{\sf ev}_{\RR^n}\ar@<-.5ex>[l]\\}\]
between $\disk^B_n$-algebras in $\cV$ and homology theories of $B$-framed $n$-manifolds with coefficients in $\cV$. This equivalence is implemented by the factorization homology functor $\int$ and the functor of evaluation on $\RR^n$.

\end{theorem}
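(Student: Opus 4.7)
The plan is to verify that $\int$ and ${\sf ev}_{\RR^n}$ are well-defined and mutually inverse. First, well-definedness: the left Kan extension $\iota_! A$ is symmetric monoidal since the inclusion $\iota: \disk_n^B \hookrightarrow \mfld_n^B$ is symmetric monoidal and $\cC$ satisfies the $\ot$-colimit condition, and $\iota_! A$ is $\ot$-excisive precisely by Lemma \ref{excision}; conversely, restriction of any symmetric monoidal functor to $\disk_n^B$ is tautologically symmetric monoidal, so ${\sf ev}_{\RR^n}$ lands in $\disk_n^B$-algebras. Second, the easy composite ${\sf ev}_{\RR^n} \circ \int$ is naturally equivalent to the identity because $\iota$ is fully faithful, so $\iota^\ast \iota_! \simeq {\sf id}$ on $\disk^B_n$-algebras.

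The crux is the reverse composite: given a homology theory $\cF$ with restriction $A := \cF|_{\disk_n^B}$, the comparison map $\int_M A \to \cF(M)$, coming from the universal property of the left Kan extension, must be shown to be an equivalence for every $B$-framed $n$-manifold $M$. My approach is induction on the minimal cardinality of a finite good cover of $M$ by Euclidean spaces, which exists by definition of $\mfld_n^B$. The base case $M \cong \coprod_I \RR^n$ follows from symmetric monoidality, as both sides reduce to $\bigotimes_I A(\RR^n) \simeq \bigotimes_I \cF(\RR^n)$. For the inductive step, given a good cover $\{U_1, \ldots, U_k\}$, decompose $M = M' \cup U_k$ with $M' := U_1 \cup \cdots \cup U_{k-1}$; after thickening the overlap $M' \cap U_k$ to a collar of the form $M_0 \times \RR$, with $M_0$ a disjoint union of copies of $\RR^{n-1}$, apply $\ot$-excision simultaneously to $\int A$ and to $\cF$, then invoke the inductive hypothesis on $M'$, $U_k$, and the overlap, each of which has a good cover of size strictly less than $k$. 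The induced map on relative tensor products is then an equivalence.

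The main obstacle is ensuring that the overlaps in a good cover can genuinely be realized as collars of the form $M_0 \times \RR$ compatibly with the $B$-framing, since $\ot$-excision requires this specific geometric shape rather than an arbitrary disk-gluing. I would handle this either by a careful engineered choice of good cover using topological collaring techniques (contractibility of each chart makes the $B$-framing essentially unique on disks, so collaring can be performed chart by chart), or by the more categorical route of exhibiting each $M$ as an iterated collar pushout of disks in the completion $\widehat{\mfld}_n^B$ described in the remark following Definition \ref{exc}, and then invoking the universal property of that completion to conclude that the values of $\int A$ and of $\cF$ agree on $M$.
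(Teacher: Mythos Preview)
Your setup is correct: the unit of the Kan extension adjunction is an equivalence by full faithfulness of $\iota$, and Lemma \ref{excision} gives well-definedness of $\int$. The gap is in the inductive step, and you have located it yourself but not closed it.

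The assertion that the overlap $M'\cap U_k$ can be arranged as a collar $M_0\times\RR$ with $M_0$ a disjoint union of copies of $\RR^{n-1}$ is already false for $S^2$ with a tetrahedral good cover: the overlap with the last chart is an annulus, so $M_0\cong S^1$. More seriously, for a general good cover there is no reason the overlap $M'\cap U_k$, which is merely an open subset of $U_k\cong\RR^n$, should be of the form $M_0\times\RR$ at all, and shrinking or ``thickening'' does not fix this. Producing covers whose successive overlaps are genuine collars is essentially the same problem as producing a handle decomposition; for \emph{topological} manifolds this is a deep theorem (Kirby--Siebenmann for $n>5$, Quinn for $n=5$, Moise for $n\leq 3$) and is outright false in dimension $4$. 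The paper's proof confronts this directly: it first establishes the result for thickened spheres $S^k\times\RR^{n-k}$ by a separate induction on $k$ (decomposing along the equator), then inducts on a handle decomposition via $M\cong M_0\cup_{S^q\times\RR^{n-q}}\RR^n$, and finally treats dimension $4$ by deleting a point so that the complement is smoothable and hence handle-decomposable, then reattaching along $S^{n-1}\times\RR$. Your ``topological collaring techniques'' would have to reproduce exactly this input.

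The alternative through $\widehat{\mfld}_n^B$ is circular: exhibiting an arbitrary $M$ as an iterated collar pushout of disks in that completion is precisely the statement that $M\simeq\int_M\RR^n$ there, which is equivalent to what you are trying to prove and again rests on handlebody theory for its verification.
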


\begin{proof}
Proposition~\ref{as-LKE} recognizes factorization homology as a symmetric monoidal left Kan extensions, thereby implementing the left adjoint in an adjunction 
\[
i_!\colon \Alg_{\disk_n^B}(\cV) \rightleftarrows  \Fun^\ot\bigl( \mfld_n^B, \cV\bigr)\colon i^\ast~.
\]
The unit of this adjunction is an equivalence because $\disk_n^B\ra \mfld_n^B$ is fully faithful, and the Kan extension along a fully faithful functor restricts as the original functor.  
The counit of this adjunction evaluates on a symmetric monoidal functor $\cF$ as a morphism $\int\! A \ra \cF$, where $A=\cF_{|\RR^n}$ is the $\disk^B_n$-algebra defined by the values of $\cF$ on disjoint unions of $B$-framed Euclidean $n$-spaces. 
It remains to verify that this counit is an equivalence.

Since both $\cF$ and $\int\! A$ are symmetric monoidal and agree on $\RR^n$, the map $\int_MA\ra \cF(M)$ is an equivalence for $M$ isomorphic to a disjoint union of Euclidean spaces, $M\cong \bigsqcup_I \RR^n$. 
Using induction, we will now see that the values of $\cF$ and $\int\! A$ agree on thickened spheres $S^k\times \RR^{n-k}$, the base case of $k=0$ just having been shown. In the inductive step, assume the result for $S^{i-1}\times\RR^{n-i+1}$. 
Choose a standard collar-gluing $S^i \xra{f} [-1,1]$ with $S^{i-1} =f^{-1}(0)\subset S^i$ an equator.  
There results a collar-gluing of $S^i\times\RR^{n-i}$.
For $\cF$ a homology theory, we obtain the equivalence $\int_{S^i\times\RR^{n-i}}A\simeq \cF(S^i\times\RR^{n-i})$ via the intermediate equvialences 
{\Small
\[\underset{S^i\times\RR^{n-i}}\int\! A \simeq \underset{\RR_{-1}^i\times\RR^{n-i}}\int\! A\underset{\underset{S^{i-1}\times\RR^{n-i+1}}\int \negthinspace A}\bigotimes \ \underset{\RR_{+1}^i\times\RR^{n-i}} \int\! A \ \simeq \ \cF(\RR_{-1}^{i}\times\RR^{n-i})\underset{\cF(S^{i-1}\times\RR^{n-i+1})}\bigotimes \cF(\RR_{+1}^{i}\times\RR^{n-i})\simeq \cF(S^i\times\RR^j)
\]
}

\noindent where the first equivalence is by the $\ot$-excision property of factorization homology, the last equivalence is by the assumption that $\cF$ is a homology theory, and the middle equivalence is by induction.  

We now restrict to the case of $B$-framed $n$-manifolds where $n$ is not equal to 4. By the handlebody theory for topological manifolds (\cite{kirbysieb} for $n>5$, \cite{quinn} for $n=5$, and \cite{moise} for $n=3$) all such manifolds admits a handle decomposition. We now prove the result outside dimension 4 by induction on the handle decomposition. The base case is assured. To verify the inductive step, let $M$ be obtained from $M_0$ by adding a handle of index $q+1$. Therefore $M$ can be expressed as a collar-gluing $M\cong M_0  \underset{S^q\times\RR^{n-q}}\bigcup \RR^n$, where $\RR^n$ is an open neighborhood of the $(q+1)$-handle in $M$. The values $\cF$ and $\int\! A$ agree on the three constituent submanifolds of $M$, and they both satisfy $\ot$-excision, so the values $\cF(M)\simeq \int_M A$ are equivalent.

This leaves the case of topological 4-manifolds, which do not admit handle decompositions in general. 
Because both $\cF$ and $\int_{\!} A$ are symmetric monoidal, we can reduce to the case that $M$ is connected.  Now, any connected topological 4-manifold $M$ admits a smooth structure on the complement $M\smallsetminus \{x\}$ of a point $x\in M$, \cite{quinn}. Consequently, $M\smallsetminus \{x\}$ admits a handle decomposition, which can be constructed from any Morse function on $M\smallsetminus\{x\}$, and the preceding argument thereby implies the equivalence $\cF(M\smallsetminus \{x\})\simeq \int_{M\smallsetminus \{x\}}A$. Applying the $\ot$-excision property to the collar-gluing $M\smallsetminus \{x\}\underset{S^{n-1}\times\RR}\bigcup \RR^n\cong M$, since $\cF$ and $\int\! A$ agree on the constituent submanifolds, we obtain the equivalence $\cF(M)\simeq \int_M A$. Therefore every homology theory $\cF$ for $n$-manifolds is equivalent to factorization homology with coefficients in $\cF(\RR^n)$.

\end{proof}

We record here this technical comparison of factorization homology with different target $\oo$-categories. (This result is also Proposition~5.5.2.17 of~\cite{dag}.)
\begin{lemma}\label{switch} For $G: \cV\ra\cV'$ a symmetric monoidal functor between $\ot$-presentable $\infty$-categories whose restriction to underlying $\infty$-categories preserves geometric realizations, there is a canonical equivalence $\int \circ G \xra{\simeq} G\circ \int$ of functors $\Alg_{\disk_n^B}(\cV) \ra \Fun^\ot(\mfld_n^B,\cV')$.
\end{lemma}
\begin{proof} The two functors agree on disjoint unions of Euclidean spaces, so it suffices to check that $G\int A$ is a homology theory with values in $\cV'$.
This is immediate by the assumption on $G$.  
\end{proof}

A result identical to Theorem \ref{homology} holds for topological $n$-manifolds with boundary.

\begin{theorem} For $\cV$ a symmetric monoidal $\oo$-category which is $\ot$-presentable, and for $B\to \BTop(n)$ a map of spaces, there is an equivalence between $\infty$-categories
\[\xymatrix{
\displaystyle\int: \Alg_{\disk_n^{\partial, B}}(\cV)\ar@<-.5ex>[r] &\bH(\mfld_n^{\partial, B}, \cV):{\sf ev}_{\RR^n, \RR^{n-1}\times[0,1)}\ar@<-.5ex>[l]\\}\]
from $\disk^{\partial, B}_n$-algebras in $\cV$ and homology theories of $B$-framed $n$-manifolds with coefficients in $\cV$. This equivalence is implemented by the factorization homology functor $\int$ and evaluation on $B$-framed Euclidean $n$-spaces and half-spaces. 
\end{theorem}
\begin{proof} 
After Proposition~\ref{without-B} it is enough to consider the case where $B$ is equivalent to $\BTop(n)$, and so we omit it from the notation and discussion.
Let $\cF:\mfld_n^{\partial}\ra \cV$ be a symmetric monoidal functor satisfying the $\ot$-excision condition, and let $A$ be the restriction of $\cF$ to $\disk_n^\partial$. For a manifold with boundary $\overline M$, we prove that the canonical morphism $\int_{\overline M}A \ra \cF(\overline M)$ is an equivalence. By $\ot$-excision applied to the collar-gluing $\partial \overline M \times [0,1)\underset{\partial \overline M\times\RR}\bigcup M\cong \overline{M}$, where $M$ is the interior of $\overline M$, we obtain a diagram in $\cV$
\[
\xymatrix{
\displaystyle\underset{{\partial \overline M \times[0,1)}}{\int}A\underset{\underset{\partial \overline M\times\RR}\int A}\bigotimes\int_{M}A\ar[r]\ar[d]&\cF(\partial \overline M \times[0,1))\underset{\cF (\partial \overline M\times\RR)}\bigotimes\cF(M)\ar[d]\\
\displaystyle\int_{\overline M}A\ar[r]&\cF(\overline M)
}
\] in which the vertical morphisms are equivalences. It now suffices to show that the top horizontal morphism is an equivalence. The equivalences of $\int_M A\ra \cF(M)$ and $\int_{\partial \overline M\times \RR}A\ra \cF(\partial\overline M\times \RR)$ are given by Theorem \ref{homology}; the last equivalence $\int_{\partial \overline M \times[0,1)} A \ra \cF({\partial \overline M \times[0,1)})$ by follows by Theorem \ref{homology} and Proposition \ref{product}.
\end{proof}

\begin{remark} There is an analogous theorem available for stratified spaces proved in \S2 of~\cite{aft2}, where embeddings are conically smooth and preserve the stratifications. The previous theorems hold if the $\ot$-presentable condition is weakened to the condition that the monoidal structure distributes over sifted colimits. 
\end{remark}

The following example describes how factorization homology specializes to usual homology.

\begin{example}\label{ES} Let $\cV^\oplus$ be either the $\oo$-category of chain complexes or of spectra equipped with the direct sum monoidal structure. Since every object $V$ therein has an essentially unique morphism $V\oplus V\ra V$, there is an equivalence $\Alg_{\disk_n^{\sf fr}}(\cV^\oplus)\simeq \cV$. The factorization homology of a framed $n$-manifold $M$ with coefficients in $V$ is then equivalent to $\int_MV \simeq \sC_\ast(M,V)$, or $\Sigma^\infty_\ast M\ot V$ for spectra, the stabilization of $M$ smashed with $V$. There is a natural functor $\varinjlim \mfld_n^{\fr}\ra \spaces^{\sf fin}$, and this functor is an equivalence because: it is fully faithful since $\varinjlim_k \Emb^{\fr}(M\times\RR^k,N\times\RR^k) \ra \Map(M\times\RR^\infty,N\times\RR^\infty) \simeq\Map(M,N)$ is a weak homotopy equivalence for every $M$ and $N$; it is essentially surjective since every finite CW complex $X$ can be embedded into $\RR^m$ for $m$ sufficiently large, and thus it is homotopy equivalent to a framed $n$-manifold, namely an open regular neighborhood of the embedding. 
Theorem \ref{homology} thereby specializes to the formulation of the Eilenberg--Steenrod axioms given in the introduction. If one sets $\cV$ to be the opposite ${\sf Ch}^{\op}$, then one likewise recovers the Eilenberg--Steenrod axioms for cohomology.

\end{example}

To this point, we have worked with topological manifolds and embeddings with the compact-open topology, but other choices could have been made, for instance, to work with smooth manifolds, or to have regarded the embeddings spaces as discrete.
We next remark on as to how these alternate choices play out.

\begin{remark} One could replace the $\oo$-category of topological $n$-manifolds and	 embeddings with that of smooth $n$-manifolds and smooth embeddings, $\mfld^{\sm}_n$, or piecewise linear $n$-manifolds and piecewise linear embeddings, $\mfld^{\sf PL}_n$, and Theorem~\ref{homology} is still valid. The proof, in fact, is even simpler, as the existence of handlebody structures is much easier than in the case of topological manifolds. However, a consequence of smoothing theory \cite{kirbysieb} is an equivalence \[\mfld^{\sm}_n\simeq \mfld_n^{{\sf BO}(n)}\] between smooth $n$-manifolds and $\BO(n)$-framed topological $n$-manifolds, so long as $n$ is not equal $4$, so nothing new is obtained by considering smooth or piecewise linear manifolds rather than $B$-framed topological manifolds. In the case of smooth 4-manifolds, there is still an equivalence $\disk_4^{\sm} \simeq \disk_4^{{\sf BO}(4)}$, and so combining the smooth and topological versions of Theorem \ref{homology} gives an equivalence \[\bH(\mfld_n^{\sm},\cV)\simeq \bH(\mfld_4^{{\sf BO}(4)},\cV)\] between homology theories for smooth 4-manifolds and homology theories for ${\sf BO}(4)$-framed topological 4-manifolds. Since the ${\sf BO}(4)$-framing on the tangent microbundle is only a very weak measure of a smooth structure in dimension 4 (e.g., there is a single ${\sf BO}(4)$-framing of $\RR^4$, in contrast to the uncountably many smooth structures), this form of factorization homology is not a refined invariant of smooth 4-manifolds. 
\end{remark}

\medskip 

In the subsequent sections, we will be solely concerned with the homology theories of Definition \ref{exc}. There do, however, exist very interesting functors in $\Fun^\ot(\mfld_n, \cV)$ which do not satisfy the $\ot$-excision property. In \cite{qcloops} the authors were particularly concerned with one such construction: given a stack $X$ over $k$, one can define a functor $\mfld_n \ra {\sf Stacks}\ra \m_k$ given by sending a manifold $M$ to the cotensor  with $X$, $M\rightsquigarrow X^M$, and then taking sheaf cohomology of the structure sheaf of this stack. 
In the case of the circle, $M=S^1$, this gives the Hochschild homology of $X$: $\cO(X^{S^1})\simeq \hh_*(X)$. As soon as $X$ is nonaffine, this construction will generically fail to satisfy $\ot$-excision. While the cotensor only depends on the homotopy type of $M$, as we shall see in Proposition \ref{tensor}, it has a more refined generalization taking as input a derived stack defined over $n$-disk algebras, rather than commutative algebras, as in \cite{thez}.

\begin{definition}\label{stack} 
Let $\cV$ be a symmtric monoidal $\infty$-category which is $\ot$-presentable.
For a $B$-framed $n$-manifold $M$ and a functor $X:\Alg_{\disk^B_n}(\cV)\ra \spaces$, the factorization homology of $M$ with coefficients in $X$ is the object in $\cV$
\[
\int_MX := \limit_{A\in{\sf Aff}^{\op}_{/X}}\int_M A
\] 
where ${\sf Aff} \simeq \Alg_{\disk^B_n}(\cV)^{\op}$ is the image of the Yoneda embedding in $\Fun\bigl(\Alg_{\disk_n^B}(\cV),\spaces\bigr)$.
\end{definition}

Intuitively, the object $\int_M X$ is $\Gamma(X,\int_M\cO)$, the global sections of the presheaf on $X$ obtained by applying factorization homology of $M$ to the structure sheaf of $X$. From the vantage offered by Costello and Gwilliam in \cite{kevinowen}, this generalization of factorization homology serves as a candidate for the structure of observables in a topological quantum field theory which is not necessarily perturbative, a direction we will pursue in future work.

\section{Nonabelian Poincar\'e duality}

Applying Theorem \ref{homology}, we offer a slightly different perspective, and proof, of the nonabelian Poincar\'e duality of Salvatore \cite{salvatore}, Segal~\cite{segallocal}, and Lurie \cite{dag}, which calculate factorization homology with coefficients in iterated loop spaces as a compactly supported mapping space.

\begin{definition} For a space $B$, the $\infty$-category $\Space_B$ is that of retractive spaces over $B$. The $\oo$-category $\Space^{\geq n}_B$ is the full $\oo$-subcategory of $\Space_B$ consisting of those $X\rightleftarrows B$ for which the retraction is $n$-connective, that is, $\pi_* X \ra \pi_* B$ is an isomorphism for $*< n$ with any choice of base-point of $B$.\end{definition}

Equivalently, an object $X\in \Space_B^{\geq n}$ may be thought of as a fibration $X\ra B$ with a distinguished section for which, for each $b\in B$, the fiber $X_b$ is $n$-connective.

\begin{definition} For $X\in \Space_B$ and $M \ra B$ a space over $B$, the space $\Gammac(M,X)$ of compactly supported sections of $X$ over $M$ is the subspace of $\Map_{/B}(M,X)$ consisting of those maps $f\colon M\to X$ over $B$ for which there is a compact subspace $K\subset M$ with the property that the restriction  factors through the section: $f_{|M\smallsetminus K} : M \to B\to X$.
\end{definition}

For $B\ra \BTop(n)$ as before, note that $\Gammac(-,X)$ defines a covariant functor $\mfld_n^B \ra \Space$.
By inspection, this functor carries finite disjoint unions to finite products of spaces, which is to say that $\Gammac(-,X)$ is symmetric monoidal with respect to the Cartesian monoidal structure on the $\infty$-category of spaces.

To easily state the next result, Theorem~\ref{spaces}, we introduce some terminology.
Each point $g\in B$ determines a symmetric monoidal functor $\disk_n^{\fr} \to \disk_n^B$.
Thereafter, each symmetric monoidal functor $A\colon \disk_n^B \to \spaces$ determines the associative monoid
\[
\pi_0(A,g)\colon \disk_1^{\fr} \xra{\RR^{n-1}\times -} \disk_n^{\fr} \longrightarrow \disk_n^B \xra{~A~} \spaces \xra{~\pi_0~} {\sf Set} ~.
\]
We say $A$ is \emph{group-like} if, for each $g\in B$, this monoid $\pi_0(A,g)$ is a group.  

\begin{example}
In the case $B\simeq \ast$ so that a $B$-framing is a framing in the standard sense, a symmetric monoidal functor $A\colon \disk_n^{\sf fr} \to \spaces$ is the data of an $\cE_n$-algebra, and this $\cE_n$-algebra is group-like in the standard sense if and only if $A$ is group-like in the the sense just above.  

\end{example}

\begin{theorem}\label{spaces} 
The functor $\Gammac: \Space_B \ra \Fun^\ot(\mfld_n^B, \Space)$ restricts as a fully faithful functor 
\[
\Space_B^{\geq n} ~\hookrightarrow~ \bH(\mfld_n^B,\Space)
\] 
from $n$-connective retractive spaces over $B$ to homology theories. The essential image consists of those $\cF$ for which the restriction $\cF_{|\disk_n^B}$ is group-like.
\end{theorem}

The following is the core technical detail in the proof of Theorem~\ref{spaces}, that the assignment of compactly supported sections $\Gammac(-,X)$ is $\ot$-excisive in our sense, provided the retraction $X\ra B$ is sufficiently connected. The fullness of the functor above is a parametrized form of May's theorem from \cite{may}, identifying $n$-connective objects as $\disk^{\fr}_n$-algebras, which is Theorem 5.1.3.6 of \cite{dag}.
 \
\begin{lemma}\label{gamma-c} 
Let $M$ be $B$-framed manifold, equipped with a collar-gluing $M'\underset{M_0\times\RR}\bigcup M''\cong M$. 
Let $X\in \Space^{\geq n}_B$ be an $n$-connective space over $B$. There is a natural weak homotopy equivalence 
\[
\Gammac(M',X)\underset{\Gammac(M_0\times\RR,X)}\times\Gammac(M'',X)~\simeq~\Gammac(M, X)
\] 
between the quotient of the product $\Gammac(M',X)\times\Gammac(M'',X)$ by the diagonal action of $\Gammac(M_0\times\RR,X)$ and the space of compactly supported sections of $X$ over $M$.

\end{lemma}

\begin{proof}

Since $M_0\hookrightarrow M$ is a proper embedding, a compactly supported section over $M$ can be restricted to obtain a compactly supported section over $M_0$, as well as over $M\smallsetminus M'$ and over $M\smallsetminus M''$.
Namely, there is a diagram among spaces of compactly supported sections
\[
\xymatrix{
\Gammac(M',X) \times \Gammac(M'',X)  \ar[dr]  \ar[rr]  \ar[dd]
&
&
\Gammac(M'',X)  \ar[d]
\\
&
\Gammac(M,X)  \ar[r]  \ar[d]  \ar[dr]
&
\Gammac(M\smallsetminus M',X)  \ar[d]
\\
\Gammac(M',X)  \ar[r]
&
\Gammac(M\smallsetminus M'',X)  \ar[r]
&
\Gammac(M_0,X).
}
\]
By inspection, the bottom horizontal sequence is a fiber sequence, as is the right vertical sequence, as is the diagonal sequence.
Also, the inner square is pullback because $M'\underset{M_0\times \RR}\bigcup M''\cong M$ is a pushout.  
Because $M_0\subset M$ is equipped with a regular neighborhood, these fiber sequences are in fact Serre fibration sequences, and so the inner square is a weak homotopy pullback square.  
In particular, there is a right homotopy coherent action of $\Omega \Gammac(M_0,X)$ on $\Gammac(M',X)$, a left homotopy coherent action of $\Omega \Gammac(M_0,X)$ on $\Gammac(M'',X)$, and a continuous map of topological spaces
\begin{equation}\label{gammas}
\Gammac(M',X)\underset{\Omega \Gammac(M_0,X)} \times \Gammac(M'',X)\longrightarrow \Gammac(M,X)
\end{equation}
from the balanced homotopy coinvariants.  
Because $X\to B$ is $n$-connective and $M_0$ is $(n-1)$-dimensional, the base $\Gammac(M_0,X)$ is connected.
It follows that the map~(\ref{gammas}) is in fact a weak homotopy equivalence.  
The assertion follows after the canonical identification $\Omega \Gammac(M_0,X) \cong \Gammac(M_0\times \RR,X)$ as group-like $\cE_1$-spaces.

\end{proof}

As a consequence, we recover the following theorem of Salvatore \cite{salvatore}, Segal~\cite{segallocal}, and Lurie \cite{dag}.

\begin{cor}[Nonabelian Poincar\'e duality]\label{non-abel} 
For any $X$ in $\Space_B^{\geq n}$, with associated $\disk^B_n$-algebra $\Omega_B^nX\in \Alg_{\disk^B_n}(\Space)$, there is a natural equivalence 
\[
\int_M \Omega_B^n X ~ \simeq ~\Gammac(M,X)
\] 
between the factorization homology of a $B$-framed $n$-manifold $M$ with coefficients in $\Omega^n_{B}X$ and the space of compactly supported sections of $X$ over $M$.
\end{cor}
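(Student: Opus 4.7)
The plan is to combine the preceding lemma with Theorem \ref{homology}. First, $\Gammac(-, X) \colon \mfld_n^B \to \Space$ is symmetric monoidal with respect to disjoint union on the source and Cartesian product on the target, since the data of a compactly supported section over $M \sqcup M'$ decomposes canonically as a pair of compactly supported sections over each summand. The preceding lemma then shows that for $X \in \Space_B^{\geq n}$, this functor additionally satisfies $\ot$-excision along codimension-$1$ decompositions. Hence $\Gammac(-, X)$ lies in $\bH(\mfld_n^B, \Space)$.

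By Theorem \ref{homology}, every such homology theory is canonically equivalent to factorization homology with coefficients in its value on $\RR^n$. We therefore obtain a natural equivalence $\Gammac(M, X) \simeq \int_M A$ for every $B$-framed $n$-manifold $M$, where $A := {\sf ev}_{\RR^n}\Gammac(-, X) = \Gammac(\RR^n, X)$, equipped with its tautological $\disk_n^B$-algebra structure coming from the restriction $\Gammac(-, X)|_{\disk_n^B}$.

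The proof is completed by identifying $A \simeq \Omega_B^n X$ as $\disk_n^B$-algebras in $\Space$. A compactly supported section of $X \to B$ over the $B$-framed disk $\tau \colon \RR^n \to B$ is a map $\RR^n \to X$ over $B$ that agrees with the distinguished section outside some compact subset; collapsing the complement identifies such data with pointed maps $(S^n, \infty) \to (X_{\tau(0)}, s(\tau(0)))$, which is by definition the underlying space of $\Omega_B^n X$. The $\disk_n^B$-algebra structures on both sides arise by functoriality along embeddings $\coprod_I \RR^n \hookrightarrow \RR^n$: on the left, by extending a collection of compactly supported sections by the distinguished section on the complement of the embedded disks; on the right, by the standard $\disk_n^B$-operadic operations on $\Omega_B^n X$. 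Under the collapse identification the two operations match, since extending by the distinguished section corresponds precisely to the pinch map $S^n \to \bigvee_I S^n$ determined by the embedding.

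The main obstacle is upgrading this pointwise matching to an equivalence of $\disk_n^B$-algebras at the level of full $\oo$-categorical coherence rather than merely on the underlying homotopy operad; this is the parametrized variant of May's recognition theorem. However, for the present statement this coherence is already built in, since the $\disk_n^B$-algebra structure on $A$ is, by construction, the functor $\Gammac(-, X)|_{\disk_n^B}$, and this restriction is naturally equivalent to the standard $\disk_n^B$-algebra structure on $\Omega_B^n X$ as a functor out of $\disk_n^B$ to $\Space$ via the one-point-compactification equivalence, which is itself natural in $B$-framed embeddings of disks.
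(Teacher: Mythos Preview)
Your proposal is correct and follows essentially the same approach as the paper: verify via the preceding lemma that $\Gammac(-,X)$ is a homology theory, then invoke Theorem \ref{homology} to identify it with factorization homology in its value on $\RR^n$, namely $\Gammac(\RR^n,X)\simeq \Omega_B^n X$. Your last two paragraphs supply more detail on the $\disk_n^B$-algebra identification than the paper does---the paper simply records the equivalence $\Gammac(\RR^n,X)\simeq \Omega_B^n X$ without further comment---but the strategy is identical.
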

\begin{proof} We apply Theorem \ref{homology}: Since $\Gammac(-,X)$ is a homology theory, it is equivalent to factorization homology with coefficients in $\Gammac(\RR^n,X)$, which is identified as the $n$-fold loop space of the fiber of the map $X\ra B$, $\Gammac(\RR^n,X)\simeq \Omega^n_BX$.
\end{proof}

This result specializes to Poincar\'e duality between twisted homology and compactly supported cohomology, as we will now explain.
Given $X\simeq \BTop(n)\times K(A,i)$ a product with an Eilenberg-MacLane space, then $\Omega^n_{\BTop(n)} X \simeq \Mapc(\RR^n, K(A,i)) \simeq K(A,i-n)$ is an $n$-disk algebra in spaces, where the multiplication is the usual group structure on $K(A,i-n)$ but is equipped with a nontrivial action of $\Top(n)$. We then have an equivalence of spaces
\[\int_M \Mapc(\RR^n, K(A,i)) \simeq \Mapc(M,K(A,i))\]  which is the space level version of the equivalence $\sH_*^\tau(M,A[i-n])\simeq \sH^*_{\sf c}(M, A[i])$, obtained by applying $\Omega^\infty$ to the spectrum level equivalence given by Atiyah duality. Given an $A$-orientation of $M$, then one can additionally untwist the lefthand side, as usual.

\begin{remark} The factorization homology $\int_M\Omega^n_BX$ is built from configuration spaces of disks in $M$ with labels defined by $X\to B$, and the preceding result thereby has roots in the configuration space models of mapping spaces dating to the work of Segal, May, McDuff and others in the 1970s, see \cite{segal}, \cite{may}, \cite{mcduff}, and \cite{bodig}. Factorization homology is not a generalization 
of the classical configuration spaces with labels, as described in~\cite{bodig}, because the configuration space with labels in $X$ models a mapping space with target the $n$-fold suspension of $X$, rather than into $X$ itself. Instead, factorization homology generalizes the configuration spaces with {\it summable} or {\it amalgamated} labels of Salvatore \cite{salvatore} and Segal \cite{segallocal}.

\end{remark}

\begin{proof}[Proof of Theorem~\ref{spaces}]
Corollary~\ref{non-abel} identifies the functor $\Gamma_{\sf c} \colon \spaces_B^{\geq n} \to  \bH(\mfld_n^B,\Space)$ as the composition $\Gamma_{\sf c} \colon \spaces_B^{\geq n} \xra{\Omega^n_B} \disk_n^B \xra{\int}  \bH(\mfld_n^B,\Space)$.
Theorem~\ref{homology} gives that $\int$ is fully faithful, so it remains to argue that $\Omega_B^n$ is fully faithful with essential image the group-like $\disk_n^B$-algebras in spaces.  
This is immediate because, for instance, $\spaces_{/B}$ is an $\infty$-topos (Theorem 5.1.3.6 of \cite{dag}).  

\end{proof}

\section{Commutative algebras, free algebras, and Lie algebras}

Previously, we have described factorization homology for $n$-disk algebras in spaces, chain complexes, and spectra, when the monoidal structure is given by products, and the resulting homology theories give rise to twisted mapping spaces and usual homology theories. Factorization homology behaves very differently, and with greater sensitivity to manifold topology, when the monoidal structure on chain complexes or spectra is given by tensor product or smash product -- this case is closest to the physical motivation given in the introduction. 
We will consider this case in this section, focusing on some of the most common classes of $n$-disk algebra structures, which are either commutative, freely generated by a module, or freely generated by a Lie algebra.

\subsection{Factorization homology with coefficients in commutative algebras}
We begin by examining commutative algebras in $\cV$, otherwise known as $\cE_\infty$-algebras in $\cV$.
Note first that a commutative algebra in $\cV$ is equivalent to a symmetric monoidal functor ${\sf Fin}\ra \cV$ from finite sets with disjoint union.
So restriction along the connected components functor $[-]: \disk^B_n\ra\disk_n \xra{[-]} {\sf Fin}$ defines a forgetful functor 
\[
{\sf fgt}\colon \Alg_{\com}(\cV)\longrightarrow \Alg_{\disk^B_n}(\cV)~.
\] 
We have the following consequence of $\ot$-excision, where $\cV$ is a symmetric monoidal $\infty$-category which is $\ot$-presentable.
To phrase this result we utilize that the $\infty$-category $\Alg_{\com}(\cV)$ is tensored over spaces: 
\[
\spaces\times \Alg_{\com}(\cV) \xra{\ot} \Alg_{\com}(\cV)~,\qquad (X,A)\mapsto \colim\bigl(X\to \ast \xra{\{A\}} \Alg_{\com}(\cV)\bigr)~.
\]

\begin{prop}\label{tensor} 
The following diagram among $\infty$-categories commutes:
\[
\xymatrix{
\mfld^B_n \times \Alg_{\com}(\cV) \ar[r]^-{U\times {\sf id}}\ar[d]^-{{\sf id}\times {\rm fgt}} 
& 
\Space \times \Alg_{\com}(\cV)\ar[r]^-\ot &\Alg_{\com}(\cV)\ar[d]
\\
\mfld^B_n \times\Alg_{\disk^B_n}(\cV)\ar[rr]_\int &&\cV
}
\]
where $U$ is the underlying space functor and the right downward arrow is the standard forgetful functor.
In particular, there is a natural equivalence \[\int_M A~\simeq~ M\ot A\] \noindent between the factorization homology of $M$ with coefficients in $A$ and the tensor of the commutative algebra $A$ with the underlying space of $M$.
\end{prop}
\begin{proof} 
The functor $-\ot A\colon \mfld_n^B \to \cV$ carries each contractible manifold to the underlying object of the commutative algebra $A$.  
For $(A_i)_{i\in I}$ a finite sequence of commutative algebras in $\cV$, the $I$-fold coproduct in $\Alg_{\com}(\cV)$ is the pointwise tensor product $\underset{i\in I} \bigotimes A_i$ (see Proposition~3.2.4.7 of~\cite{dag}).  
It follows that this functor $-\ot A$ is symmetric monoidal.  
From the defining expression of factorization homology as a colimit, there results a natural transformation
\[
\int_- A \longrightarrow -\ot A
\]
between symmetric monoidal functors $\mfld_n^B\to \cV$, which evaluates as an equivalence on objects of $\disk_n^B$.
Lemma~\ref{excision} grants that the domain of this natural transformation satisfies $\ot$-excision.
Because a collar-gluing $M'\underset{M_0\times \RR}\bigcup M'' \cong M$ determines a pushout of underlying spaces $M'\underset{M_0}\coprod M'' \simeq M$, the codomain of this natural transformation too satisfies $\ot$-excision.  
That the natural transformation evaluates on each $B$-framed $n$-manifold $M$ as an equivalence then follows by induction on a handle decomposition on $M$.  

\end{proof}

In other words, the factorization homology $\int_M A$ has a natural structure of a commutative algebra when $A$ is commutative, and this commutative algebra has a universal property: for each commutative algebra $C$ in $\cV$ there is a natural equivalence from the space of commutative algebra maps
\[
\Map_{\com}\bigl(\int_MA,C\bigr)~\simeq ~\Map_{\com}(A,C)^M~,
\]
to the space of maps from $M$ to the space of commutative algebra maps.
By formal properties of left adjoints and tensors, this has the immediate corollary.

\begin{cor} 
For each symmetric monoidal $\infty$-category $\cV$ which is $\ot$-presentable, there is a natural equivalence in $\cV$: \[\int_M \sym(V) ~\simeq ~\sym(M\ot V)~. \]
\end{cor}

In particular, if $\cV$ is the $\infty$-category of chain complexes with tensor product, then there is an equivalence $\int_M\sym(V) \simeq \sym (\sC_\ast(M,V))$ for each chain complex $V$. We now push the above result slightly further for the two special classes of commutative algebras arising from the cohomology of spaces and the cohomology of Lie algebras.
The study of the latter has benefitted greatly from conversations with Kevin Costello and Dennis Gaitsgory, and a full development of these ideas will amount to a forthcoming work.  

\begin{prop}\label{prop.5.3} 
Let $M$ be an $n$-manifold, and let $X$ be a nilpotent $n$-connective space of finite type over $R$ such that $\pi_nX$ is finite. There is a natural equivalence of chain complexes \[\int_M \sC^\ast(X, R)~ \simeq~ \sC^\ast(X^M,R)\] between the factorization homology of $M$ with coefficient in the $R$-cohomology of $X$ and the $R$-cohomology of the space of maps from $M$ to $X$.
\end{prop}
\begin{proof}
The two sides are evidently equivalent in the case where $M$ is homeomorphic to $\RR^n$, so to establish the result it suffices, as usual, to check by induction over a handle decomposition of $M$. Given a handle decomposition $N\underset{S^{k}\times\RR^{n-k}}\bigcup\RR^n\cong M$, we have a homotopy pullback diagram of spaces
\begin{equation}\label{nilpotent-square}
\xymatrix{
X^M\ar[r]\ar[d]&\ar[d] X^{\RR^n}\\
X^{N}\ar[r]&X^{S^{k}\times\RR^{n-k}}\\}
\end{equation}
which gives rise to a natural map in $R$-homology
\[\sC_\ast(X^{M},R)\longrightarrow \sC_\ast(X^{N},R)\underset{\sC_\ast(X^{S^k\times\RR^{n-k}},R)}\bigotimes\sC_\ast(X^{\RR^n},R)\] from the homology of the mapping spaces to the cotensor product of the comodules $\sC_\ast(X^{N},R)$ and $\sC_\ast(X^{\RR^n},R)$ over the coalgebra $\sC_\ast(X^{S^k\times\RR^{n-k}},R)$. This map is an equivalence exactly if the homological Eilenberg--Moore, or Rothenberg--Steenrod, spectral sequence for this homotopy Cartesian diagram converges. By Dwyer \cite{dwyer}, the convergence of this Eilenberg--Moore spectral sequence is assured if the base $X^{S^k\times\RR^{n-k}}$  is connected and the action
\[
\pi_1 \bigl(X^{S^k\times\RR^{n-k}}, f\bigr)
\circlearrowright
\pi_\ast\bigl({\sf fiber}_f(X^{\RR^n}\ra X^{S^k\times\RR^{n-k}})\bigr)
\]
is nilpotent for a choice of basepoint $f \in X^{S^k\times\RR^{n-k}}$.
Since $X$ is $n$-connective, for $k<n$ any map $f:  S^k \ra X$  is nullhomotopic, and therefore the base $X^{S^k\times\RR^{n-k}} \simeq X^{S^k}$ is connected.
We can thus take $f$ to be the constant map valued at the basepoint of $X$, and so identify ${\sf fiber}_f(X^{\RR^n}\ra X^{S^k\times\RR^{n-k}}) \simeq \Omega^{k+1}X$.
We now show the action of $\pi_1 \bigl(X^{S^k}\bigr)$ on $\pi_\ast \Omega^{k+1} X$ is nilpotent. 
Consider the fiber sequence $\Omega^{k+1} X \to X^{S^k} \xra{{\sf ev}_\ast} X$.
This fibration admits a section, given by the constant maps. 
Consequently, there is an identification as a semi-direct product:
\[
\pi_1\bigl(X^{S^k}\bigr)
\cong
\pi_1X \ltimes\pi_1\Omega^k X
\cong
\pi_1X \ltimes\pi_0 \Omega^{k+1} X
~.
\]
Through this identification, the action of $\pi_1 \bigl(X^{S^k}\bigr)$ on $\pi_\ast \Omega^{k+1} X$ is the unique action that extends the standard actions of $\pi_1X$ and of $\pi_0 \Omega^{k+1}X$ on $\pi_\ast \Omega^{k+1}X$.
By assumption, the action of $\pi_1X$ on $\pi_{\ast+k+1}X \cong \pi_\ast \Omega^{k+1} X $ is nilpotent.
In the case that $k=0$, the same assumption grants that the action of $\pi_0 \Omega^{k+1} X$ on $\pi_\ast \Omega^{k+1}X$ is nilpotent. 
In the case that $k>0$, the action of $\pi_0 \Omega^{k+1} X$ on $\pi_\ast \Omega^{k+1}X$ is automatically nilpotent due to commutativity. 
Nilpotence of the action of $\pi_1 \bigl(X^{S^k}\bigr)$ on $\pi_\ast \Omega^{k+1} X$ follows. Consequently, the natural map in $R$-homology above is an equivalence.

The remainder of this argument is checking that we have imposed sufficient finiteness conditions to ensure the convergence in cohomology as well as homology. Dualizing, we obtain an equivalence \[\Bigl(\sC_\ast(X^{N},R)\underset{\sC_\ast(X^{S^k\times\RR^{n-k}},R)}\bigotimes\sC_\ast(X^{\RR^n},R)\Bigr)^\vee \overset{\sim}\longrightarrow \sC^\ast(X^{M},R)~. \] Since $\pi_nX$ is finite, the mapping space $X^K$ has finitely many components for any $n$-dimensional finite CW complex $K$. Because the source spaces, $M$, $N$, $S^k\times\RR^{n-k}$, and $\RR^n$, all have have the homotopy types of finite $n$-dimensional CW complexes, we obtain that all these mapping spaces have finitely many components. Since they are additionally finite CW complexes and $X$ is finite type, the homology groups of the mapping spaces $\sH_i(X^K,R)$ are finite rank over $R$, and therefore $\sC_*(X^K,R)$ is its own double dual: the map $\sC_*(X^K,R)\ra \sC^*(X^K,R)^\vee$ is an equivalence. Likewise, there is an equivalence between the dual of the tensor product and the cotensor product \[\Bigl( \sC^\ast(X^{N},R)\underset{\sC^\ast(X^{S^k\times\RR^{n-k}},R)}\bigotimes\sC^\ast(X^{\RR^n},R)\Bigr)^\vee \simeq \sC_\ast(X^{N},R)\underset{\sC_\ast(X^{S^k\times\RR^{n-k}},R)}\bigotimes\sC_\ast(X^{\RR^n},R)\simeq \sC_\ast(X^M,R)\]
-- this can be seen by commuting duality with the colimit to obtain a limit of a cosimplicial object, then comparing termwise. 
Continuing, one then concludes the equivalence 
\[
\sC^\ast(X^M,R) \simeq \sC^\ast(X^N,R)\underset{\sC^\ast(X^{S^k\times\RR^{n-k}},R)}\bigotimes \sC^\ast(X^{\RR^n},R)~,
\]
thereby finishing the proof.

 \end{proof}

\begin{remark} See \cite{gtz1} for a closely related approach to the study of mapping spaces, in which one approaches the cohomology of a mapping space as a Hochschild homology-type invariant of the cohomology of the target.
\end{remark}

\subsection{Factorization homology with coefficients in free $n$-disk algebras}
We next turn to the factorization homology of free $n$-disk algebras, a topic studied in more detail in~\S2 of~\cite{aft2}. 

Denote by $\free_n(V)$ the augmented $n$-disk algebra freely generated by $V\in \cV$, regarded as a trivial $\Top(n)$-module. Let $\conf_i(M,\partial M)$ denote the quotient of $\conf_i(M)$ by the subspace of all configurations in which at least one point lies in the boundary of $M$.

\begin{prop}\label{free} Let $M$ be an $n$-manifold, possibly with boundary. Let $V\in \cV$ be an object of a symmetric monoidal $\oo$-category which is $\ot$-presentable. There is an equivalence 
\[
\int_M \free_n(V) ~\simeq~ \coprod_{i\geq 0} \conf_i(M,\partial M)\underset{\Sigma_i}\ot V^{\ot i}
\] 
between the factorization homology of an $n$-manifold $M$, possibly with boundary, with coefficients in $\free_n(V)$ and the coproduct of the configuration spaces of $M$ labeled by $V$ quotient the subspace where at least one point lies in the boundary of $M$. 
\end{prop}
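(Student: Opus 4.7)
The plan is to compute $\int_M \free_n(V)$ directly using the left Kan extension description and the explicit formula for the free $\disk_n^\partial$-algebra. Since $V$ is a trivial $\Top(n)$-module, the adjunction between $\free_n$ and evaluation at $\RR^n$ gives, for any $U \in \disk_n^\partial$,
\[
\free_n(V)(U) \simeq \coprod_{i \geq 0} \conf_i(U, \partial U) \underset{\Sigma_i}{\otimes} V^{\otimes i},
\]
where the $\Top(n)^i$-frame data on the embedding space $\Emb(\sqcup^i \RR^n, U)$ drops out because its action on $V^{\otimes i}$ is trivial, leaving the configuration space.

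By the Proposition following Definition~\ref{coend}, $\int_M \free_n(V) \simeq \colim_{U \in \disk_{n/M}^\partial} \free_n(V)(U)$. Substituting the formula above and commuting the outer colimit past the coproduct over $i$ and past the $\Sigma_i$-coinvariants --- both legitimate under the $\otimes$-colimit condition on $\cC$ --- one obtains
\[
\int_M \free_n(V) \simeq \coprod_{i \geq 0} \Bigl(\colim_{U \in \disk_{n/M}^\partial} \conf_i(U, \partial U)\Bigr) \underset{\Sigma_i}{\otimes} V^{\otimes i}.
\]
The proof then reduces to establishing the colimit identification
\[
\colim_{U \in \disk_{n/M}^\partial} \conf_i(U, \partial U) \simeq \conf_i(M, \partial M).
\]

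The main obstacle is this last identification. Unwrapping via the Grothendieck construction, it amounts to showing that for each interior configuration $x = \{x_1, \dots, x_i\} \subset M^\circ$, the $\oo$-category of those $U \in \disk_{n/M}^\partial$ whose image contains $x$ has a weakly contractible classifying space. This is a sifted diagram --- one can always pick a disjoint union of small Euclidean neighborhoods of the $x_j$ sitting inside any prescribed $U$, and the space of such choices is contractible --- and the argument is directly parallel to Lemma~\ref{final}, ultimately resting on the homotopy equivalence $\sB((\disk_{n/M})^{V/}) \simeq \ran(M)^{\pi_0 V/}$ and the contractibility of the Ran space.

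As an alternative perspective, one could define $\cF_V(M) := \coprod_i \conf_i(M, \partial M) \underset{\Sigma_i}{\otimes} V^{\otimes i}$ as a candidate symmetric monoidal functor on $\mfld_n^\partial$, verify directly that it is $\otimes$-excisive (this being the classical labeled configuration space splitting across a codimension-$1$ submanifold, obtained by stratifying configurations according to the collar coordinate), and then conclude by Theorem~\ref{homology}. However, verifying excision in this form ultimately reduces to essentially the same Ran-space cofinality input, so I prefer the direct Kan extension computation outlined above.
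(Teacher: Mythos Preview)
Your proposal is correct and follows essentially the same route as the paper: both compute $\int_M \free_n(V)$ as a left Kan extension, commute the colimit past the coproduct over $i$ and past $-\otimes_{\Sigma_i} V^{\otimes i}$ via the $\otimes$-colimit condition, and reduce to the space-level identification $\colim_{U\in\disk^\partial_{n/M}}\conf_i(U,\partial U)\simeq\conf_i(M,\partial M)$, which is then deduced from the Ran-space contractibility underlying Lemma~\ref{final}. The only cosmetic difference is that the paper packages this last step via the arrow category $(\disk^\partial_{n/M})^{\Delta^1}$ and finality of ${\sf ev}_0$, whereas you phrase it as contractibility of the fibers of the Grothendieck construction over each configuration $x\in\conf_i(M^\circ)$; these are equivalent formulations of the same cofinality input.
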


The argument below is a special case of one in \cite{af1}.

\begin{proof}  
The following equivalences  \[\int_M \free_n(V) \simeq \colim_{U\in\disk^{\partial}_{n/M}}\coprod_{i\geq 0}\conf_i(U,\partial U)\underset{\Sigma_i}\ot V^{\ot i}\simeq \coprod_{i\geq 0}\colim_{U\in\disk^{\partial}_{n/M}}\conf_i(U,\partial U)\underset{\Sigma_i}\ot V^{\ot i}\] follow from the commutativity of colimits. 
To conclude the result it therefore suffices to show that for each $i$ the canonical morphism
\[
\colim_{U\in\disk^\partial_{n/M}}\conf_i(U,\partial U)\underset{\Sigma_i}\ot V^{\ot i}
\longrightarrow 
\conf_i(M,\partial M)\underset{\Sigma_i}\ot V^{\ot i}
\] 
in $\cV$ is an equivalence. 
By the assumed distributivity in the $\ot$-presentability condition, this follows if the natural $\Sigma_i$-equivariant map of $\Sigma_i$-spaces 
\begin{equation}\label{compose-coend}
\colim_{U\in\disk^\partial_{n/M}}\conf_i(U, \partial U)\longrightarrow \conf_i(M, \partial M)
\end{equation}
is an equivalence, which we now show.

We first consider the case that the boundary of $M$ is empty, so that the natural $\Sigma_i$-equivariant map $\conf_i(M) \xra{\cong} \conf_i(M,\partial M)$ is a homeomorphism, and the natural functor $\disk_{n/M}\xra{\simeq}\disk^\partial_{n/M}$ is an equivalence of $\infty$-categories.  
In this case we are to show that the $\Sigma_i$-equivariant map of $\Sigma_i$-spaces
\[
\colim_{U\in \disk_{n/M}} \conf_i(U)\longrightarrow \conf_i(M)
\]
is an equivalence. 
After Proposition~\ref{EEd-vs-EE}, it is enough to show that the $\Sigma_i$-equivariant map of $\Sigma_i$-topological spaces
\begin{equation}\label{hyper}
\underset{U\in {\sf Disk}_{n/M}}\colim \conf_i(U) \longrightarrow \conf_i(M)
\end{equation}
is a weak homotopy equivalence from the homotopy colimit.  
Each map $\conf_i(U)\to \conf_i(M)$ comprising this homotopy colimit is an open embedding.
Also, for each 
element $\{1,\dots,i\}\xra{c} M$ of $\conf_i(M)$, choosing mutually disjoint Euclidean neighborhoods about each $c(j)\in M$ demonstrates that $c$ lies in the image of at least one such open embedding. 
Therefore this augmented diagram is an open cover of $\conf_i(M)$.
This open cover of $M$ has the property that each finite intersection of its terms is covered by terms contained in this finite intersection.
This is to say that this open cover of $\conf_i(M)$ is in fact a hypercover.
That the map~(\ref{hyper}) is a weak homotopy equivalence follows from Corollary 1.6 of~\cite{dugger-isaksen}.

Now suppose $\partial M$ is not empty.  
Fix a collar-neighborhood $\partial M \times \RR_{\geq 0} \hookrightarrow M$.
Such a collar-neighborhood determines the top horizontal arrow in the diagram of topological spaces
\[
\xymatrix{
\underset{\emptyset \neq I \subset \{1,\dots,i\}}\colim \conf_{\{1,\dots,i\}\smallsetminus I}(\partial M)\times \conf_{I}(\mathring{M})  \ar[rr]  \ar[d]
&&
\conf_i(\mathring{M})  \ar[d]
\\
\ast \ar[rr]
&&
\conf_i(M,\partial M)
}
\]
which commutes up to homotopy -- here, the homotopy colimit is indexed by the opposite of the poset of non-empty subsets of $\{1,\dots,i\}$.
This collar-neighborhood also gives that this diagram is a weak homotopy pushout.
The result for this case of non-empty boundary thus follows from the previous case of empty boundary applied to $\partial M$ and to $\mathring{M}$, using that homotopy colimits commute with one another.  

\end{proof}

\begin{remark} The preceding result has as a consequence that factorization homology is not a homotopy invariant of a closed $n$-manifold, since the homotopy type of configuration spaces is known to be sensitive to simple homotopy equivalence by \cite{simple}.
\end{remark}

From Proposition \ref{free} and some reasoning on stable splittings of configuration spaces, one can deduce the following result. For the previous proposition, we required the monoidal structure of $\cV$ to distribute over colimits; for convenience, we next assume the underlying $\infty$-category of $\cV$ is stable.

\begin{prop}\label{splits} 
Let $\cV$ be a symmetric monoidal $\infty$-category which is $\ot$-presentable and whose underlying $\infty$-category is stable.
For any $V\in\cV$ and any nonnegative integer $m<n$, there is an equivalence in $\cV$:
\[
\int_{S^{m}\times\RR^{n-m}}\free_{n}(V)~ \simeq~ \free_n(V) \ot \free_{n-m}(\Sigma^m V)~. 
\]
\end{prop}

\begin{proof} 
We first consider the case that $\cV = \bigl({\sf Spectra}, \wedge\bigr)$ and $V=\Sigma^\infty X$ is the suspension spectrum of a pointed connected space $X$. 
There is a natural equivalence of spaces 
\[
\int_{S^m\times\RR^{n-m}}\Omega^n\Sigma^n X~\underset{\rm Cor~\ref{non-abel}}\simeq~ (\Omega^{n-m}\Sigma^nX)^{S^m}~\simeq~ \Omega^n\Sigma^n X\times \Omega^{n-m}\Sigma^n X
\] 
where the first equivalence is by nonabelian Poincar\'e duality and the second is by the standard trivialization of each fiber sequence $\Map_*(K,G)\ra G^K\ra G$ whose base is equipped with the structure of a group-like $\cE_1$-space.
(It is this second step that requires the strict inequality $m<n$.) 
Passing to suspension spectra, Proposition~\ref{free} begets the further equivalence
\[\bigvee_{k\geq 0} \conf_k(S^m\times\RR^{n-m})\underset{\Sigma_k}\ot\Sigma^{\oo}X^{\ot k}\simeq \Bigl(\bigvee_{i\geq 0} \conf_i(\RR^n)\underset{\Sigma_i}\ot\Sigma^{\oo}X^{\ot i} \Bigr)\ot\Bigl(\bigvee_{j\geq 0} \conf_j(\RR^{n-m})\underset{\Sigma_j}\ot\Sigma^{\oo}(\Sigma^m X)^{\ot j}\Bigr)~.\]
Collecting coefficients of terms which are homogeneous in $X$ determines a $\Sigma_k$-equivariant stable homotopy equivalence \[\Sigma^\infty_\ast\conf_k(S^m\times\RR^{n-m})\simeq\coprod_{i+j=k}\Bigl(\Sigma_k\underset{\Sigma_i}\times\Sigma^\infty_\ast\conf_i(\RR^n) \Bigr)\ot \Bigr(\Sigma_k\underset{\Sigma_j}\times\Sigma^\infty_\ast\bigl(\Sigma^{mj}_+\conf_j(\RR^{n-m})\bigr)\Bigl)\] where $\Sigma_k\underset{\Sigma_l}\times -$ is induction from $\Sigma_l$-spectra to $\Sigma_k$-spectra.

Now consider the general case for $\cV$, according to the hypothesis.
After Proposition~\ref{free}, both sides of the equivalence split as coproducts in homogeneous terms $V^{\ot k}$, so it suffices to show that the coefficients of these terms are degreewise equivalent. 
Inspecting, we thus seek an equivalence in $\cV$:
\[
\conf_k(S^m\times\RR^{n-m})\underset{\Sigma_k}\otimes V^{\ot k}
~\simeq~
\bigoplus_{i+j=k}\Bigl(\conf_i(\RR^n)\underset{\Sigma_i} \otimes V^{\ot i} \Bigr)\ot \Bigr(\conf_j(\RR^{n-m})\underset{\Sigma_j}\otimes (\Sigma^m V)^{\ot j}\Bigl)~.\]
This equivalence follows from the conclusion of the previous paragraph upon tensoring with $V^{\ot k}$ and taking balanced $\Sigma_k$-coinvariants.  

\end{proof}
\begin{remark} The equivalence in the above proposition can be upgraded to an equivalence of $(n-m)$-disk algebras if the righthand side is given a twisted algebra structure, using a natural action of $\free_{n-m}(\Sigma^m V)$ on $\free_n(V)$.
\end{remark}

The calculations to this point allow the following interesting description of the bar construction on a free $n$-disk algebra. Let $\cV$ be a symmetric monoidal $\oo$-category which is $\ot$-presentable.

\begin{prop}\label{bar} For any object $V\in\cV$ in a symmetric monoidal $\infty$-category which is $\ot$-presentable, there is an natural equivalence 
\[
{\sf Bar}\bigl(\free_n(V)\bigr)~\simeq~\free_{n-1}(\Sigma V)
\] 
between the bar construction on the free $n$-disk algebra on $V$ and the free $(n-1)$-disk algebra generated by the suspension of $V$.

\end{prop}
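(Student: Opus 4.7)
My strategy is to extract $\sB\free_n(X)$ as a factor inside the already-computed $\int_{S^1\times\RR^{n-1}}\free_n(X)$, and then cancel the remaining $\free_n(X)$-factor.  Applying Proposition \ref{splits} with $m = 1$ gives one expression for the factorization homology of the thickened circle:
\[
\int_{S^1 \times \RR^{n-1}} \free_n(X) \;\simeq\; \free_n(X) \ot \free_{n-1}(\Sigma X).
\]

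For the other expression, decompose the thickened circle along two parallel copies of $\RR^{n-1}$, so that $S^1 \times \RR^{n-1} \cong \RR^n \cup_{\RR^n \sqcup \RR^n} \RR^n$.  Applying $\ot$-excision (Lemma \ref{excision}) to an arbitrary $n$-disk algebra $A$ yields
\[
\int_{S^1\times\RR^{n-1}}A \;\simeq\; A\underset{A\ot A}\ot A,
\]
where $A$ is an $A\ot A$-bimodule via its multiplication.  Using Dunn additivity $\cE_n \simeq \cE_1 \ot \cE_{n-1}$ to view $A$ as an $\cE_1$-algebra in $(n-1)$-disk algebras, for an augmented $A$ the two-sided bar resolution together with the augmentation produces the splitting $A \ot_{A\ot A}A \simeq A \ot (\mathbf{1}\ot_A\mathbf{1}) = A \ot \sB A$.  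Applying this to $A = \free_n(X)$ with its canonical augmentation $X\mapsto 0$, and comparing the two expressions, gives
\[
\free_n(X) \ot \sB \free_n(X) \;\simeq\; \free_n(X) \ot \free_{n-1}(\Sigma X).
\]
Base-changing along the augmentation $\free_n(X)\to \mathbf{1}$ cancels the $\free_n(X)$ factor on both sides, and yields the desired equivalence $\sB\free_n(X) \simeq \free_{n-1}(\Sigma X)$.

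The main obstacle is the bar/Hochschild-type identification $A\ot_{A\ot A}A \simeq A\ot \sB A$ for an augmented $A$.  This hinges on the equivalence $A\simeq A^{\op}$ as $\cE_1$-algebras in $(n-1)$-disk algebras (which holds for $n\geq 2$ by higher commutativity) together with the standard splitting of the two-sided bar complex furnished by the augmentation and unit.  A conceptually cleaner alternative avoids this step by invoking the bar-cobar adjunction $\sB \dashv \Omega$ between augmented $n$-disk and $(n-1)$-disk algebras: Yoneda then gives $\Map(\sB\free_n(X), B)\simeq \Map(\free_n(X),\Omega B) \simeq \Map_\cC(X,\Omega B) \simeq \Map_\cC(\Sigma X, B) \simeq \Map(\free_{n-1}(\Sigma X),B)$ for every augmented $(n-1)$-disk algebra $B$, forcing the claimed equivalence directly.
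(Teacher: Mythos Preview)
Your approach is genuinely different from the paper's, and it has a real gap.

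The paper's argument is much more direct. It identifies $\sB A \simeq \int_{D^1\times\RR^{n-1}} A$ via $\ot$-excision for the decomposition $[0,1]=[0,1)\cup_{(0,1)}(0,1]$, and then applies Proposition~\ref{free} directly to the manifold-with-boundary $D^1\times\RR^{n-1}$: this gives
\[
\int_{D^1\times\RR^{n-1}}\free_n(X)\;\simeq\;\coprod_{i\geq 0}\conf_i\bigl(D^1\times\RR^{n-1},\partial D^1\times\RR^{n-1}\bigr)\underset{\Sigma_i}\ot X^{\ot i},
\]
and the elementary equivalence $\conf_i(D^1\times\RR^{n-1},\partial D^1\times\RR^{n-1})\simeq \Sigma^i\conf_i(\RR^{n-1})$ (collapse in the $D^1$-coordinate of each point) identifies this with $\free_{n-1}(\Sigma X)$. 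No cancellation is needed.

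Your main argument has a gap at the cancellation step. You arrive at an equivalence $\free_n(X)\ot\sB\free_n(X)\simeq\free_n(X)\ot\free_{n-1}(\Sigma X)$ of \emph{objects} of $\cC$, and then propose to ``base-change along the augmentation $\free_n(X)\to\mathbf 1$.'' But base-change requires this to be an equivalence of $\free_n(X)$-modules, and nothing you have invoked provides that. Proposition~\ref{splits} is stated only at the level of objects; the remark following it upgrades it to an equivalence of $(n-1)$-disk algebras with a \emph{twisted} structure on the right-hand side, which is not the $\free_n(X)$-linear structure you would need. Without establishing this compatibility, applying $\mathbf 1\ot_{\free_n(X)}(-)$ to both sides is not a legitimate move. (A minor additional point: Proposition~\ref{splits} requires $m<n$, so your route also needs $n\geq 2$; the case $n=1$ would have to be handled separately.)

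Your proposed alternative via the adjunction $\sB\dashv\Omega$ is circular in the context of this paper: that adjunction is the theorem immediately \emph{following} Proposition~\ref{bar}, and its proof uses Proposition~\ref{bar} as the key input (to show that $\sB$ intertwines the free functors, hence preserves colimits, hence has a right adjoint). So you cannot appeal to the adjunction to prove the proposition.
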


\begin{proof} 
Via Example~\ref{ex.assoc}, each augmented associative algebra $A\to \uno$ in $\cV$ determines a symmetric monoidal functor $A\colon \disk^{\partial, \sf or}_1\to \cV$. 
Applying $\ot$-excision in this simplest case of the collar-gluing $[-1,1)\underset{(-1,1)}\bigcup (-1,1]\cong [-1,1]$, we have that the bar construction ${\sf Bar}(A)$ is identifiable as the factorization homology over the closed 1-disk:
\[
{\sf Bar} (A) ~\simeq~\int_{\DD^1} A~.
\]
Proposition~\ref{free} gives the first and last of the following identifications 
\begin{eqnarray}
\nonumber
\underset{\DD^1\times\RR^{n-1}}\int \free_n(V) 
&
\simeq 
&
\coprod_{i\geq 0} \conf_i(\DD^1\times\RR^{n-1}, \partial \DD^1\times\RR^{n-1})\underset{\Sigma_i}\ot V^{\ot i} 
\\
\nonumber
&
\simeq 
&
\coprod_{i\geq 0} \Sigma^i\conf_i(\RR^{n-1})\underset{\Sigma_i}\ot V^{\ot i} 
\\
\nonumber
&
\simeq 
&
\coprod_{i\geq 0} \conf_i(\RR^{n-1})\underset{\Sigma_i}\ot (\Sigma V)^{\ot i} 
\\
\nonumber
&
\simeq 
&
\free_{n-1}(\Sigma V)~.
\end{eqnarray}
The second identification follows from the $\Sigma_i$-equivariant equivalence of spaces \[\conf_i(\DD^1\times M, \partial \DD^1\times M)\simeq \DD^i\times \conf_i(M)\big/\partial\DD^i\times \conf_i(M)\simeq \Sigma^i \conf_i(M)\] in the case $M=\RR^{n-1}$.
The third equivalence is a coproduct of a composite of two equivalences: $\Sigma^iX\ot V^{\ot i}\simeq X\ot \Sigma^i(V^{\ot i})\simeq X\ot (\Sigma V)^{\ot i}$.  
The first of these equivalences uses that tensoring with spaces preserves colimits among spaces -- an assertion which is direct from definitions.  
The second of these equivalences directly uses the assumption that the symmetric monoidal structure of $\cV$ distributes over colimits.    

\end{proof}

\begin{remark} In \cite{cotangent}, it was proved that ${\sf Bar}^n \free_n(V)\simeq \uno \oplus \Sigma^n V$, the free $0$-disk algebra on the $n$th suspension of $V$. This can now be seen as an application of Proposition \ref{bar} iterated $n$ times. This result is well-known in the case of $n=1$: the bar construction for the tensor algebra on $V$ is $\uno \oplus \Sigma V$. Our result is also entirely to be expected given the example of $n$-fold loop space, where for a connected pointed space $V$, we can calculate ${\sf Bar}\bigl( \free_n(V)\bigr)\simeq \sB\Omega^n\Sigma^nV\simeq \Omega^{n-1}\Sigma^nV\simeq \free_{n-1}(\Sigma V)$. As such, this result could have been proved longed ago, as it fits naturally into works such as \cite{may} and \cite{cohen}.
We note lastly that the limiting statement as $n$ increases gives the well-known equivalence ${\sf Bar}\bigl({\sf Sym}(V)\bigr)\simeq {\sf Sym}(\Sigma V)$.\end{remark}

This result has an important consequence for the relation between the $\oo$-categories of augmented $n$-disk algebras and augmented $(n-1)$-disk algebras:

\begin{theorem}\label{baradj} Let $\cV$ be a symmetric monoidal $\oo$-category which is $\ot$-presentable. There is an adjunction
\[{\sf Bar}: \Alg_{\disk_n^{\fr}}^{\sf aug}(\cV)\leftrightarrows \Alg_{\disk_{n-1}^{\fr}}^{\sf aug}(\cV): \Omega\]
where the functors are given by the bar construction and by a functor $\Omega$ which, on underlying objects of $\cV$, is the based loop functor. 
\end{theorem}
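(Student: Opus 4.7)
The plan is to construct $\sB$ explicitly, check that it preserves colimits, obtain $\Omega$ via the adjoint functor theorem, and identify $\Omega$ on underlying objects by comparison with free algebras via Proposition \ref{bar}.

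First, define $\sB$. An augmented $n$-disk algebra $A$ determines a $\disk_n^{\partial,\fr}$-algebra by sending the half space $\RR_{\geq 0}\times \RR^{n-1}$ to the unit $1_\cC$ using the augmentation $A\to 1$; morphisms into $\RR^n$ are handled by the $n$-disk algebra structure, and incidence morphisms with the half space are handled by composing with the augmentation. Cartesian product with $D^1$ gives a symmetric monoidal functor $D^1\times -:\disk_{n-1}^{\fr}\to \mfld_n^{\partial,\fr}$. Composing with factorization homology produces a symmetric monoidal functor
\[
\sB A \;:=\; \Bigl(\,U\mapsto \int_{D^1\times U} A\,\Bigr)\;:\;\disk_{n-1}^{\fr}\longrightarrow \cC,
\]
that is, a $\disk_{n-1}^{\fr}$-algebra in $\cC$. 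A canonical augmentation $\sB A\to 1_{\disk_{n-1}^{\fr}\alg}$ is obtained by applying $\int_{D^1\times-}$ to the morphism of $\disk_n^{\partial,\fr}$-algebras $A\to 1_{\rm triv}$, since factorization homology of the trivial algebra is the unit.

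Next, observe that $\sB$ preserves colimits. Factorization homology $\int_N (-) : \disk_n^{\partial,\fr}\alg(\cC)\to \cC$ is a left Kan extension, hence preserves colimits in the algebra variable; applying this pointwise for each $U\in\disk_{n-1}^{\fr}$ shows $\sB$ preserves colimits in $\disk_n^{\fr}\alg^{\sf aug}(\cC)$ (colimits of augmented algebras are computed in the underlying category of algebras, together with the induced augmentation to $1$). Since $\cC$ is presentable and satisfies the $\ot$-colimit condition, both $\disk_n^{\fr}\alg^{\sf aug}(\cC)$ and $\disk_{n-1}^{\fr}\alg^{\sf aug}(\cC)$ are presentable $\oo$-categories. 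The adjoint functor theorem then produces a right adjoint $\Omega:\disk_{n-1}^{\fr}\alg^{\sf aug}(\cC)\to\disk_n^{\fr}\alg^{\sf aug}(\cC)$.

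Finally, identify $\Omega$ on underlying objects. Let $\free_n^{\sf aug}:\cC\to\disk_n^{\fr}\alg^{\sf aug}(\cC)$ denote the left adjoint to the forgetful functor sending an augmented algebra to its augmentation ideal (equivalently to its underlying object in the pointed stable case); such an adjoint exists and identifies $\free_n^{\sf aug}(X)$ with the free (nonunital) $n$-disk algebra on $X$ with augmentation to $1$. For any $X\in\cC$ and $B\in\disk_{n-1}^{\fr}\alg^{\sf aug}(\cC)$, compose adjunctions:
\[
\Map_\cC\bigl(X,\Omega B\bigr)\simeq \Map\bigl(\free_n^{\sf aug}(X),\Omega B\bigr)\simeq \Map\bigl(\sB\,\free_n^{\sf aug}(X),B\bigr).
\]
By Proposition \ref{bar}, $\sB\,\free_n^{\sf aug}(X)\simeq \free_{n-1}^{\sf aug}(\Sigma X)$ as augmented algebras, so the right side identifies with $\Map_\cC(\Sigma X,B)\simeq \Map_\cC(X,\Omega^{\sf top}B)$, where $\Omega^{\sf top}$ is the based loop functor on the underlying object. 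By the Yoneda lemma, the underlying object of $\Omega B$ in $\cC$ is the based loops of the underlying object of $B$, as claimed.

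The main technical obstacle is the careful handling of the augmented category: verifying that $\sB$ really does land in augmented algebras with the stated augmentation map, that the free--forget adjunction on the augmented side has the identification used above, and that Proposition \ref{bar} upgrades to an equivalence of augmented $(n-1)$-disk algebras (not merely of underlying nonunital algebras). Each of these is a formal exercise using the universal property of augmentation together with the functoriality of factorization homology, but they must be set up consistently across the chain of adjunctions.
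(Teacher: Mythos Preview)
Your overall strategy---construct $\sB$ via $\int_{D^1\times(-)}$, prove colimit preservation, apply the adjoint functor theorem, and identify $\Omega$ on underlying objects via Proposition~\ref{bar} and the free--forget adjunction---matches the paper's. But your colimit-preservation step has a genuine gap.

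You assert that $\int_N(-):\disk_n^{\partial,\fr}\alg(\cC)\to\cC$ preserves colimits because it ``is a left Kan extension.'' Left Kan extension along $\iota:\disk_n\hookrightarrow\mfld_n$ is indeed a left adjoint on $\Fun(\disk_n,\cC)$ and so preserves \emph{pointwise} colimits there. But colimits in $\disk_n^{\fr}\alg(\cC)=\Fun^\otimes(\disk_n^{\fr},\cC)$ are not computed pointwise: coproducts of algebras are free products, not direct sums of the underlying functors. So the deduction does not go through. (There is a second version of the same issue on the target side: even granting pointwise preservation, you would only conclude that $U\mapsto\sB(\colim_i A_i)(U)$ agrees with the pointwise colimit of the $\sB(A_i)$, which again is not the colimit in $\disk_{n-1}^{\fr}\alg$.) The paper handles this with a genuine bootstrap: Proposition~\ref{bar} gives a commuting square with $\free_n$, $\free_{n-1}$, $\Sigma$, and $\sB$, from which $\sB$ preserves coproducts of \emph{free} algebras; a separate lemma, cited from \cite{af1}, shows $\sB$ preserves geometric realizations, using that sifted colimits of algebras \emph{are} computed on underlying objects; monadicity then upgrades this to preservation of all coproducts and hence all colimits. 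In other words, Proposition~\ref{bar} is not a final decoration used only to identify $\Omega$---it is the essential input that makes colimit preservation work for non-sifted colimits.
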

\begin{proof}
We first show that the bar construction defines a functor ${\sf Bar}\colon \Alg_{\disk_n^{\fr}}^{\sf aug}(\cV)\ra \Alg_{\disk_{n-1}^{\fr}}^{\sf aug}(\cV)$.
This is so in as much as ${\sf Bar}\simeq\int_{\DD^1\times\RR^{n-1}}$ is the object in $\cV$ underlying the augmented $\cE_{n-1}$-algebra $\disk^{\sf fr}_{n-1} \xra{\DD^1\times - } \mfld^{\partial,\sf fr}_n \xra{\int A} \cV$.
We will now argue that this functor carries colimit diagrams to colimit diagrams.

Proposition~\ref{bar} gives a commutative diagram among $\infty$-categories:
\[\xymatrix{
\Alg_{\disk^{\fr}_{n}}^{\sf aug}(\cV)\ar[rr]^{{\sf Bar}}&& \Alg_{\disk^{\fr}_{n-1}}^{\sf aug}(\cV)\\
\cV\ar[u]^{\free_n}\ar[rr]^\Sigma && \cV\ar[u]_{\free_{n-1}}.\\}\]
As a consequence, the functor ${\sf Bar}$ preserves coproducts of free $\cE_n$-algebras. 
We next argue that ${\sf Bar}$ preserves sifted colimits.
Using that the symmetric monoidal structure of $\cV$ distributes over colimits, it is enough to argue that factorization homology $\int_{M}\colon \Alg^{\sf aug}_{\disk^{\sf fr}_n}(\cV) \to \cV$ carries sifted colimit diagrams to colimit diagrams, for each framed $n$-manifold $M$ possibly with boundary.

So let $A\colon J\to \Alg_{\disk^{\sf fr}_n}^{\sf aug}(\cV)$ be a diagram of augmented $\cE_n$-algebras in $\cV$, indexed by a sifted $\infty$-category $J$.
The canonical arrow $\underset{j\in J}\colim~ {\sf Bar} (A_j) \longrightarrow {\sf Bar}( \underset{j\in J}\colim A_j)$ in $\cV$ is a composite
\[
\colim_{j\in J} \colim_{U\in \disk^{\partial,\sf fr}_{n/M}} A_j(U)~ \simeq ~\colim_{U\in \disk^{\partial, \sf fr}_{n/(M}} \colim_{j\in J} A_j(U) \longrightarrow \colim_{U\in \disk^{\partial, \sf fr}_{n/M}} (\colim_{j\in J} A_j)(U) 
\]
where the outer objects are in terms of the defining expression for factorization homology, the left equivalence is through commuting colimits, and the right arrow is a colimit of canonical arrows.  
Again using that the symmetric monoidal structure of $\cV$ distributes over colimits, 
each arrow $\underset{j\in J}\colim A_j(U) \to (\underset{j\in J}\colim A_j)(U)$ is an equivalence if and only if it is for $U$ connected.  
This is the case provided the forgetful functor ${\sf ev}_{\RR^n}\colon \Alg_n^{\sf aug}(\cV) \to \cV$ preserves sifted colimits.  
This assertion is Proposition~3.2.3.1 of~\cite{dag}.

Continuing, we conclude that ${\sf Bar}$ preserves all coproducts, since any coproduct is a geometric realization of coproducts of free algebras (this is a consequence of the $\infty$-categorical  Barr--Beck Theorem~4.7.4.5 of~\cite{dag}; see~\S4.7 thereof for a general discussion). 
Now, coproducts and geometric realizations generate all colimits, and we conclude that ${\sf Bar}$ is a colimit preserving functor from $n$-disk algebras to $(n-1)$-disk algebras.

To complete the proof, both of the $\infty$-categories in the adjunction are presentable (see Corollary 3.2.3.3 of~\cite{dag}).  
The adjoint functor theorem (Corollary~5.5.2.9 of~\cite{topos}) can thus be applied to conclude that ${\sf Bar}$ is a left adjoint. The diagram above is therefore a commutative diagram of left adjoints, and therefore their right adjoints commute. Consequently, ${\sf Bar}$ has a right adjoint which, at the level of objects of $\cV$, agrees with based loops $\Omega$, which is right adjoint to suspension $\Sigma$.  

\end{proof}

\begin{remark} We interpret Theorem~\ref{baradj} in terms of Koszul duality, after \cite{gk} and \cite{priddy}. Given the calculation of the \emph{Koszul dual} operad $\DD\cE_{n} \simeq \cE_n[-n]$, computed at the level of homology by Getzler and Jones \cite{getzlerjones} and computed in chain complexes by Fresse \cite{fressekoszul}, these functors should be equivalent to restriction and induction along the Koszul dual of the map $\cE_{n-1} \ra \cE_{n}$. However, Theorem \ref{baradj} is more general: it holds unstably (for instance, when $\cV$ is $\spaces$), whereas this operadic form of Koszul duality would require $\cV$ to be stable.
\end{remark}

\subsection{Factorization homology from Lie algebras}
We now discuss factorization homology of $n$-disk algebras coming from Lie algebras. Our results are closely analogous to those above about the factorization homology of $n$-disk algebras coming from topological spaces. For simplicity, we assume our Lie algebras are defined over a fixed field $k$ of characteristic zero.

As we proceed, we make use of the fact that Lie algebras in ${\sf Mod}_k$ admit totalizations, and therefore the $\infty$-category of such is cotensored over pointed spaces in a natural and standard way: $(X,\mathfrak{g})\mapsto {\mathfrak{g}}^X$. For $M$ an $n$-manifold, we notate $\Map_{\sf c}(M , \mathfrak{g}) := {\mathfrak{g}}^{M^+}$, where $M^+$ is the 1-point compactification. One can describe this as $\Mapc(M,\frak g)\simeq \sC_{\sf c}^\ast(M,\frak g)$, the compactly supported cochains of $M$ with coefficients in $\frak g$. See also \cite{owen} and \cite{kevinowen} for a discussion of the following.

\begin{prop}\label{env} For $\mathfrak{g}$ a Lie algebra over $k$, there is a natural equivalence of chain complexes over $k$,
\[\int_M \sC_\ast^{\Lie}\bigl(\Mapc(\RR^n,\mathfrak{g})\bigr)~ \simeq~ \sC^{\Lie}_\ast\bigl(\Mapc(M, \mathfrak{g})\bigr)~, \]
between the factorization homology of the Lie algebra chains of $\Omega^n {\mathfrak{g}}$ and the Lie algebra chains of the Lie algebra $\mathfrak{g}^{M^+}$.  

\end{prop}

\begin{proof} 
Lie algebra chains defines a functor between $\infty$-categories
$\sC^{\Lie}_\ast: \Alg_{\Lie}({\sf Mod}_k) \ra {\sf Mod}_k$.
This functor carries finite products of Lie algebras to finite tensor products of $k$-modules, which is to say that it is symmetric monoidal.
Furthermore, this functor preserves geometric realizations, so Lemma \ref{switch} applies to give a natural identification: $\int_M \sC_\ast^{\Lie}\bigl(\Mapc(\RR^n,\mathfrak{g})\bigr)\simeq  \sC_\ast^{\Lie}\bigl(\int_M\Mapc(\RR^n,\mathfrak{g})\bigr)$. The equivalence $\int_M\Mapc(\RR^n,\mathfrak{g})\simeq \Mapc(M,\mathfrak{g})$ now follows from the argument of nonabelian Poincar\'e duality (Corollary~\ref{non-abel}), only here we apply it in the usual abelian setting.\footnote{See \cite{af2} for a complete account, where nonabelian Poincar\'e duality is an instance of a version of Poincar\'e/Koszul duality for \emph{Cartesian-presentable $\infty$-categories}.}

\end{proof}

\begin{remark} 
The $n$-disk algebra $\sC_\ast^{\Lie}\bigl(\Mapc(\RR^n,\mathfrak{g})\bigr)$ has an interesting separate interpretation that we state here, and prove as a separate work. 
There is a forgetful functor from $\cE_n$-algebras in chain complexes over $k$ to Lie algebras over $k$ (see~\cite{cohen} for an account at the level of homology).  
The adjoint functor theorem (Corollary~5.5.2.9 of~\cite{topos}) applies to this functor, and so there is an adjunction
\[
\sU_n \colon \Alg_{\Lie}({\sf Mod}_k)~ \rightleftarrows~ \Alg_{\disk_n^{\sf fr}}({\sf Mod}_k)\colon {\sf fgt}~.
\]
In the case $n=1$, this left adjoint $\sU_1$ agrees with the familiar universal enveloping algebra functor.  
In general, there is an identification of $\disk_n^{\sf fr}$-algebras,
\[
\sU_n \mathfrak{g}~\simeq~  \sC_\ast^{\Lie}\bigl(\Mapc(\RR^n, \mathfrak{g})\bigr)~, 
\]
through which Proposition~\ref{env} can be reformulated as an equivalence of chain complexes over $k$:
\[
\int_M\sU_n\mathfrak{g}~\simeq~ \sC^{\Lie}_\ast\bigl(\Mapc(M,\mathfrak{g})\bigr)~. 
\]
\end{remark}


\begin{thebibliography}{99}



\bibitem[An]{andrade} Andrade, Ricardo. From manifolds to invariants of $E_n$-algebras. Thesis (PhD) -- Massachusetts Institute of Technology. 2010.

\bibitem[AF1]{af1} Ayala, David; Francis, John. Poincar\'e/Koszul duality. 
Preprint. Available at http://arxiv.org/abs/1409.2478.

\bibitem[AF2]{af2} Ayala, David; Francis, John. Zero-pointed manifolds. 
Preprint. Available at http://arxiv.org/abs/1409.2857.

\bibitem[AFT1]{aft1} Ayala, David; Francis, John; Tanaka, Hiro Lee. Local structures on stratified spaces. 
Preprint. Available at http://arxiv.org/abs/1409.0501.

\bibitem[AFT2]{aft2} Ayala, David; Francis, John; Tanaka, Hiro Lee. Factorization homology of stratified spaces. Preprint. Available at http://arxiv.org/abs/1409.0848.

\bibitem[Bar]{barwick} Barwick, Clark. On the Q-construction for exact $\infty$-categories. Available at http://math.mit.edu/$\sim$clarkbar/papers.html

\bibitem[BD]{bd} Beilinson, Alexander; Drinfeld, Vladimir. Chiral algebras. American Mathematical Society Colloquium Publications, 51. American Mathematical Society, Providence, RI, 2004.

\bibitem[BFN]{qcloops} Ben-Zvi, David; Francis, John; Nadler, David. Integral transforms and Drinfeld centers in derived algebraic geometry. J. Amer. Math. Soc. 23 (2010), no. 4, 909--966.

\bibitem[BV]{bv} Boardman, J. Michael; Vogt, Rainer. Homotopy invariant algebraic structures on topological spaces. Lecture Notes in Mathematics, Vol. 347. Springer-Verlag, Berlin-New York, 1973. x+257 pp.

\bibitem[B\"o]{bodig} B\"odigheimer, C.-F. Stable splittings of mapping spaces. Algebraic topology (Seattle, Wash., 1985), 174--187, Lecture Notes in Math., 1286, Springer, Berlin, 1987.

\bibitem[BCT]{bct} B\"odigheimer, C.-F.; Cohen, F.; Taylor, L. On the homology of configuration spaces. Topology 28 (1989), no. 1, 111--123.

\bibitem[Coh]{cohen} Cohen, Frederick. The homology of $\cC_{n+1}$-spaces, $n \geq 0$. Lecture Notes in Math., Vol. 533, Springer-Verlag, Berlin-Heidelberg-New York, 1976, pp. 207--351.

\bibitem[Cos]{costello2} Costello, Kevin. Topological conformal field theories and Calabi--Yau categories. Adv. Math. 210 (1), 2007.

\bibitem[CG]{kevinowen} Costello, Kevin; Gwilliam, Owen. Factorization algebras in perturbative quantum field theory. Preprint. Available at http://www.math.northwestern.edu/~costello/renormalization

\bibitem[DI]{dugger-isaksen}  Dugger, Daniel; Isaksen, Daniel. Topological hypercovers and $\AA^1$-realizations. Math. Z. 246 (2004), no. 4, 667--689.

\bibitem[Dw]{dwyer} Dwyer, William. Strong convergence of the Eilenberg--Moore spectral sequence. Topology 13 (1974), 255--265.

\bibitem[Fra1]{thez} Francis, John. Derived algebraic geometry over $\cE_n$-rings. Thesis (PhD) -- Massachusetts Institute of Technology. 2008.

\bibitem[Fra2]{cotangent} Francis, John. The tangent complex and Hochschild cohomology of $\cE_n$-rings. Compos. Math. 149 (2013), no. 3, 430--480.

\bibitem[FG]{fg} Francis, John; Gaitsgory, Dennis. Chiral Koszul duality. Selecta Math. (N.S.) 18 (2012), no. 1, 27--87.

\bibitem[Fre]{fressekoszul} Fresse, Benoit. Koszul duality of $E_n$-operads. Selecta Math. (N.S.) 17 (2011), no. 2, 363--434.

\bibitem[GJ]{getzlerjones} Getzler, Ezra; Jones, John. Operads, homotopy algebra and iterated integrals for double loop spaces. Unpublished work, 1994. Available at arXiv:hep-th/9403055.

\bibitem[GK]{getzlerkapranov} Getzler, Ezra; Kapranov, Mikhail. Cyclic operads and cyclic homology. Geometry, topology, and physics, 167--201, Conf. Proc. Lecture Notes Geom. Topology, IV, Int. Press, Cambridge, MA, 1995.

\bibitem[GTZ1]{gtz1} Ginot, Gr\'egory; Tradler, Thomas; Zeinalian, Mahmoud. A Chen model for mapping spaces and the surface product. Ann. Sci. \'Ec. Norm. Sup\'er. (4) 43 (2010), no. 5, 811--881. 

\bibitem[GTZ2]{gtz2} Ginot, Gr\'egory; Tradler, Thomas; Zeinalian, Mahmoud. Higher Hochschild homology, topological chiral homology and factorization algebras. Comm. Math. Phys. 326 (2014), no. 3, 635--686.

\bibitem[GiK]{gk} Ginzburg, Victor; Kapranov, Mikhail. Koszul duality for operads.  Duke Math. J.  76  (1994),  no. 1, 203--272.

\bibitem[Gw]{owen} Gwilliam, Owen. Factorization algebras and free field theories. Thesis (PhD) -- Northwestern University. 2012.

\bibitem[HMRS]{nilpotent} Hilton, Peter; Mislin, Guido; Roitberg, Joseph; Steiner, Richard. On free maps and free homotopies into nilpotent spaces. Algebraic topology (Proc. Conf., Univ. British Columbia, Vancouver, B.C., 1977), pp. 202--218, Lecture Notes in Math., 673, Springer, Berlin, 1978. 

\bibitem[Jo]{joyal} Joyal, Andr\'e. Quasi-categories and Kan complexes. Special volume celebrating the 70th birthday of Professor Max Kelly. J. Pure Appl. Algebra 175 (2002), no. 1-3, 207--222.

\bibitem[Ka]{kallel} Kallel, Sadok. Spaces of particles on manifolds and generalized Poincar\'e dualities. Q. J. Math. 52 (2001), no. 1, 45--70.

\bibitem[KS]{kirbysieb} Kirby, Robion; Siebenmann, Laurence. Foundational essays on topological manifolds, smoothings, and triangulations. With notes by John Milnor and Michael Atiyah. Annals of Mathematics Studies, No. 88. Princeton University Press, Princeton, N.J.; University of Tokyo Press, Tokyo, 1977. vii+355 pp. 

\bibitem[Ki]{kister} Kister, James. Microbundles are fibre bundles. Ann. of Math. (2) 80 1(964), 190--199. 

\bibitem[Kn]{ben} Knudsen, Ben. Factorization homology and the rational homology of configuration spaces. Preprint. Available at http://math.northwestern.edu/~knudsen/sconf.pdf 


\bibitem[LS]{simple} Longoni, Riccardo; Salvatore, Paolo. Configuration spaces are not homotopy invariant. Topology 44 (2005), no. 2, 375--380. 


\bibitem[Lu1]{topos} Lurie, Jacob. Higher topos theory. Annals of Mathematics Studies, 170. Princeton University Press, Princeton, NJ, 2009. xviii+925 pp.

\bibitem[Lu2]{dag} Lurie, Jacob. Higher algebra. Preprint. Available at http://www.math.harvard.edu/~lurie/

\bibitem[Lu3]{cobordism} Lurie, Jacob. On the classification of topological field theories. Current developments in mathematics, 2008, 129--280, Int. Press, Somerville, MA, 2009.

\bibitem[Ma]{may} May, J. Peter. The geometry of iterated loop spaces. Lectures Notes in Mathematics, Vol. 271. Springer-Verlag, Berlin-New York, 1972. viii+175 pp.

\bibitem[Mc]{mcduff} McDuff, Dusa. Configuration spaces of positive and negative particles.  Topology 14 (1975), 91--107. 

\bibitem[MS]{milnorstasheff} Milnor, John W; Stasheff, James. Characteristic classes. Annals of Mathematics Studies, No. 76. Princeton University Press, Princeton, N. J.; University of Tokyo Press, Tokyo, 1974. vii+331 pp.

\bibitem[Mo]{moise} Moise, Edwin. Affine structures in 3-manifolds. V. The triangulation theorem and Hauptvermutung. Ann. of Math. (2) 56, (1952). 96--114.

\bibitem[MW]{blob} Morrison, Scott; Walker, Kevin. Blob homology. Geom. Topol. 16 (2012), no. 3, 1481--1607.

\bibitem[Pr]{priddy} Priddy, Stewart. Koszul resolutions. Trans. Amer. Math. Soc. 152 (1970) 39--60.

\bibitem[Qu]{quinn} Quinn, Frank. Ends of maps. III. Dimensions $4$ and $5$. J. Differential Geom. 17 (1982), no. 3, 503--521. 

\bibitem[Sa]{salvatore} Salvatore, Paolo. Configuration spaces with summable labels. Cohomological methods in homotopy theory (Bellaterra, 1998), 375--395, Progr. Math., 196, Birkh\"auser, Basel, 2001.

\bibitem[SW]{sw} Salvatore, Paolo; Wahl, Nathalie. Framed discs operads and Batalin--Vilkovisky algebras. Q. J. Math. 54 (2003), no. 2, 213--231. 

\bibitem[Se1]{segal} Segal, Graeme. Configuration-spaces and iterated loop-spaces. Invent. Math. 21 (1973), 213--221.

\bibitem[Se2]{segalconformal} Segal, Graeme. The definition of conformal field theory. Topology, geometry and quantum field theory, 421--577, London Math. Soc. Lecture Note Ser., 308, Cambridge Univ. Press, Cambridge, 2004.

\bibitem[Se3]{segallocal} Segal, Graeme. Locality of holomorphic bundles, and locality in quantum field theory. The many facets of geometry, 164--176, Oxford Univ. Press, Oxford, 2010. 

\bibitem[Vo]{voronov} Voronov, Alexander. The Swiss-cheese operad. Homotopy invariant algebraic structures (Baltimore, MD, 1998), 365--373, Contemp. Math., 239, Amer. Math. Soc., Providence, RI, 1999. 

\bibitem[We]{weiss} Weiss, Michael. Embeddings from the point of view of immersion theory. I. Geom. Topol. 3 (1999), 67--101.

\end{thebibliography}
\end{document}